\newtheorem{theorem}{Theorem}
\newtheorem{theorem*}{Theorem}
\newtheorem{proposition}[theorem]{Proposition}
\newtheorem{definition}[theorem]{Definition}
\newtheorem{lemma}[theorem]{Lemma}
\newtheorem{remark}[theorem]{Remark}
\DeclareMathOperator{\R}{\mathbb{R}}
\begin{document}
\title{Time domain boundary elements for elastodynamic contact}
\author{Alessandra Aimi\thanks{Department of Mathematical, Physical and Computer Sciences,
University of Parma, Parco Area delle Scienze, 53/A, 43124, Parma,
Italy, email: alessandra.aimi@unipr.it, giulia.dicredico@unipr.it \newline Members of the INDAM-GNCS Research Group, Italy.} \and Giulia Di Credico${}^\ast$\thanks{Engineering Mathematics, University of Innsbruck, Innsbruck, Austria,  email: heiko.gimperlein@uibk.ac.at} \and Heiko Gimperlein${}^\dagger$}
\date{}

\providecommand{\keywords}[1]{{\textit{Key words:}} #1}

\maketitle \vskip 0.5cm
\begin{abstract}
\noindent This article proposes a boundary element method for the dynamic contact between a linearly elastic body and a rigid obstacle. The Signorini contact problem is formulated as a variational inequality for the Poincar\'{e}-Steklov operator for the elastodynamic equations on the boundary, which is solved in a mixed formulation using boundary elements in the time domain. We obtain an a priori estimate for the resulting Galerkin approximations. Numerical experiments confirm the stability and convergence of the proposed method for the contact problem in flat and curved two-dimensional geometries, as well as for moving obstacles. 

\end{abstract}
\keywords{boundary element methods; space-time methods; Signorini contact problem; variational inequality; elastodynamics.}

\section{Introduction}
\label{intro}

Contact problems between elastic bodies play a crucial role in applications {ranging} from fracture dynamics to rolling car tires \cite{wrig}. As the contact takes place at the interface of two materials, boundary elements as well as the coupling of finite and boundary elements lead to efficient and much-studied tools for numerical simulations for time-independent situations as well as for strongly dissipative materials \cite{gwinsteph, ency}. Mathematically, the analysis of the resulting variational inequalities has been well understood for elliptic and parabolic problems.\\
In spite of the practical relevance of dynamic contact and the challenges of simulations based on finite elements, rigorous boundary element methods are only starting to be developed. We refer to  \cite{bur2,chouly0, choulydyn2,doyen2,fractional, sten2,hauret,cont2,cont1,chouly3} as examples from the extensive mathematical finite element literature. Indicating the challenges, without dissipation only stability or energy conservation, but not the convergence of finite element methods, is known.\\
The computational difficulties relate to the mathematical challenges of the time-dependent contact problem \cite{eckbook}. Even the existence of weak solutions is known only for viscoelastic materials or modified contact conditions, such as in \cite{cocou}. Recent progress towards the numerical analysis problems with unilateral constraints and without dissipative terms  was made in \cite{contact}, for the simplified problem of a scalar wave equation which arises in the physical limit when transversal stresses can be neglected \cite{lure}. Unlike for elastic contact, in this scalar problem refined information about the Poincar\'{e}-Steklov operator is available for the analysis \cite{cooper, lebeau}. \\
Building on \cite{contact} and \cite{banz}, in this work we propose a time domain boundary element method for the elastodynamic Signorini contact problem corresponding to contact with an impenetrable obstacle.\\

To be specific, we consider the dynamics of a linear elastic body with homogeneous mass density $\varrho$ in a {bounded} domain $\Omega \subset \mathbb{R}^d$, $d=2,3$. Its dynamics is  described by the Navier-Lam\'{e} equations
\begin{equation}\label{navierlame}
\nabla \cdot \sigma(\textbf{u})-\varrho\ddot{\textbf{u}}=\textbf{0} 
\end{equation}
for times $t \in [0,T]$. Here $\textbf{u}$ is the unknown displacement vector and the upper dots denote time derivatives. The Cauchy stress tensor $\sigma(\textbf{u})$ is defined as follows {(see e.g. \cite{Eringen1975})}
$$\sigma(\textbf{u})=\varrho (c_{\mathtt{P}}^2-2c_{\mathtt{S}}^2)\,(\nabla \cdot \textbf{u})I+\varrho c_{\mathtt{S}}^2\,(\nabla \textbf{u}+\nabla \textbf{u}^\top)\,,$$
with pressure and shear velocities $c_{\mathtt{P}}$, respectively $c_{\mathtt{S}}$. From $\sigma(\textbf{u})$ we obtain the elastic traction \begin{equation}
\label{trazione}
\textbf{p}=\sigma\left(\textbf{u}\right)_{\vert_{\Gamma}} \bf n\,\end{equation}
on the boundary $\Gamma$ of $\Omega$. Here $\textbf{n}$ denotes the outward-pointing unit normal vector to $\Gamma$. 

Indicating with the subscript $\perp$ the normal component of a vector, we consider the non-penetration of an obstacle using the following (non-linear) Signorini boundary conditions for 
${ u}_\perp, p_\perp$ on a given contact boundary $\Gamma_C \subset \Gamma$:
\begin{equation}\label{contactbc}\begin{cases}
{ u}_\perp \geq g\ ,\, {p}_\perp\geq {f}_\perp\ ,\\ {u}_\perp > g\ \Longrightarrow {p}_\perp = {f}_\perp\ .
\end{cases}\end{equation}
The mechanical interpretation of the gap function $g$ and the prescribed force ${\bf f}$ are discussed in Section \ref{mechsetup} below. As described there, the Signorini conditions are complemented by boundary conditions prescribing the displacement, respectively the traction, on the remainder of $\Gamma$. \\
In this article we reduce the contact problem to a variational inequality on the boundary $\Gamma$, using the elastodynamic Poincar\'{e}-Steklov operator. 
 Because the contact area and forces are often relevant in applications, we reformulate this variational inequality in an equivalent mixed formulation, which we discretize with a time domain Galerkin boundary element method. The mixed formulation is solved with an Uzawa
algorithm to obtain the displacement and the contact forces.\\
The detailed algebraic formulation and implementation of an energetic space-time boundary element method are presented in this article. As a main theoretical result we obtain an a priori estimate for the numerical error in Theorem \ref{apriori}, as well as the convergence of the proposed Uzawa algorithm. Given the analytical challenges described above, the theoretical results are subject to an assumption known for simplified situations \cite{contact}.\\
Numerical results confirm the stability and convergence of the proposed method in two dimensions. Both flat and curved contact boundaries are considered, as is the contact with time-dependent, moving obstacles.\\
Let us finally note that our approach relies on the recent advances in time domain boundary elements and coupled finite element / boundary element procedures for interface problems, where both space-time Galerkin and convolution quadrature methods have been of interest \cite{ajrt,aimi1,comput,falletta,gimperleinreview,jr,kager,sayas,schanz,steinbach}.
 More specifically, we refer
to \cite{AimiJCAM, ourpaper, Becache1993, Becache1994} for  works on the theoretical and numerical analysis of the elastodynamic boundary integral operators.\\
This article is structured as follows: in Section \ref{sec2} the differential model problem is set up together with its reformulation in terms of space-time boundary integral operators; in Section \ref{sec 3} the adopted discretization is introduced, while in Section \ref{sec4} the theoretical analysis is conducted. Algorithmic details of the overall implementation are described in Section \ref{sec:psh} and extensive numerical simulations are presented and discussed in Section \ref{sec;numres}. Conclusions are briefly drawn in Section \ref{sec7}.

\section{Dynamic contact and boundary integral formulations} \label{sec2}
\subsection{Differential problem formulation} \label{mechsetup}

\begin{figure}[h!]
\centering
{\includegraphics[scale=0.7]{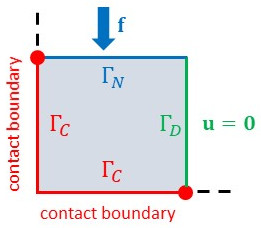}}
\caption{Schematic depiction of contact problem.}
\label{Figure:setup}
\end{figure}

In this subsection we introduce the equations which govern the contact problems considered in this article. As illustrated in 
Figure \ref{Figure:setup} for $d=2$, we consider the time dependent deformation of a linearly elastic body, described by the 
Navier-Lam\'{e} equations \eqref{navierlame} for the displacement $\textbf{u}$ in a {bounded} domain $\Omega \subset \mathbb{R}^d$, $d=2,3$. 
Starting from the reference configuration, $\textbf{u} = \textbf{0}$ for times $t\leq 0$, the dynamics of the body is due to surface 
forces applied on a subset $\Gamma_N$ of its boundary $\Gamma$, where the traction $\textbf{p} =\mathbf{f}$ is prescribed.   

The unilateral Signorini contact problem here considered describes the physical impossibility of the body to penetrate an adjacent
rigid and frictionless surface. In Figure \ref{Figure:setup} the impenetrable obstacle is given by the positive $x$- and $y$-axes.
Nonpenetration of the obstacle leads to the contact condition \eqref{contactbc} on the part $\Gamma_C \subset \Gamma$ of the boundary where contact may occur. 
More precisely, from \eqref{contactbc} contact takes place when the normal displacement satisfies $u_\perp = g$, where $g$ describes the gap between the reference configuration and the obstacle.  
Contact is avoided when $u_\perp > g$, and then only the applied surface forces ${p}_\perp = {f}_\perp$ act on $\Gamma_C$.

It will be convenient to denote by $\bar{\Gamma}_\Sigma$ the union $\bar{\Gamma}_N \cup \bar{\Gamma}_C$ of the traction and contact boundaries. To complete the description of the problem, the body is fixed on the remaining subset $\Gamma_D = \Gamma \setminus \bar{\Gamma}_\Sigma$ of the boundary. This results in the boundary condition $\textbf{u} = \textbf{0}$ for the displacement on $\Gamma_D$. \\
Indicating with the subscript $\parallel$ the component of a vector tangential to $\Gamma$, we can summarize the governing equations as follows: 
\begin{subequations}  \label{prob:strong_formulation}
\begin{alignat}{2}
\nabla \cdot \sigma(\textbf{u})-\varrho\ddot{\textbf{u}}&=\textbf{0} & \quad & \text{in } [0,T]\times \Omega \label{strongformPDE}\\
	\textbf{u}&=\textbf{0} & \quad & \text{on } [0,T]\times \Gamma_D \\
  \textbf{p} &=\mathbf{f} & \quad & \text{on } [0,T]\times \Gamma_N \label{prob:strong_formulation_1}\\
   \textbf{p}_\parallel &=\textbf{f}_\parallel  & \quad & \text{on } [0,T]\times \Gamma_C 
   \label{prob:strong_formulation_2}
\end{alignat}
\end{subequations}
together with the contact conditions \eqref{contactbc} on $[0,T]\times \Gamma_C$ and homogeneous 
condition $\mathbf{u}\equiv \mathbf{0}$  
in $\Omega$ for $t\leq 0$. \\

\subsection{Boundary integral formulations: variational inequality and mixed formulations}
\label{sec:Boundaryintegralformulation}

Boundary integral formulations are well-known to lead to efficient numerical methods for time dependent contact problems (see \cite{contact} in the context of acoustic wave equation), as the dynamics in $\Omega$ can be fully described by an integral equation on $[0,T]\times\Gamma$.\\
For the remainder of this article we assume that $\Omega$ is a bounded polygonal or polyhedral Lipschitz domain. To derive the corresponding formulations for the elastodynamic equations \eqref{navierlame}, we introduce the time-dependent single layer and double layer potential operators $V$ and $K$, that act onto the generic fields $\pmb{ \phi}$ and $\pmb{\psi}$, for $(t,{\bf x}) \in [0,T] \times \Omega$, as follows:
$$[V \pmb{\phi}](\bold{x},t) = \int_0^t\int_\Gamma G(t,\tau;\bold{x},\bold{y})\: {\pmb{\phi}}(\tau,\bold{y}) d\Gamma_{\bold{y}} d\tau,\:\:
[K \pmb{\psi}](\bold{ x},t) =\int_0^t \int_\Gamma  \left[\sigma_{\bold{y}}\left(G\right)^\top (t,\tau;\bold{x},\bold{y})\bold{n}_{\bold{y}}\right] \pmb{ \psi}(\bold{y},\tau)  d\Gamma_{\bold{y}} d\tau,
$$
where $G$ is the fundamental solution to \eqref{navierlame}.
The subscript $\bold{y}$ applied to the stress tensor $\sigma$ denotes the variable for the application of the spatial derivative, while for the vector $\bold{n}$ it declares the point of $\Gamma$ where we are considering the normal direction. For $d=2$ the fundamental solution reads as
\begin{align}
G_{ij}(\textbf{x},\textbf{y};t,\tau):=& \dfrac{H[c_{\mathtt{P}}(t-\tau)-r]}{2\pi\varrho c_{\mathtt{P}}}\left\lbrace \dfrac{r_i r_j}{r^4}\dfrac{2 c^2_{\mathtt{P}}(t-\tau)^2-r^2}{\sqrt{c_{\mathtt{P}}^2(t-\tau)^2-r^2}}-\dfrac{\delta_{ij}}{r^2}\sqrt{c^2_{\mathtt{P}}(t-\tau)^2-r^2}\right\rbrace \label{fundamental solution}\\
-& \dfrac{H[c_{\mathtt{S}}(t-\tau)-r]}{2\pi\varrho c_{\mathtt{S}}}\left\lbrace \dfrac{r_i r_j}{r^4}\dfrac{2 c^2_{\mathtt{S}}(t-\tau)^2-r^2}{\sqrt{c_{\mathtt{S}}^2(t-\tau)^2-r^2}}-\dfrac{\delta_{ij}}{r^2}\dfrac{c^2_{\mathtt{S}}(t-\tau)^2}{\sqrt{c^2_{\mathtt{S}}(t-\tau)^2-r^2}}\right\rbrace,\quad i,j=1,2,\nonumber
\end{align}
being $H[\cdot]$ the Heaviside step function, $r_j$ and $r$ the $j$-th cartesian component and the euclidean norm of the vector $\textbf{r}=\textbf{x}-\textbf{y}$, respectively, and $\delta_{ij}$ the Kronecker delta. For $d=3$ the fundamental solution reads instead as
\begin{align*}
G_{ij}(\textbf{x},\textbf{y};t,\tau):=&  \frac{t-\tau}{4\pi\varrho r^2} \left(\frac{r_i r_j}{r^3} - \frac{\delta_{ij}}{r}\right)(H[c_{\mathtt{P}}(t-\tau)-r]-H[c_{\mathtt{S}}(t-\tau)-r]) \\ &+ \frac{r_i r_j}{4\pi\varrho r^{3}} \left(c_{\mathtt{P}}^{-2}\delta(c_{\mathtt{P}}(t-\tau)-r)-c_{\mathtt{S}}^{-2}\delta(c_{\mathtt{S}}(t-\tau)-r) \right) \\& + \frac{\delta_{ij}}{4\pi\varrho r c_{\mathtt{S}}^2} \delta(c_{\mathtt{S}}(t-\tau)-r), \qquad i,j=1,2,3,
\end{align*}
where $\delta (\cdot)$ is the Dirac delta distribution.\\

The unknown displacement $\textbf{u}$ can be expressed  by the representation formula 
\begin{equation}\label{representation formula}
\textbf{u}=V\textbf{p}-K\textbf{u},\quad \textrm{in} \; [0,T] \times \Omega.
\end{equation}
Letting $\textbf{x}\in\Omega\rightarrow \textbf{x}\in \Gamma$ in \eqref{representation formula}, we deduce the boundary integral equation in the standard notation
\begin{equation}\label{first_BIE}
\frac{1}{2}\textbf{u}=\mathcal{V}\textbf{p}-\mathcal{K}\textbf{u},\quad \textrm{in}\; [0,T] \times \Gamma,
\end{equation}
taking into account the free term $\frac{1}{2}\textbf{u}$ generated by the operator $K$ pursuant the limiting process.
Moreover, we need also to introduce the adjoint double layer operator $\mathcal{K}^\star$ and the hypersingular integral operator $\mathcal{W}$ for $(\textbf{x},t)\in \Gamma\times [0,T]$:
\begin{align*}
[\mathcal{K}^\star \pmb{\phi}](\bold{ x},t)& = \int_0^t\int_\Gamma \left[\sigma_{\bold{x}} \left(G\right)(t,\tau;\bold{x},\bold{y})\bold{n}_{\bold{x}}\right]  {\pmb{\phi}}(\tau,\bold{y}) d\Gamma_{\bold{y}}\ d\tau,\\
[\mathcal{W} \pmb{\psi}](\bold{ x},t) &=\int_0^t \int_\Gamma  \left[\sigma_\textbf{x}\left(\sigma_{\bold{y}}\left(G\right)^\top (t,\tau;\bold{x},\bold{y})\bold{n}_{\bold{y}}\right)\textbf{n}_{\textbf{x}}\right] \pmb{ \psi}(\bold{y},\tau)  d\Gamma_{\bold{y}}\ d\tau\,,
\end{align*}
involved in the classical boundary integral equation
\begin{equation}\label{second_BIE}
\frac{1}{2}\textbf{p}=\mathcal{K}^*\textbf{p}-\mathcal{W}\textbf{u},\quad \textrm{in}\; [0,T] \times \Gamma.
\end{equation}
The above recalled integral operators will be used, as detailed in Section \ref{subPS}, in the boundary integral problem we are going to solve, which is based on the Poincar\'e-Steklov operator $\mathcal{S}$, defined by
\begin{equation}\label{operator_S}
\mathcal{S} \left({\bf u}_{|_\Gamma}\right) := \sigma\left(\textbf{u}\right)_{|_\Gamma}  {\bf n}=\mathbf{p},
\end{equation}
where  ${\bf u}$ is a solution to the elastodynamic equations \eqref{navierlame} in $[0,T]\times\Omega$, given a Dirichlet datum ${\bf u}_{|_\Gamma}$, and the last equality is due to the definition of traction in \eqref{trazione}.\\ 
Basic properties of $\mathcal{S}$ are summarized in Theorem \ref{mappingProperties} below. Moreover, let us remark that this operator
can be expressed by two equivalent forms, matching properly the previously introduced integral operators, namely: the non-symmetric formulation
\begin{equation}\label{definition_S}
\mathcal{S}=\mathcal{V}^{-1}\left(\mathcal{K}+\frac{1}{2}\right)
\end{equation}
and the symmetric formulation
\begin{equation}\label{definition_S_symm}
\mathcal{S}=\left(\mathcal{K}^*+\frac{1}{2}\right)\mathcal{V}^{-1}\left(\mathcal{K}+\frac{1}{2}\right)-\mathcal{W} \,.
\end{equation}

In the following, to simplify the notation the subscript $\vert_\Gamma$ in the argument of the operator {$\cal S$} will be omitted, whenever clear from the context.\\

Now, let us denote $L^2$ space-time scalar products useful in the sequel by
\begin{align}
&\langle \textbf{u},\textbf{v} \rangle_{0,\Gamma,(0,T]}:=\int_0^T \int_{\Gamma}\textbf{u}(\textbf{x},t)\cdot\textbf{v}(\textbf{x},t)\: d\Gamma_{\textbf{x}}\:dt,\label{0_product}\\
&\langle \textbf{u},\textbf{v} \rangle_{\sigma,\Gamma,\mathbb{R}^+}:=\int_0^\infty e^{-2\sigma t}\int_{\Gamma}\textbf{u}(\textbf{x},t)\cdot\textbf{v}(\textbf{x},t)\: d\Gamma_{\textbf{x}}\:dt,\label{sigma_product}  
\end{align}
where $\sigma>0$, changing the subscript $\Gamma$ whenever dealing with boundary subsets and including in the above definitions the simpler case of scalar functions.
For precise statements and the error analysis we also require space-time Sobolev spaces $H^r(\mathcal{I},\tilde{H}^{s}({\Gamma'}))$ on subsets ${\Gamma'} \subset \Gamma$ and for the time intervals $\mathcal{I}=[0,T],\mathbb{R}^+$, which are  introduced in Appendix A.\\

Given $g \in H^{1/2}([0,T],H^{1/2}(\Gamma_C))$ and $\textbf{f}\in H^{1/2}([0,T],\tilde{H}^{-1/2}(\Gamma_\Sigma))^d$, the precise functional analytic formulation of the contact problem \eqref{contactbc}-\eqref{prob:strong_formulation} as a variational inequality in terms of $\mathcal{S}$ reads:\\

\noindent \textit{find} $\textbf{u} \in \mathcal{C}:=\left\lbrace \textbf{v}\in H^{1/2}([0,T],\tilde{H}^{1/2}(\textcolor{black}{\Gamma_\Sigma}))^d: v_\perp \geq g \text{~a.e.~on~} [0,T]\times \Gamma_C \right\rbrace$ \textit{such that}
\begin{align} \label{eq:VarIneq}
\langle {\mathcal{S} {\bf u}}, {\bf v}-{\bf u}\rangle_{0,\Gamma_\Sigma,(0,T]} \geq \langle {\bf f}, {\bf v}-{\bf u}\rangle_{0,\Gamma_\Sigma,(0,T]} \qquad \quad  \forall {\bf v}  \in \mathcal{C}.
\end{align}

For the following result, we recall that,
as typical for direct boundary element methods, the displacement $\mathbf{u}$ in the domain $[0,T]\times\Omega$ is recovered from its boundary trace $\mathbf{u}|_\Gamma$ on $[0,T]\times\Gamma$ using the representation formula \eqref{representation formula}.

\begin{proposition}
The variational inequality \eqref{eq:VarIneq} for the displacement $\mathbf{u}|_\Gamma$ on $[0,T]\times\Gamma$ is equivalent to the contact problem \eqref{contactbc}-\eqref{prob:strong_formulation} for the solution $\mathbf{u}$ in $[0,T]\times\Omega$.
\end{proposition}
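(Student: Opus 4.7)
The plan is to establish the equivalence by proving the two implications, using the definition $\mathcal{S}\mathbf{u}|_\Gamma = \mathbf{p}$ of the Poincar\'e-Steklov operator. Both directions reduce to manipulating the boundary data, since the PDE \eqref{strongformPDE} in the interior together with the homogeneous initial condition is recovered automatically from $\mathbf{u}|_\Gamma$ via the representation formula \eqref{representation formula} (this is built into the definition of $\mathcal{S}$), while the Dirichlet condition on $\Gamma_D$ is encoded in the ambient space $\tilde{H}^{1/2}(\Gamma_\Sigma)$, whose functions vanish on $\Gamma_D$ by definition.

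For the forward direction, assume $\mathbf{u}$ solves the strong contact problem \eqref{contactbc}-\eqref{prob:strong_formulation}, set $\mathbf{p}=\mathcal{S}\mathbf{u}$, and let $\mathbf{v}\in\mathcal{C}$. Since $\mathbf{p}=\mathbf{f}$ on $\Gamma_N$ by \eqref{prob:strong_formulation_1} and $\mathbf{p}_\parallel=\mathbf{f}_\parallel$ on $\Gamma_C$ by \eqref{prob:strong_formulation_2}, only the normal component on $\Gamma_C$ survives in $\langle \mathcal{S}\mathbf{u}-\mathbf{f},\mathbf{v}-\mathbf{u}\rangle_{0,\Gamma_\Sigma,(0,T]}$. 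I would then split $v_\perp-u_\perp=(v_\perp-g)-(u_\perp-g)$ and use that $(p_\perp-f_\perp)(v_\perp-g)\geq 0$ pointwise because both factors are nonnegative by \eqref{contactbc} and the definition of $\mathcal{C}$, while $(p_\perp-f_\perp)(u_\perp-g)=0$ almost everywhere by the complementarity part of \eqref{contactbc}. Integrating in space-time yields \eqref{eq:VarIneq}.

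For the reverse direction, let $\mathbf{u}\in\mathcal{C}$ satisfy \eqref{eq:VarIneq}. First I would use admissible perturbations $\mathbf{v}=\mathbf{u}\pm\mathbf{w}$ with $\mathbf{w}$ supported in $\Gamma_N$, or with $w_\perp\equiv 0$ on $\Gamma_C$: such $\mathbf{v}$ lie in $\mathcal{C}$ because $w_\perp=0$ on $\Gamma_C$ in both cases, so the sign reversal forces equalities that read $\mathbf{p}=\mathbf{f}$ on $\Gamma_N$ and $\mathbf{p}_\parallel=\mathbf{f}_\parallel$ on $\Gamma_C$, recovering \eqref{prob:strong_formulation_1}-\eqref{prob:strong_formulation_2}. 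With these identifications the inequality reduces to $\langle p_\perp-f_\perp,\,v_\perp-u_\perp\rangle_{0,\Gamma_C,(0,T]}\geq 0$ for every admissible $v_\perp\geq g$. Testing with $v_\perp=u_\perp+\phi$, $\phi\geq 0$, gives $p_\perp\geq f_\perp$, and testing with $v_\perp=g$ gives $\langle p_\perp-f_\perp,\,u_\perp-g\rangle\leq 0$; since the integrand is nonnegative, it must vanish almost everywhere, which is the complementarity in \eqref{contactbc}.

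The main obstacle I anticipate is purely technical: the perturbations $\mathbf{w}$ must lie in $H^{1/2}([0,T],\tilde{H}^{1/2}(\Gamma_\Sigma))^d$ with the normal- and subdomain-support constraints, and the dualities involving $\mathcal{S}\mathbf{u}\in \tilde{H}^{-1/2}$ must be interpreted through the $\tilde{H}^{-1/2}$--$\tilde{H}^{1/2}$ pairing, not as pointwise integrals. I expect this to be handled by a density argument with smooth, compactly supported test functions, together with the mapping properties of $\mathcal{S}$ summarized in Theorem~\ref{mappingProperties}, extending the pointwise arguments above to the distributional setting in a routine way.
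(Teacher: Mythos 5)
Your proposal is correct and follows essentially the same route as the paper's proof: the forward direction uses the splitting $v_\perp-u_\perp=(v_\perp-g)-(u_\perp-g)$ together with the sign and complementarity conditions, and the reverse direction uses the same perturbations $\mathbf{v}=\mathbf{u}\pm\mathbf{w}$ with $w_\perp=0$ on $\Gamma_C$, then $w_\perp\geq 0$, then $v_\perp=g$ to recover the Neumann, sign, and complementarity conditions in turn. The technical caveats you raise about interpreting the pairings distributionally are reasonable but are not addressed in the paper either, which works formally with the $L^2$ space-time products.
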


\begin{proof}
First, we show that the boundary trace $\mathbf{u}|_\Gamma$ of the solution $\mathbf{u}$ to \eqref{contactbc}-\eqref{prob:strong_formulation} satisfies \eqref{eq:VarIneq}.  Note that the Signorini boundary condition \eqref{contactbc} on $\Gamma_C$ can be  expressed in terms of $\mathcal{S}$ using \eqref{operator_S}:
\begin{equation}\label{scontactbc}\begin{cases}
{ u}_\perp \geq g\ ,\, \mathcal{S}(\mathbf{u}|_\Gamma)_\perp\geq {f}_\perp\ ,\\ {u}_\perp > g\ \Longrightarrow \mathcal{S}(\mathbf{u}|_\Gamma)_\perp = {f}_\perp\ .
\end{cases}\end{equation}
Equivalently, it is written as
\begin{equation}\label{scontactbc2}
{ u}_\perp \geq g\ ,\ \mathcal{S}(\mathbf{u}|_\Gamma)_\perp\geq {f}_\perp\ , \ ({u}_\perp - g)(\mathcal{S}(\mathbf{u}|_\Gamma)_\perp - f_\perp) = 0\ .\end{equation}
Due to \eqref{prob:strong_formulation_2}, the parallel component on $\Gamma_C$ satisfies $$\mathcal{S}(\mathbf{u}|_\Gamma)_\parallel = f_\parallel.$$
Further, from \eqref{prob:strong_formulation_1}, the Neumann boundary conditions on $\Gamma_N$ are equivalent to
$$\mathcal{S}(\mathbf{u}|_\Gamma) = \mathbf{f}.$$
We therefore find for any $\mathbf{v} \in \mathcal{C}$
\begin{align*}\langle\mathcal{S}(\mathbf{u}|_\Gamma) - \mathbf{f},\mathbf{v}-\mathbf{u}|_\Gamma\rangle_{0,\Gamma_\Sigma,(0,T]} &= \langle\mathcal{S}(\mathbf{u}|_\Gamma)_\parallel - f_\parallel,(\mathbf{v}-\mathbf{u}|_\Gamma)_\parallel\rangle_{0,\Gamma_\Sigma,(0,T]} + \langle\mathcal{S}(\mathbf{u}|_\Gamma)_\perp - f_\perp,(\mathbf{v}-\mathbf{u}|_\Gamma)_\perp\rangle_{0,\Gamma_\Sigma,(0,T]}\\
& = 0 + \langle\mathcal{S}(\mathbf{u}|_\Gamma)_\perp - f_\perp,v_\perp-g\rangle_{0,\Gamma_\Sigma,(0,T]}-\langle\mathcal{S}(\mathbf{u}|_\Gamma)_\perp - {f}_\perp,u_\perp-g\rangle_{0,\Gamma_\Sigma,(0,T]}\\
& = \langle\mathcal{S}(\mathbf{u}|_\Gamma)_\perp - f_\perp,v_\perp-g\rangle_{0,\Gamma_\Sigma,(0,T]} + 0\\
&\geq 0.
\end{align*}
Equivalently,
$$\langle\mathcal{S}(\mathbf{u}|_\Gamma),\mathbf{v}-\mathbf{u}\rangle_{0,\Gamma_\Sigma,(0,T]} \geq \langle\mathbf{f},\mathbf{v}-\mathbf{u}\rangle_{0,\Gamma_\Sigma,(0,T]},$$
and \eqref{eq:VarIneq} follows.\\

To show that, conversely, a sufficiently smooth solution ${\bf u}_{|_\Gamma}$ of \eqref{eq:VarIneq} leads to a solution ${\bf u}$ of the contact problem \eqref{contactbc}-\eqref{prob:strong_formulation},
we first recall that the representation formula \eqref{representation formula} recovers the solution $\mathbf{u}$ to \eqref{strongformPDE} in the domain $[0,T]\times \Omega$ from its restriction to $[0,T]\times \Gamma$. It remains to show that $\mathbf{u}$ satisfies the boundary conditions on $\Gamma$. Because $\textbf{u} \in \mathcal{C}\subset H^{1/2}([0,T],\tilde{H}^{1/2}(\Gamma_\Sigma))^d$, the homogeneous Dirichlet condition is satisfied on $\Gamma_D$. 

We now choose $\mathbf{v} = \mathbf{u}|_\Gamma + \mathbf{w} \in \mathcal{C}$, for $\mathbf{w}\in H^{1/2}([0,T],\tilde{H}^{1/2}({\Gamma_\Sigma}))^d$ with $w_\perp =0$ a.e.~on $[0,T]\times \Gamma_C$. Then \eqref{eq:VarIneq} implies
$$\langle\mathcal{S}(\mathbf{u}|_\Gamma),\mathbf{w}\rangle_{0,\Gamma_\Sigma,(0,T]} \geq \langle\mathbf{f},\mathbf{w}\rangle_{0,\Gamma_\Sigma,(0,T]}.$$
Similarly, $\mathbf{v} = \mathbf{u}|_\Gamma - \mathbf{w} \in \mathcal{C}$ and therefore also
$$\langle\mathcal{S}(\mathbf{u}|_\Gamma),\mathbf{w}\rangle_{0,\Gamma_\Sigma,(0,T]} \leq \langle\mathbf{f},\mathbf{w}\rangle_{0,\Gamma_\Sigma,(0,T]}.$$
We conclude
$$\langle\mathcal{S}(\mathbf{u}|_\Gamma),\mathbf{w}\rangle_{0,\Gamma_\Sigma,(0,T]} = \langle\mathbf{f},\mathbf{w}\rangle_{0,\Gamma_\Sigma,(0,T]}. $$
As this holds for all $\mathbf{w}\in H^{1/2}([0,T],\tilde{H}^{1/2}({\Gamma_\Sigma}))^d$ with $w_\perp =0$ a.e.~on $[0,T]\times \Gamma_C$, we conclude $\mathbf{p}= \mathcal{S}(\mathbf{u}|_\Gamma)=\mathbf{f}$ on $\Gamma_N$ and $p_\parallel= \mathcal{S}(\mathbf{u}|_\Gamma)_\parallel=f_\parallel$ on $\Gamma_C$.

It remains to show the Signorini conditions on $\Gamma_C$. For this we note that $u_\perp\geq g$ on $\Gamma_C$ because $\mathbf{u}|_\Gamma \in \mathcal{C}$. We now choose $\mathbf{v} = \mathbf{u}|_\Gamma + \mathbf{w} \in \mathcal{C}$ with $w_\perp \geq 0$ on $\Gamma_C$. As we have already concluded $\mathcal{S}(\mathbf{u}|_\Gamma)_\parallel=f_\parallel$, we  obtain 
$$0\geq \langle-\mathcal{S}(\mathbf{u}|_\Gamma)+\mathbf{f},\mathbf{w}\rangle_{0,\Gamma_\Sigma,(0,T]} = \langle-\mathcal{S}(\mathbf{u}|_\Gamma)_\perp+f_\perp,w_\perp\rangle_{0,\Gamma_\Sigma,(0,T]}.$$
Therefore the boundary condition $\mathcal{S}(\mathbf{u}|_\Gamma)_\perp\geq f_\perp$ is satisfied almost everywhere on $\Gamma_C$. 

Finally, for every $\mathbf{v}\in \mathcal{C}$ with $\mathbf{v}|_{\Gamma_C} = g\mathbf{n}$,  we similarly note  
\begin{equation}\label{eq:aux}
0\geq \langle-\mathcal{S}(\mathbf{u}|_\Gamma)_\perp+f_\perp,g-u_\perp\rangle_{0,\Gamma_\Sigma,(0,T]}.  \end{equation}
In the previous step we saw that $0\geq -\mathcal{S}(\mathbf{u}|_\Gamma)_\perp+f_\perp$ almost everywhere, while $0\geq g-u_\perp$ for $\mathbf{u}|_\Gamma  \in \mathcal{C}$. Hence also $0\leq (-\mathcal{S}(\mathbf{u}|_\Gamma)_\perp+f_\perp)(g-u_\perp)$ almost everywhere. Together with \eqref{eq:aux} we conclude that $$0= (-\mathcal{S}(\mathbf{u}|_\Gamma)_\perp+f_\perp)(g-u_\perp)$$
almost everywhere on $\Gamma_C$, i.e. the equality in \eqref{scontactbc2}.

Therefore, $\mathbf{u}$ satisfies the boundary conditions of the contact problem \eqref{contactbc}-\eqref{prob:strong_formulation}.
\end{proof}

In spite of the interest in the dynamic contact problem \eqref{contactbc}-\eqref{prob:strong_formulation}, from a rigorous mathematical perspective the existence of solutions is only known for certain materials with dissipation or for not perfectly rigid, but dissipative obstacles, see e.g.~\cite{cocou}. Without dissipation the existence of a solution has been proven for simplified problems involving the scalar wave equation in special geometries \cite{cooper, lebeau}. In these cases refined information about the coercivity of the Poincar\'{e}-Steklov operator is available, as discussed in \cite{contact}.\\

For the numerical approximation we will consider a mixed formulation. Let
\begin{align}
M^+:=\left\lbrace {\mu} \in H^{1/2}([0,T],\tilde{H}^{-1/2}(\Gamma_C)): \left\langle {\mu},{w}\right\rangle_{0,\Gamma_C,(0,T]} \geq 0, \ \forall \ 0\leq { w} \in H^{-\frac{1}{2}}([0,T],\tilde{H}^{1/2}(\Gamma_\Sigma)) \right\rbrace
\end{align}
be the set of admissible Lagrange multipliers, in which the representative 
\begin{equation}
{\lambda}= (\mathcal{S}{\bf u}- {\bf f})_{\perp} \in M^+
\end{equation}
is sought. Note that by definition ${ \lambda} \geq 0$ and $\lambda = 0$ outside $\Gamma_C$. Then, the mixed formulation reads:\\

\textit{find} $(\textbf{u},{\lambda}) \in H^{1/2}([0,T],\tilde{H}^{1/2}(\Gamma_\Sigma))^d \times M^+$ \textit{such that} 
\begin{subequations} \label{eq:MixedProblem}
 \begin{alignat}{2}
\left\langle \mathcal{S}\textbf{u},\textbf{v} \right\rangle_{0,\Gamma_\Sigma,(0,T]} - \left\langle \lambda,v_\perp \right\rangle_{0,\Gamma_C,(0,T]} &= \left\langle \textbf{f},\textbf{v} \right\rangle_{0,\Gamma_\Sigma,(0,T]} &\quad &\forall \textbf{v}\in H^{1/2}([0,T],\tilde{H}^{1/2}(\Gamma_\Sigma))^d \label{eq:WeakMixedVarEq}\\
\left\langle {\mu} -{\lambda},u_\perp \right\rangle_{0,\Gamma_C,(0,T]} & \geq \left\langle g,\mu-\lambda \right\rangle_{0,\Gamma_C,(0,T]} &\quad &\forall {\mu} \in M^+ .\label{eq:ContContactConstraints}
\end{alignat}
\end{subequations}
\begin{theorem}
The mixed formulation \eqref{eq:MixedProblem}  is equivalent to the variational inequality \eqref{eq:VarIneq}.
\end{theorem}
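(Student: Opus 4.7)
The plan is to establish the two implications independently, using the interpretation that the Lagrange multiplier $\lambda$ represents the normal contact pressure $(\mathcal{S}\mathbf{u}-\mathbf{f})_\perp$ on $\Gamma_C$.

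For the direction \eqref{eq:MixedProblem} $\Rightarrow$ \eqref{eq:VarIneq}, I would proceed in three sub-steps. First, to show $\mathbf{u}\in\mathcal{C}$, insert $\mu=\lambda+\mu_0$ into \eqref{eq:ContContactConstraints} for arbitrary $\mu_0\in M^+$; this yields $\langle \mu_0,u_\perp-g\rangle_{0,\Gamma_C,(0,T]}\geq 0$ for all such $\mu_0$, which forces $u_\perp\geq g$ almost everywhere on $[0,T]\times\Gamma_C$. Second, choosing in turn $\mu=0$ and $\mu=2\lambda$ in \eqref{eq:ContContactConstraints} produces the complementarity identity $\langle \lambda,u_\perp-g\rangle_{0,\Gamma_C,(0,T]}=0$. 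Third, for any $\mathbf{v}\in\mathcal{C}$, testing \eqref{eq:WeakMixedVarEq} with $\mathbf{v}-\mathbf{u}$ gives
\begin{equation*}
\langle\mathcal{S}\mathbf{u},\mathbf{v}-\mathbf{u}\rangle_{0,\Gamma_\Sigma,(0,T]}-\langle\mathbf{f},\mathbf{v}-\mathbf{u}\rangle_{0,\Gamma_\Sigma,(0,T]}=\langle \lambda,v_\perp-u_\perp\rangle_{0,\Gamma_C,(0,T]}=\langle \lambda,v_\perp-g\rangle_{0,\Gamma_C,(0,T]}\geq 0,
\end{equation*}
where the last inequality uses $\lambda\in M^+$ together with $v_\perp-g\geq 0$, which establishes \eqref{eq:VarIneq}.

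For the converse, given a solution $\mathbf{u}$ of \eqref{eq:VarIneq}, define $\lambda:=(\mathcal{S}\mathbf{u}-\mathbf{f})_\perp$, understood as supported on $\Gamma_C$. The preceding Proposition (re-using the arguments of its proof) supplies the identities $\mathcal{S}\mathbf{u}=\mathbf{f}$ on $\Gamma_N$ and $\mathcal{S}(\mathbf{u})_\parallel=f_\parallel$ on $\Gamma_C$, the sign condition $\mathcal{S}(\mathbf{u})_\perp\geq f_\perp$ on $\Gamma_C$, and the complementarity $(\mathcal{S}(\mathbf{u})_\perp-f_\perp)(u_\perp-g)=0$. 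The sign condition gives $\lambda\in M^+$. Splitting $\langle\mathcal{S}\mathbf{u}-\mathbf{f},\mathbf{v}\rangle_{0,\Gamma_\Sigma,(0,T]}$ into its parallel and perpendicular parts on $\Gamma_C$ and using that $\mathcal{S}\mathbf{u}-\mathbf{f}$ vanishes on $\Gamma_N$ and in the parallel direction on $\Gamma_C$ collapses the expression to $\langle \lambda,v_\perp\rangle_{0,\Gamma_C,(0,T]}$, yielding \eqref{eq:WeakMixedVarEq}. For \eqref{eq:ContContactConstraints}, the complementarity implies $\langle\lambda,u_\perp-g\rangle_{0,\Gamma_C,(0,T]}=0$, so that for any $\mu\in M^+$
\begin{equation*}
\langle\mu-\lambda,u_\perp\rangle_{0,\Gamma_C,(0,T]}-\langle g,\mu-\lambda\rangle_{0,\Gamma_C,(0,T]}=\langle\mu,u_\perp-g\rangle_{0,\Gamma_C,(0,T]}\geq 0.
\end{equation*}

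The main obstacle is not the algebraic bookkeeping but the functional-analytic justification of the pairings. The coupling $\langle\lambda,u_\perp-g\rangle$ pits a distribution of negative spatial order against a function of positive order (and analogously in time), so the $L^2$ notation of \eqref{0_product} must be interpreted as the canonical duality extension; moreover, picking out the normal and tangential components via specific test functions in $\mathcal{C}$ requires a density argument. This technicality already appeared implicitly in the previous Proposition through the "sufficiently smooth solution" caveat and should be handled analogously here.
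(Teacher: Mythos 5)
Your argument is correct and is precisely the standard equivalence between a variational inequality and its Lagrange-multiplier (mixed) reformulation: the cone/bipolar argument to recover $u_\perp\geq g$, the $\mu=0,\,2\lambda$ trick for complementarity, and the identification $\lambda=(\mathcal{S}\mathbf{u}-\mathbf{f})_\perp$ for the converse. The paper itself gives no details and simply defers to Theorem 14 of \cite{contact}, whose proof for the wave equation follows the same steps, so your proposal matches the intended argument, with the functional-analytic caveats you flag being exactly the ones the paper also leaves implicit.
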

\begin{proof}The proof is identical to the proof of Theorem 14 in \cite{contact} for the wave equation.
\end{proof}

\section{Discretization}\label{sec 3}

To solve 
the mixed formulation \eqref{eq:MixedProblem}, in a discretized form,  we  consider a uniform decomposition of the time interval $[0,T]$ with time step $\Delta t=\frac{T}{N_{\Delta t}}$, $N_{\Delta t}\in\mathbb{N}^{+}$, generated by the time instants $t_{\ell}=\ell\Delta t$, $\ell=0,\ldots,N_{\Delta t}$. We define the corresponding spaces
\begin{equation}\label{time_set}
\begin{array}{ll}
 V^{-1}_{\Delta t}=&\left\lbrace v_{\Delta t}\in L^2([0,T])
\: : \: {v_{\Delta t}}_{\vert_{[t_\ell,t_{\ell+1}]}}\in \mathcal{P}_0, \:\forall \ell=0,..., N_{\Delta t}-1 \right\rbrace ,\\[4pt]
V^0_{\Delta t}=&\left\lbrace r_{\Delta t}\in C^0([0,T])
\: : \: {r_{\Delta t}}_{\vert_{[t_\ell,t_{\ell+1}]}}\in \mathcal{P}_1, \:\forall \ell=0,..., N_{\Delta t}-1,\: v(0)=0  \right\rbrace,
\end{array}
\end{equation}
where $\mathcal{P}_s$, $s\geq 0$, is the space of the algebraic polynomials of degree s. For the space discretization with $d=2$, we introduce a boundary mesh constituted by a set of straight line segments $\mathcal{T}=\left\lbrace e_1,... ,e_M \right\rbrace$ such that $h_i:=length(e_i)\leqslant h$, $e_i\cap e_j=\emptyset$ if $i\neq j$ and $\cup_{i=1}^M \overline{e}_i=\overline{\Gamma}$ if $\Gamma$ is polygonal, or a suitably fine approximation of $\Gamma$ otherwise. For $d=3$, we assume that $\Gamma$ is triangulated by  $\mathcal{T}=\{e_1,\cdots,e_{M}\}$, with $h_i:=diam(e_i)\leqslant h$, $e_i\cap e_j=\emptyset$ if $i\neq j$ and, if $\overline{e_i}\cap \overline{e_j} \neq \emptyset$, the intersection is either an edge or a vertex of both triangles. We denote by $h$ the maximum of the $h_i$.
On $\mathcal{T}$  we consider the spaces of piecewise polynomial functions 
\begin{equation}\label{pol_space_4_discontinuous_f}
X^{-1}_{h,\Gamma}=\left\lbrace w_h\in L^2(\Gamma)\: : \: w_h\vert_{e_i}\in \mathcal{P}_s,\: e_i\in \mathcal{T}  \right\rbrace\subset H^{-1/2}(\Gamma), \quad X^{-1}_{h,\Gamma'} = X^{-1}_{h,\Gamma} \cap \widetilde{H}^{-1/2}(\Gamma'), 
\end{equation}
\begin{equation}\label{pol_space_4_continuous_f}
X^{0}_{h,\Gamma}=\left\lbrace w_h\in C^0(\Gamma)\: : \: w_h\vert_{e_i}\in \mathcal{P}_s,\: e_i\in \mathcal{T}  \right\rbrace \subset H^{1/2}(\Gamma), \quad X^{0}_{h,\Gamma'} = X^{0}_{h,\Gamma} \cap \widetilde{H}^{1/2}(\Gamma'),
\end{equation}
where $\Gamma' \subset \Gamma$. \\
The full discretization of \eqref{eq:MixedProblem} involves the subspace
\begin{align}
M^+_{H,\Delta T}:=\left\lbrace {\mu}_{H,\Delta T} \in X^{-1}_{H,\Gamma_C}\otimes V^{-1}_{\Delta T}: \mu_{H,\Delta T} \geq 0 \text{ on } \Gamma_C \times [0,T] \right\rbrace \subset M^+
\end{align}
and {a discretized version ${\cal S}_{h,\Delta t}$ of the Poincar\'e-Steklov operator. Two specific discretizations ${\cal S}_{h,\Delta t}$ will be discussed in detail in Section \ref{subPS}, corresponding to  
 the symmetric and the non-symmetric formulations \eqref{definition_S}, \eqref{definition_S_symm} of the operator
${\cal S}$. The full discretization of \eqref{eq:MixedProblem} then reads:}\\

\textit{find} $(\textbf{u}_{h,\Delta t},{\lambda}_{H,\Delta T}) \in (X^0_{h,\Gamma_\Sigma}\otimes V^0_{\Delta t})^d \times M^+_{H,\Delta T}$ \textit{such that}
\begin{subequations} \label{eq:MixedProblemh}
\begin{alignat}{2}
 &\left\langle \mathcal{S}_{h,\Delta t}\textbf{u}_{h,\Delta t},\textbf{v}_{h,\Delta t} \right\rangle_{0,\Gamma_\Sigma,(0,T]} -\left\langle {\lambda}_{H,\Delta T},{v}_{\perp,h,\Delta t} \right\rangle_{0,\Gamma_C,(0,T]} = \left\langle \textbf{f},\textbf{v}_{h,\Delta t} \right\rangle_{0,\Gamma_\Sigma,(0,T]} \quad\forall \textbf{v}_{h,\Delta t} \in (X^0_{h,\Gamma_\Sigma}\otimes V^0_{\Delta t})^d \label{eq:WeakMixedVarEqh} \\
&\left\langle {\mu}_{H,\Delta T} -{\lambda}_{H,\Delta T},{u}_{\perp,h,\Delta t} \right\rangle_{0,\Gamma_C,(0,T]}  \geq \left\langle g,\mu_{H,\Delta T}-\lambda_{H,\Delta T} \right\rangle_{0,\Gamma_C,(0,T]}  \quad\quad\quad\quad\quad\quad\:\forall {\mu}_{H,\Delta T} \in M^+_{H,\Delta T}. \label{eq:ContContactConstraintsh}
\end{alignat}
\end{subequations}

\begin{theorem}
The discretized mixed formulation \eqref{eq:MixedProblemh} admits a unique solution provided that ${\cal S}_{h,\Delta t}$ is positive definite. 
\end{theorem}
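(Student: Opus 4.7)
The plan is to reduce the discrete mixed problem to an equivalent finite-dimensional constrained variational inequality in the primal variable $\mathbf{u}_{h,\Delta t}$ alone, apply the Lions--Stampacchia theorem for existence and uniqueness of the displacement, and then recover the Lagrange multiplier from the residual. This mirrors the strategy of Theorem 14 of \cite{contact}, which was already invoked in the continuous setting in the preceding theorem.

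First I would introduce the discrete admissible convex set
$$\mathcal{C}_{h,\Delta t}:=\left\{\mathbf{v}_{h,\Delta t}\in (X^0_{h,\Gamma_\Sigma}\otimes V^0_{\Delta t})^d : \langle \mu_{H,\Delta T}, v_{\perp,h,\Delta t}-g\rangle_{0,\Gamma_C,(0,T]}\geq 0\ \ \forall \mu_{H,\Delta T}\in M^+_{H,\Delta T}\right\},$$
which is closed and convex, and nonempty as soon as the discrete displacement space is rich enough to contain a surrogate for $g\mathbf{n}$. Repeating the argument of Theorem 14 of \cite{contact} in the discrete setting, one checks that a pair $(\mathbf{u}_{h,\Delta t},\lambda_{H,\Delta T})\in (X^0_{h,\Gamma_\Sigma}\otimes V^0_{\Delta t})^d\times M^+_{H,\Delta T}$ solves \eqref{eq:MixedProblemh} if and only if $\mathbf{u}_{h,\Delta t}\in \mathcal{C}_{h,\Delta t}$ satisfies
$$\langle \mathcal{S}_{h,\Delta t}\mathbf{u}_{h,\Delta t}, \mathbf{v}_{h,\Delta t}-\mathbf{u}_{h,\Delta t}\rangle_{0,\Gamma_\Sigma,(0,T]}\geq \langle \mathbf{f}, \mathbf{v}_{h,\Delta t}-\mathbf{u}_{h,\Delta t}\rangle_{0,\Gamma_\Sigma,(0,T]}\quad \forall \mathbf{v}_{h,\Delta t}\in \mathcal{C}_{h,\Delta t},$$
and $\lambda_{H,\Delta T}$ is its associated multiplier.

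Since $\mathcal{S}_{h,\Delta t}$ is positive definite by assumption, the bilinear form $\langle \mathcal{S}_{h,\Delta t}\cdot,\cdot\rangle_{0,\Gamma_\Sigma,(0,T]}$ is coercive and (trivially) continuous on the finite-dimensional displacement space, so the Lions--Stampacchia theorem yields a unique $\mathbf{u}_{h,\Delta t}\in \mathcal{C}_{h,\Delta t}$ solving the reduced variational inequality. The multiplier $\lambda_{H,\Delta T}$ is then constructed from the linear functional $\mathbf{w}_{h,\Delta t}\mapsto \langle \mathcal{S}_{h,\Delta t}\mathbf{u}_{h,\Delta t}-\mathbf{f}, \mathbf{w}_{h,\Delta t}\rangle_{0,\Gamma_\Sigma,(0,T]}$: exactly as in the preceding proposition, this functional vanishes on test vectors with vanishing normal component on $\Gamma_C$ (reproducing the Neumann and tangential conditions) and acts as a nonnegative functional of $w_{\perp,h,\Delta t}|_{\Gamma_C}$, so Riesz representation in the finite-dimensional space $M^+_{H,\Delta T}$ supplies $\lambda_{H,\Delta T}\geq 0$. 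The complementarity identity needed in \eqref{eq:ContContactConstraintsh} is obtained by testing that inequality with $\mu_{H,\Delta T}=0$ and with $\mu_{H,\Delta T}=2\lambda_{H,\Delta T}$.

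The main obstacle will be the uniqueness of $\lambda_{H,\Delta T}$. This reduces to the nondegeneracy of the pairing $(\mu_{H,\Delta T},\mathbf{v}_{h,\Delta t})\mapsto \langle \mu_{H,\Delta T}, v_{\perp,h,\Delta t}\rangle_{0,\Gamma_C,(0,T]}$ between $M^+_{H,\Delta T}$ and $(X^0_{h,\Gamma_\Sigma}\otimes V^0_{\Delta t})^d$, i.e., to a discrete LBB/inf-sup condition for the chosen pair of spaces, typically ensured in practice by compatible mesh ratios $H\gtrsim h$, $\Delta T\gtrsim \Delta t$. The statement is presumably to be read with this compatibility implicit in the choice of discretization; positive definiteness of $\mathcal{S}_{h,\Delta t}$ is what drives coercivity on the primal side, while the multiplier step is a routine translation of the continuous saddle-point argument into the finite-dimensional setting, analogous to the treatment in \cite{banz,contact}.
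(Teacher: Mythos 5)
Your argument is correct and coincides with the paper's approach: the paper's entire proof is the single remark that existence and uniqueness follow ``from standard results for saddle point problems,'' and what you have written out is precisely that standard argument (equivalence with a constrained variational inequality over $\mathcal{C}_{h,\Delta t}$, Lions--Stampacchia via the positive definiteness of $\mathcal{S}_{h,\Delta t}$, recovery of the multiplier from the residual). You are also right to flag that existence and uniqueness of the multiplier $\lambda_{H,\Delta T}$ additionally require the discrete inf-sup compatibility of Theorem \ref{discInfSup}, a hypothesis that the theorem statement and the paper's one-line proof leave implicit.
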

\begin{proof}
As ${\cal S}_{h,\Delta t}$ is positive definite, the existence of a unique solution follows from standard results for saddle point problems.
\end{proof}
Theoretically, it is not generally known that ${\cal S}_{h,\Delta t}$ is positive definite, a main difficulty in the analysis of contact problems as discussed in \cite{contact}. Empirically, we obtain a well-conditioned, discretized mixed formulation with, in particular, a unique solution.\\

A standard solver for the discrete formulation \eqref{eq:MixedProblemh} is given by the Uzawa algorithm. This is described in the following and it involves the $L^2$-projection $\text{Pr}_C : L^2((0,T]; L^2(\Gamma ) ) \to M^+_{H,\Delta T}$: 
\begin{algorithm}[H]
\caption{(Uzawa algorithm)}
\label{alg1}
\begin{algorithmic}
\STATE Fix $\rho>0$.
\STATE $k=0$,  $\lambda_{H,\Delta T}^{(0)}= 0$
\WHILE{stopping criterion not satisfied}
\STATE \textbf{solve}$\quad$ equation \eqref{eq:WeakMixedVarEqh} for ${\bf u}_{h,\Delta t}^{(k)}$
\STATE \textbf{compute}$\quad$ ${\lambda}^{(k+1)}_{H,\Delta T}= \text{Pr}_C ({\lambda}_{H,\Delta T}^{(k)} -\rho ({ u}_{\perp,h,\Delta t}^{(k)}-g)) $ 
\STATE $k \leftarrow k+1$
\ENDWHILE
\end{algorithmic}
\end{algorithm}

\begin{lemma}
Assume that ${\cal S}_{h,\Delta t}$ is positive definite. The space-time Uzawa algorithm converges, provided that $0<\rho< 2C$, with $C$ the smallest eigenvalue of ${\cal S}_{h,\Delta t}$. 
\end{lemma}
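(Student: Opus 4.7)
The plan is to follow the classical convergence proof of Uzawa's algorithm for saddle point problems, adapted to our discrete space-time setting. Let $(\mathbf{u}^*,\lambda^*) \in (X^0_{h,\Gamma_\Sigma}\otimes V^0_{\Delta t})^d \times M^+_{H,\Delta T}$ denote the unique solution of \eqref{eq:MixedProblemh}. My first observation is that $\lambda^*$ is a fixed point of the update step: for \emph{any} $\rho>0$ the Karush-Kuhn-Tucker-type conditions encoded in \eqref{eq:ContContactConstraintsh} are equivalent to
\[
\lambda^* = \mathrm{Pr}_C\bigl(\lambda^* - \rho(u_\perp^* - g)\bigr),
\]
since $\mathrm{Pr}_C$ is the $L^2$-projection onto the convex cone $M^+_{H,\Delta T}$ and the residual $u_\perp^* - g \leq 0$ acts as a normal direction at $\lambda^*$. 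This identification is standard and will be stated with a short justification.

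Next I would exploit non-expansiveness of $\mathrm{Pr}_C$ with respect to the $L^2$ space-time norm. Subtracting the fixed-point identity from the update in Algorithm \ref{alg1} yields
\[
\|\lambda^{(k+1)} - \lambda^*\|^2 \leq \bigl\|(\lambda^{(k)} - \lambda^*) - \rho(u_\perp^{(k)} - u_\perp^*)\bigr\|^2
= \|\lambda^{(k)} - \lambda^*\|^2 - 2\rho\,\bigl\langle \lambda^{(k)}-\lambda^*,\, u_\perp^{(k)} - u_\perp^*\bigr\rangle + \rho^2 \|u_\perp^{(k)} - u_\perp^*\|^2 .
\]
Then I would use equation \eqref{eq:WeakMixedVarEqh} for both $\mathbf{u}^{(k)}$ and $\mathbf{u}^*$: subtracting and testing against $\mathbf{v} = \mathbf{u}^{(k)} - \mathbf{u}^*$ gives
\[
\bigl\langle \mathcal{S}_{h,\Delta t}(\mathbf{u}^{(k)}-\mathbf{u}^*),\,\mathbf{u}^{(k)}-\mathbf{u}^*\bigr\rangle = \bigl\langle \lambda^{(k)}-\lambda^*,\, u_\perp^{(k)} - u_\perp^*\bigr\rangle .
\]
Positive definiteness of $\mathcal{S}_{h,\Delta t}$ with smallest eigenvalue $C$ bounds the left-hand side from below by $C\|\mathbf{u}^{(k)}-\mathbf{u}^*\|^2$, and since the normal component satisfies $\|u_\perp^{(k)}-u_\perp^*\|^2 \leq \|\mathbf{u}^{(k)}-\mathbf{u}^*\|^2$, I obtain
\[
\|\lambda^{(k+1)} - \lambda^*\|^2 \leq \|\lambda^{(k)} - \lambda^*\|^2 - \rho(2C - \rho)\,\|\mathbf{u}^{(k)}-\mathbf{u}^*\|^2 .
\]
For $0<\rho<2C$ the coefficient $\rho(2C-\rho)$ is strictly positive, so $\{\|\lambda^{(k)}-\lambda^*\|^2\}$ is monotonically decreasing and bounded below. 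Summing in $k$ gives
\[
\rho(2C-\rho)\sum_{k=0}^\infty \|\mathbf{u}^{(k)}-\mathbf{u}^*\|^2 \leq \|\lambda^{(0)}-\lambda^*\|^2 < \infty,
\]
hence $\mathbf{u}^{(k)}\to \mathbf{u}^*$ in the discrete $L^2$ space-time norm.

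Finally I would recover convergence of the multipliers. Substituting $\mathbf{u}^{(k)}\to \mathbf{u}^*$ into the subtracted equation \eqref{eq:WeakMixedVarEqh} shows $\langle \lambda^{(k)}-\lambda^*, v_\perp\rangle \to 0$ for every test $\mathbf{v}$; as the discrete spaces are finite-dimensional and the trace map $\mathbf{v}\mapsto v_\perp$ is surjective onto the span of $M^+_{H,\Delta T}$ (the Lagrange multiplier representing the residual of the normal displacement), this gives $\lambda^{(k)}\to \lambda^*$. The main obstacle I anticipate is only this last step: making the recovery of $\lambda^{(k)}\to \lambda^*$ rigorous requires either a discrete inf-sup condition on the pairing $\langle \mu, v_\perp\rangle$ or the elementary finite-dimensional argument that the bounded monotone sequence $\|\lambda^{(k)}-\lambda^*\|$ combined with convergence of all duality pairings forces strong convergence. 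The other steps are straightforward algebra once the fixed-point identity and the positive-definiteness assumption are in place.
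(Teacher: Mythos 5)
Your proposal follows essentially the same argument as the paper: the fixed-point identity $\lambda^*=\mathrm{Pr}_C(\lambda^*-\rho(u_\perp^*-g))$, non-expansiveness of the projection, the Galerkin identity $\langle\mathcal{S}_{h,\Delta t}(\mathbf{u}^{(k)}-\mathbf{u}^*),\mathbf{u}^{(k)}-\mathbf{u}^*\rangle=\langle\lambda^{(k)}-\lambda^*,u_\perp^{(k)}-u_\perp^*\rangle$, and the resulting monotone decrease of $\|\lambda^{(k)}-\lambda^*\|^2$ with gap $\rho(2C-\rho)\|\mathbf{u}^{(k)}-\mathbf{u}^*\|^2$. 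The only difference is that the paper interprets convergence as $\mathbf{u}^{(k)}\to\mathbf{u}^*$ and stops there, so the recovery of $\lambda^{(k)}\to\lambda^*$ that you flag as the one delicate step is not needed for (and not attempted in) the paper's proof.
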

\begin{proof}
Note that $\lambda_{H,\Delta T} = \text{Pr}_C\ {\lambda}_{H,\Delta T} =\text{Pr}_C\ ({\lambda}_{H,\Delta T} -\rho ({ u}_{\perp,h,\Delta t}-g))$, because $\lambda_{H,\Delta T} \in M^+_{H,\Delta T}$ and $\rho ({ u}_{\perp,h,\Delta t}-g) \geq 0$. From the algorithm and the contraction property of the projection $\textrm{Pr}_C$, one has
\begin{align*}
\|{\lambda}_{H,\Delta T}^{(k+1)} - {\lambda}_{H,\Delta T}\|_{L^2([0,T],L^2(\Gamma_C))}^2 &= \|\text{Pr}_C\ ({\lambda}_{H,\Delta T}^{(k)} -\rho ({u}_{\perp,h,\Delta t}^{(k)}-g)) - \text{Pr}_C\ ({\lambda}_{H,\Delta T} -\rho ({ u}_{\perp,h,\Delta t}-g))\|_{L^2([0,T],L^2(\Gamma_C))}^2\\
& \leq \|{\lambda}_{H,\Delta T}^{(k)} -{\lambda}_{H,\Delta T} -\rho ( {u}_{\perp,h,\Delta t}^{(k)} - {u}_{\perp,h,\Delta t})\|_{L^2([0,T],L^2(\Gamma_\Sigma))}^2\\
& = \|{\lambda}_{H,\Delta T}^{(k)} -{\lambda}_{H,\Delta T}\|_{L^2([0,T],L^2(\Gamma_C))}^2  + \rho^2 \|{u}_{\perp,h,\Delta t}^{(k)} - {u}_{\perp,h,\Delta t}\|^2_{L^2([0,T],L^2(\Gamma_\Sigma))} \\ & \qquad
- 2 \rho \langle {\lambda}_{H,\Delta T}^{(k)}-{\lambda}_{H,\Delta T} , {u}_{\perp,h,\Delta t}^{(k)} - {u}_{\perp,h,\Delta t}\rangle_{0,\Gamma_C,(0,T]}
\ .
\end{align*}
Using $\|{u}_{\perp,h,\Delta t}^{(k)} - {u}_{\perp,h,\Delta t}\|_{L^2([0,T],L^2(\Gamma_\Sigma))} \leq \|{\bf{u}}_{h,\Delta t}^{(k)} - {\bf{u}}_{h,\Delta t}\|_{L^2([0,T],L^2(\Gamma_\Sigma))^d}$, we find that 
\begin{align*}
&\|{\lambda}_{H,\Delta T}^{(k)} - {\lambda}_{H,\Delta T}\|_{L^2([0,T],L^2(\Gamma_C))}^2 - \|{\lambda}_{H,\Delta T}^{(k+1)} - {\lambda}_{H,\Delta T}\|_{L^2([0,T],L^2(\Gamma_C))}^2 \\& \geq 2 \rho \langle  {\lambda}_{H,\Delta T}^{(k)} -{\lambda}_{H,\Delta T}, {u}_{\perp,h,\Delta t}^{(k)} - {u}_{\perp,h,\Delta t}\rangle_{0,\Gamma_C,(0,T]} - \rho^2 \|{\bf{u}}_{h,\Delta t}^{(k)} - {\bf{u}}_{h,\Delta t}\|_{L^2([0,T],L^2(\Gamma_\Sigma))^d}^2\ .
\end{align*}
Further note 
\begin{align*}
\langle {\lambda}_{H,\Delta T}^{(k)} -{\lambda}_{H,\Delta T} , {u}_{\perp,h,\Delta t}^{(k)} - {u}_{\perp,h,\Delta t}\rangle_{0,\Gamma_C,(0,T]}  &= \langle \mathcal{S}_{h,\Delta t}({\bf{u}}_{h,\Delta t}^{(k)} - {\bf{u}}_{h,\Delta t}) , {\bf{u}}_{h,\Delta t}^{(k)} - {\bf{u}}_{h,\Delta t}\rangle_{0,\Gamma_\Sigma,(0,T]}\\
&\geq C\|{\bf{u}}_{h,\Delta t}^{(k)} - {\bf{u}}_{h,\Delta t}\|_{L^2([0,T],L^2(\Gamma_\Sigma))^d}^2\ ,
\end{align*}
so that  $$\|{\lambda}_{H,\Delta T}^{(k)} - {\lambda}_{H,\Delta T}\|_{L^2([0,T],L^2(\Gamma_C))}^2 - \|{\lambda}_{H,\Delta T}^{(k+1)} - {\lambda}_{H,\Delta T}\|_{L^2([0,T],L^2(\Gamma_C))}^2 \geq (2 \rho\, C - \rho^2) \|{\bf{u}}_{h,\Delta t}^{(k)} - {\bf{u}}_{h,\Delta t}\|^2_{L^2([0,T],L^2(\Gamma_\Sigma))^d}\ .$$ If $0<\rho<2C$ If $0<\rho<2C$, the right hand side is non-negative, and $\|{\lambda}_{H,\Delta T}^{(k)} - {\lambda}_{H,\Delta T}\|_{L^2([0,T],L^2(\Gamma_C))}$ is a decreasing sequence. As $\|{\lambda}_{H,\Delta T}^{(k)} - {\lambda}_{H,\Delta T}\|_{L^2([0,T],L^2(\Gamma_C))}\geq 0$, it converges, and we conclude  $ \|{\bf{u}}_{h,\Delta t}^{(k)} - {\bf{u}}_{h,\Delta t}\|_{L^2([0,T],L^2(\Gamma_\Sigma))^d} \to 0$.  
\end{proof}
In addition to the Uzawa algorithm for the space-time problem, as above, also Uzawa algorithms in each time-step were studied in \cite{contact}. They provide an alternative solver for problem \eqref{eq:MixedProblemh}.

\section{Error estimates}\label{sec4}

In this section we show an a priori estimate for the mixed formulation of the contact problem \eqref{eq:MixedProblem} and its discretization \eqref{eq:MixedProblemh}. It builds on the corresponding analysis for the scalar wave equation in \cite{contact} where, however, refined information about the Poincar\'{e}-Steklov operator is available. Building on the classical theory for time-domain boundary integral equations \cite{hd}, the analysis requires the  formulation of \eqref{eq:MixedProblem} presented for times $t \in \mathbb{R}^+$, using the generalized inner product \eqref{sigma_product} for $\sigma>0$ and considering
\begin{align}
{\widetilde{M}}^+:=\left\lbrace {\mu} \in H^{1/2}_\sigma(\mathbb{R}^+,\tilde{H}^{-1/2}(\Gamma_C)): \left\langle {\mu},{w}\right\rangle_{\sigma,\Gamma_C,\mathbb{R}^+} \geq 0, \ \forall \ 0\leq { w} \in H^{-1/2}_\sigma(\mathbb{R}^+,\tilde{H}^{1/2}(\Gamma_\Sigma)) \right\rbrace\,.
\end{align}
For $\textbf{f} \in H^{1/2}_\sigma\left(\mathbb{R}^+,H^{-1/2}(\Gamma_\Sigma)\right)^d$ and $g \in H^{1/2}_\sigma(\mathbb{R}^+,H^{1/2}(\Gamma_C))$ the formulation is given by:\\

\noindent \textit{Find} $(\textbf{u},{\lambda}) \in H^{1/2}_\sigma(\mathbb{R}^+,\tilde{H}^{1/2}(\Gamma_\Sigma))^d \times {\widetilde{M}}^+$ \textit{such that} 
\begin{subequations} \label{eq:MixedProblemsigma}
 \begin{alignat}{2}
\left\langle \mathcal{S}\textbf{u},\textbf{v} \right\rangle_{\sigma,\Gamma_\Sigma,\mathbb{R}^+} - \left\langle \lambda,v_\perp \right\rangle_{\sigma,\Gamma_C,\mathbb{R}^+} &= \left\langle \textbf{f},\textbf{v} \right\rangle_{\sigma,\Gamma_\Sigma,\mathbb{R}^+} &\quad &\forall \textbf{v}\in H^{1/2}_\sigma(\mathbb{R}^+,\tilde{H}^{1/2}(\Gamma_\Sigma))^d \label{eq:WeakMixedVarEqsigma}\\
\left\langle {\mu} -{\lambda},u_\perp \right\rangle_{\sigma,\Gamma_C,\mathbb{R}^+} & \geq \left\langle g,\mu-\lambda \right\rangle_{\sigma,\Gamma_C,\mathbb{R}^+} &\quad &\forall {\mu} \in {\widetilde{M}}^+ .\label{eq:ContContactConstraintssigma}
\end{alignat}
\end{subequations}

The corresponding discretization, generalizing \eqref{eq:MixedProblemh}, is given by:\\

\noindent \textit{Find} $(\textbf{u}_{h,\Delta t},{\lambda}_{H,\Delta T}) \in (X^0_{h,\Gamma_\Sigma}\otimes V^0_{\Delta t})^d \times M^+_{H,\Delta T}$ \textit{such that}
\begin{subequations} \label{eq:MixedProblemhsigma}
\begin{alignat}{2}
 &\left\langle \mathcal{S}_{h,\Delta t}\textbf{u}_{h,\Delta t},\textbf{v}_{h,\Delta t} \right\rangle_{\sigma,\Gamma_\Sigma,\mathbb{R}^+} - \left\langle {\lambda}_{H,\Delta T},{v}_{\perp,h,\Delta t} \right\rangle_{\sigma,\Gamma_C,\mathbb{R}^+} = \left\langle \textbf{f},\textbf{v}_{h,\Delta t} \right\rangle_{\sigma,\Gamma_\Sigma,\mathbb{R}^+} \quad\forall \textbf{v}_{h,\Delta t} \in (X^0_{h,\Gamma_\Sigma}\otimes V^0_{\Delta t})^d \label{eq:WeakMixedVarEqhsigma} \\
&\left\langle {\mu}_{H,\Delta T} -{\lambda}_{H,\Delta T},{u}_{\perp,h,\Delta t} \right\rangle_{\sigma,\Gamma_C,\mathbb{R}^+}  \geq \left\langle g,\mu_{H,\Delta T}-\lambda_{H,\Delta T} \right\rangle_{\sigma,\Gamma_C,\mathbb{R}^+}  \quad\quad\quad\quad\,\,\:\forall {\mu}_{H,\Delta T} \in M^+_{H,\Delta T}\ , \label{eq:ContContactConstraintshsigma}
\end{alignat}
\end{subequations}
where the involved discrete functional spaces contain functions supported in a finite number of time steps.\\
For the proof, we recall basic mapping properties of the Poincar\'{e}-Steklov operator $\mathcal{S}$. They are summarized in the following Theorem \ref{mappingProperties}. See \cite{cooper,contact} for corresponding results for the wave equation, with analogous proofs. 
\begin{theorem}
\label{mappingProperties} For $r \in \mathbb{R}$ and $\sigma>0$, $\mathcal{S}: H^{r}_\sigma(\mathbb{R}^+, {H}^{\frac{1}{2}}(\Gamma))^d \to H^{r}_\sigma(\mathbb{R}^+, {H}^{-\frac{1}{2}}(\Gamma))^d$ continuously.
\end{theorem}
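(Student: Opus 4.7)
The plan is to follow the classical Bamberger--Ha Duong framework for time-domain boundary integral equations \cite{hd}, reducing the claim to a frequency-domain estimate for the resolvent problem of the Lam\'e system. Recall that the norm of $H^r_\sigma(\mathbb{R}^+,X)$ is equivalent, via the Laplace transform, to a weighted $L^2$-norm on the line $\{\omega \in \mathbb{C} : \mathrm{Im}\,\omega = \sigma\}$ with values in $X$, in which $|\omega|^{r}$ plays the role of the Sobolev weight. Consequently, the claim reduces to showing that the Laplace symbol $\hat{\mathcal{S}}(\omega)$ of $\mathcal{S}$ is a bounded operator $H^{1/2}(\Gamma)^d \to H^{-1/2}(\Gamma)^d$ for each $\omega$ with $\mathrm{Im}\,\omega \geq \sigma$, with operator norm polynomially bounded in $|\omega|$ uniformly in $\sigma$. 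Since the weight $|\omega|^r$ on both sides matches, the polynomial bound transfers directly to continuity between the corresponding $H^r_\sigma$ spaces for arbitrary $r \in \mathbb{R}$.

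First, I would identify $\hat{\mathcal{S}}(\omega)$ as the Dirichlet-to-Neumann map for the complex-frequency elastostatic problem $\nabla\cdot\sigma(\hat{\mathbf{u}}) + \varrho\omega^{2}\hat{\mathbf{u}} = 0$ in $\Omega$ (the Laplace-transformed Navier--Lam\'e equation). Using the symmetric boundary integral representation
\[
\hat{\mathcal{S}}(\omega) = \bigl(\hat{\mathcal{K}}^{*}(\omega)+\tfrac12\bigr)\,\hat{\mathcal{V}}(\omega)^{-1}\,\bigl(\hat{\mathcal{K}}(\omega)+\tfrac12\bigr) - \hat{\mathcal{W}}(\omega),
\]
the bound on $\hat{\mathcal{S}}(\omega)$ follows by composition from the standard mapping and invertibility estimates of the Laplace symbols of $\mathcal{V},\mathcal{K},\mathcal{K}^{*},\mathcal{W}$, which are known for the elastodynamic setting from \cite{Becache1993,Becache1994,ourpaper} and satisfy polynomial-in-$|\omega|$ bounds of the Bamberger--Ha Duong type. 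Alternatively, one may avoid the integral representation and estimate $\hat{\mathcal{S}}(\omega)$ directly through a Green's-identity/energy argument: for $\hat{\mathbf{u}}$ solving the complex-frequency Lam\'e problem with Dirichlet data $\hat{\mathbf{u}}|_\Gamma$,
\[
\langle \hat{\mathcal{S}}(\omega)\hat{\mathbf{u}}|_{\Gamma},\hat{\mathbf{u}}|_{\Gamma}\rangle_{\Gamma} = \int_{\Omega}\bigl(\sigma(\hat{\mathbf{u}}):\nabla\overline{\hat{\mathbf{u}}} - \varrho\omega^{2}|\hat{\mathbf{u}}|^{2}\bigr)\,dx,
\]
which, combined with coercivity of the resolvent form for $\mathrm{Im}\,\omega \geq \sigma > 0$ (obtained by taking suitable real/imaginary parts) and trace inequalities, yields the desired $H^{1/2}\to H^{-1/2}$ continuity with explicit dependence on $|\omega|$.

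The main obstacle is to obtain the polynomial dependence on $|\omega|$ in step two with the correct power, since the elastodynamic resolvent is more delicate than the scalar wave operator treated in \cite{cooper,contact}: the Lam\'e operator involves two distinct wave speeds $c_{\mathtt P}, c_{\mathtt S}$ and its fundamental solution \eqref{fundamental solution} is correspondingly more singular. However, this is precisely the situation analyzed in \cite{Becache1993,Becache1994,AimiJCAM,ourpaper}, where the requisite $\omega$-explicit bounds on $\hat{\mathcal{V}}(\omega)$, $\hat{\mathcal{V}}(\omega)^{-1}$, $\hat{\mathcal{K}}(\omega)$, $\hat{\mathcal{K}}^{*}(\omega)$, $\hat{\mathcal{W}}(\omega)$ are established, so the argument proceeds by quoting these results and composing them. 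Once these frequency-domain bounds are in place, the inverse Laplace transform, via the equivalence of norms recalled in Appendix A, concludes the proof for all $r\in\mathbb{R}$.
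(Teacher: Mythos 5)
The paper does not actually prove this theorem: it states it and refers to \cite{cooper,contact} for ``corresponding results for the wave equation, with analogous proofs'', so the comparison is with that intended argument. Your reduction to a frequency-domain bound on the Laplace symbol $\hat{\mathcal{S}}(\omega)$ is the right framework, but there is a genuine gap in how you close it. The theorem claims a mapping $H^{r}_\sigma(\mathbb{R}^+,H^{1/2}(\Gamma))^d \to H^{r}_\sigma(\mathbb{R}^+,H^{-1/2}(\Gamma))^d$ with \emph{no loss in the time order} $r$. With the norms \eqref{sobnormdef}, this is equivalent to the bound $\|\hat{\mathcal{S}}(\omega)\|_{H^{1/2}_\omega\to H^{-1/2}_\omega}\leq C_\sigma$ holding \emph{uniformly} in $\omega$ on the line $\mathrm{Im}\,\omega=\sigma$, i.e.\ a degree-zero bound. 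Your assertion that an operator norm ``polynomially bounded in $|\omega|$ \dots transfers directly to continuity between the corresponding $H^r_\sigma$ spaces'' is false for any positive degree: a symbol bound $C|\omega|^{k}$ only yields $H^{r+k}_\sigma\to H^{r}_\sigma$. This defect is fatal for your primary route through the symmetric representation: the available Bamberger--Ha Duong-type bounds for the constituent operators --- precisely those recorded in Theorem \ref{mappingproperties} of Appendix A --- each lose one order in time (e.g.\ $\mathcal{W}:H^{r+1}_\sigma\to H^{r}_\sigma$), and the coercivity-based bound on $\hat{\mathcal{V}}(\omega)^{-1}$ loses more, so composing $\bigl(\hat{\mathcal{K}}^{*}+\tfrac12\bigr)\hat{\mathcal{V}}^{-1}\bigl(\hat{\mathcal{K}}+\tfrac12\bigr)-\hat{\mathcal{W}}$ can only deliver $H^{r+k}_\sigma\to H^{r}_\sigma$ for some $k\geq 1$, a strictly weaker statement than the theorem.

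Your alternative route --- the Green's identity for the complex-frequency Lam\'e problem --- is the one that can actually produce the lossless estimate, and it is the argument behind the results of \cite{cooper} and \cite{contact} that the paper invokes. As written, however, it is only a sketch, and the crux is exactly the point you pass over: one must establish that the resolvent estimate for $\nabla\cdot\sigma(\hat{\mathbf{u}})+\varrho\omega^{2}\hat{\mathbf{u}}=0$ with Dirichlet data, measured in the $\omega$-weighted norms $\|\cdot\|_{1,\omega,\Omega}$ and $\|\cdot\|_{\pm 1/2,\omega,\Gamma}$ of Appendix A, holds with constants depending only on $\sigma$ and not on $|\omega|$, including an $\omega$-uniform trace lifting. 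That is where the real work (and the influence of the two wave speeds $c_{\mathtt{P}}$, $c_{\mathtt{S}}$) lies, and it is not supplied by quoting the mapping properties of the layer potentials. I recommend discarding the composition route and carrying out the energy argument in full.
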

Here and below we write $A \lesssim B$ provided there exists a constant $C$ such that $A \leq CB$. If the constant $C$ is allowed to depend on a parameter $\sigma$, we write $A \lesssim_\sigma B$.\\

Like in \cite{contact} an inf-sup estimate is a crucial ingredient to estimate the error: 
\begin{theorem}\label{discInfSup}
Let $C>0$ sufficiently small and $\frac{\max\{h, \Delta t\}}{\min\{H, \Delta T\}}<C$. Then there exists $\alpha>0$ such that 
$\forall\,\lambda_{H, \Delta T} \in  X^{-1}_{H,\Gamma_C}\otimes V^{-1}_{\Delta T}$: 
$$\sup_{\mathbf{v}_{h,\Delta t} \in (X^0_{h,\Gamma_\Sigma}\otimes V^0_{\Delta t})^d} \frac{\langle v_{\perp,h,\Delta t}, \lambda_{ H, \Delta T}\rangle_{\sigma, \Gamma_C, \mathbb{R}^+}}{\|\mathbf{v}_{h,\Delta t}\|_{ 0,\frac{1}{2}, \sigma, \ast}} \geq \alpha \|\lambda_{H,\Delta T}\|_{0, -\frac{1}{2}, \sigma }\ .$$
\end{theorem}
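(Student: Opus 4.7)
The proof plan follows the Fortin-type strategy for discrete inf-sup conditions, adapting the argument used in \cite{contact} for the scalar wave equation. The overall idea is, given $\lambda_{H,\Delta T}$ in the coarse Lagrange-multiplier space, to construct explicitly a test function $\mathbf{v}_{h,\Delta t}$ in the fine displacement space that realizes the required lower bound. This proceeds in two stages: first choose a continuous lifting, then project it onto the discrete space and control the resulting error by the mesh-ratio hypothesis.

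The first step is to invoke a continuous inf-sup condition. Given $\mu \in H^{-1/2}_\sigma(\mathbb{R}^+, \tilde{H}^{-1/2}(\Gamma_C))$, I would construct $\mathbf{v}^\mu \in H^{1/2}_\sigma(\mathbb{R}^+, \tilde{H}^{1/2}(\Gamma_\Sigma))^d$ whose normal component essentially realizes $\mu$, with $\|\mathbf{v}^\mu\|_{0,1/2,\sigma,\ast} \lesssim_\sigma \|\mu\|_{0,-1/2,\sigma}$ and
\[
\langle v^\mu_\perp, \mu\rangle_{\sigma,\Gamma_C,\mathbb{R}^+} \gtrsim \|\mu\|_{0,-1/2,\sigma}^2,
\]
using a lifting of $\mu \mathbf{n}$ together with the mapping properties of $\mathcal{S}$ from Theorem \ref{mappingProperties}. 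Applying this to $\mu=\lambda_{H,\Delta T}$ fixes a continuous test function $\mathbf{v}^\lambda$.

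The second step is to discretize $\mathbf{v}^\lambda$ via a tensor-product quasi-interpolation $\Pi_{h,\Delta t}$ onto $(X^0_{h,\Gamma_\Sigma}\otimes V^0_{\Delta t})^d$ that is stable in the anisotropic norm $\|\cdot\|_{0,1/2,\sigma,\ast}$ and satisfies the optimal $L^2$-approximation bound $\|\mathbf{v}^\lambda - \Pi_{h,\Delta t}\mathbf{v}^\lambda\|_{L^2_\sigma L^2} \lesssim (h^{1/2}+\Delta t^{1/2})\|\mathbf{v}^\lambda\|_{0,1/2,\sigma,\ast}$. Setting $\mathbf{v}_{h,\Delta t}:=\Pi_{h,\Delta t}\mathbf{v}^\lambda$ and decomposing
\[
\langle v_{\perp,h,\Delta t}, \lambda_{H,\Delta T}\rangle_{\sigma,\Gamma_C,\mathbb{R}^+} = \langle v^\lambda_\perp, \lambda_{H,\Delta T}\rangle_{\sigma,\Gamma_C,\mathbb{R}^+} - \langle v^\lambda_\perp - v_{\perp,h,\Delta t}, \lambda_{H,\Delta T}\rangle_{\sigma,\Gamma_C,\mathbb{R}^+},
\]
the leading term is bounded below by $\|\lambda_{H,\Delta T}\|_{0,-1/2,\sigma}^2$. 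For the error term I would use Cauchy--Schwarz together with an inverse inequality on the coarse piecewise-constant space, $\|\lambda_{H,\Delta T}\|_{L^2_\sigma L^2(\Gamma_C)} \lesssim H^{-1/2}\Delta T^{-1/2}\|\lambda_{H,\Delta T}\|_{0,-1/2,\sigma}$, to obtain a bound of the form $(h/H)^{1/2}(\Delta t/\Delta T)^{1/2}\|\lambda_{H,\Delta T}\|_{0,-1/2,\sigma}^2$. Under the hypothesis $\max\{h,\Delta t\}/\min\{H,\Delta T\}<C$ with $C$ small enough this is absorbed into the leading term, and dividing by $\|\mathbf{v}_{h,\Delta t}\|_{0,1/2,\sigma,\ast} \lesssim \|\lambda_{H,\Delta T}\|_{0,-1/2,\sigma}$ yields the claim with an explicit $\alpha>0$.

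The main obstacle is the first step: the vector-valued continuous lifting of a scalar contact multiplier $\mu$ on $\Gamma_C$ into a field in $\tilde{H}^{1/2}_\sigma(\mathbb{R}^+,\Gamma_\Sigma)^d$ with controlled anisotropic space-time norm. In the scalar setting of \cite{contact} this amounts to an $H^{1/2}$-extension in space-time, while here the normal/tangential decomposition and the vector character of $\mathcal{S}$ complicate the argument. A secondary technical point, independent of the elastic nature, is the construction of a tensor-product quasi-interpolation in the anisotropic Sobolev space $H^{1/2}_\sigma(\mathbb{R}^+,\tilde{H}^{1/2}(\Gamma_\Sigma))$ that is simultaneously stable and optimally approximating; this requires combining a Scott--Zhang-type operator in space with a corresponding $H^{1/2}$-stable projector in time.
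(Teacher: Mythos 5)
The paper does not actually prove Theorem \ref{discInfSup}: its ``proof'' is the single sentence deferring to Theorem 15 of \cite{contact}, where the analogous statement is established for the scalar wave equation. Your Fortin-type reconstruction --- continuous inf-sup via an explicit lifting of the multiplier, tensor-product quasi-interpolation onto $(X^0_{h,\Gamma_\Sigma}\otimes V^0_{\Delta t})^d$, an inverse estimate on the coarse multiplier space, and absorption of the consistency error under the mesh-ratio hypothesis --- is exactly the structure of the cited proof, so you have in effect taken the same route as the source the authors rely on. Two caveats are worth recording. First, your intermediate bounds are not individually consistent with the anisotropic norms: the approximation estimate $\|\mathbf{v}^\lambda-\Pi_{h,\Delta t}\mathbf{v}^\lambda\|_{L^2_\sigma L^2}\lesssim (h^{1/2}+\Delta t^{1/2})\|\mathbf{v}^\lambda\|_{0,\frac{1}{2},\sigma,\ast}$ cannot gain any power of $\Delta t$, because the norm on the right has time-regularity index $0$; symmetrically, the inverse estimate from $\|\cdot\|_{0,-\frac{1}{2},\sigma}$ to $L^2_\sigma L^2(\Gamma_C)$ costs only $H^{-1/2}$ and should not carry a factor $\Delta T^{-1/2}$. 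The final product $(h/H)^{1/2}(\Delta t/\Delta T)^{1/2}$ you quote is therefore not obtained the way you describe; in \cite{contact} the dependence on both ratios enters through a more careful splitting (interpolation in time is paid for by extra temporal regularity of the lifting, not by the $H^0_\sigma$ norm alone), and this bookkeeping is what produces the condition on $\max\{h,\Delta t\}/\min\{H,\Delta T\}$ in the statement. Second, the step you yourself identify as the main obstacle --- the vector-valued space-time lifting of a scalar multiplier on $\Gamma_C$ into $H^{1/2}_\sigma(\mathbb{R}^+,\tilde H^{1/2}(\Gamma_\Sigma))^d$ with control of $\|\cdot\|_{0,\frac{1}{2},\sigma,\ast}$ --- is precisely the point where the scalar argument of \cite{contact} does not transfer verbatim to elastodynamics; the paper passes over this silently by citation, so your sketch is no less complete than the paper on this point, but neither supplies the elastic lifting explicitly.
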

The proof is found in \cite[Theorem 15]{contact}. Now, we can prove the following result:
\begin{theorem}\label{apriori}
Let $(\textbf{u},{\lambda}) \in H^{1/2}_\sigma(\mathbb{R}^+,\tilde{H}^{1/2}(\Gamma_\Sigma))^d \times {\widetilde{M}}^+$ be a solution to the mixed problem \eqref{eq:MixedProblemsigma} and $(\textbf{u}_{h,\Delta t},{\lambda}_{H,\Delta T}) \in (X^0_{h,\Gamma_\Sigma}\otimes V^0_{\Delta t})^d \times M^+_{H,\Delta T}$ a solution of the discretized mixed problem \eqref{eq:MixedProblemhsigma}. Assume that $\mathcal{S}$ is coercive. Then for a sufficiently small constant $C>0$ and $\frac{\max\{h, \Delta t\}}{\min\{H, \Delta T\}}<C$, the following a priori estimates hold: \\
\begin{align}
\| \lambda -\lambda_{H,\Delta T} \|_{0,-\frac{1}{2}, \sigma}  &\lesssim_\sigma \inf \limits_{\tilde{\lambda}_{H,\Delta T} \in M^+_{H,\Delta T}} \| \lambda - \tilde{\lambda}_{H,\Delta T} \|_{0,-\frac{1}{2},\sigma} +(\Delta t)^{-\frac{1}{2}}  \|{\bf u}- {\bf u}_{h,\Delta t}\|_{-\frac{1}{2},\frac{1}{2},\sigma, \ast} \ ,\label{est_1}\\
\|{\bf u}-{\bf u}_{h,\Delta t}\|_{-\frac{1}{2},\frac{1}{2},\sigma, \ast} &\lesssim_\sigma \inf \limits_{{\bf v}_{h,\Delta t} \in (X^0_{h,\Gamma_\Sigma}\otimes V^0_{\Delta t})^d}
\|{\bf u}-{\bf v}_{h,\Delta t}\|_{\frac{1}{2},\frac{1}{2},\sigma, \ast}\nonumber \\& \qquad
+ \inf \limits_{\tilde{\lambda}_{H,\Delta T} \in M^+_{H,\Delta T}}\left\{\|\tilde{\lambda}_{H,\Delta T} - \lambda\|_{\frac{1}{2},-\frac{1}{2},\sigma} +\|\tilde{\lambda}_{H,\Delta T} - {\lambda}_{H,\Delta T}\|_{\frac{1}{2},-\frac{1}{2},\sigma}\right\} \ \label{est_2}. 
\end{align}
\end{theorem}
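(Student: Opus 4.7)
The plan is to adapt the standard Babuska-Brezzi error analysis for mixed saddle-point problems to this variational inequality setting, following the template laid out in \cite{contact}. The three pillars of the argument are the Galerkin-orthogonality-like identity obtained by subtracting the discrete and continuous equalities \eqref{eq:WeakMixedVarEqsigma}, \eqref{eq:WeakMixedVarEqhsigma}, the discrete inf-sup condition from Theorem \ref{discInfSup}, and the assumed coercivity of $\mathcal{S}$ together with its continuity (Theorem \ref{mappingProperties}). I would first record that for every $\mathbf{v}_{h,\Delta t}\in (X^0_{h,\Gamma_\Sigma}\otimes V^0_{\Delta t})^d$, subtracting \eqref{eq:WeakMixedVarEqhsigma} from \eqref{eq:WeakMixedVarEqsigma} yields the consistency identity $\langle \mathcal{S}(\mathbf{u}-\mathbf{u}_{h,\Delta t}),\mathbf{v}_{h,\Delta t}\rangle_{\sigma,\Gamma_\Sigma,\mathbb{R}^+} = \langle \lambda-\lambda_{H,\Delta T},v_{\perp,h,\Delta t}\rangle_{\sigma,\Gamma_C,\mathbb{R}^+}$, which will drive both estimates.

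For the multiplier estimate \eqref{est_1}, fix an arbitrary $\tilde{\lambda}_{H,\Delta T}\in M^+_{H,\Delta T}$ and apply Theorem \ref{discInfSup} to the discrete object $\lambda_{H,\Delta T}-\tilde{\lambda}_{H,\Delta T}\in X^{-1}_{H,\Gamma_C}\otimes V^{-1}_{\Delta T}$. In the resulting supremum, split the numerator as $\langle v_{\perp,h,\Delta t},\lambda_{H,\Delta T}-\lambda\rangle + \langle v_{\perp,h,\Delta t},\lambda-\tilde{\lambda}_{H,\Delta T}\rangle$; the first piece is rewritten via the consistency identity as $-\langle \mathcal{S}(\mathbf{u}-\mathbf{u}_{h,\Delta t}),\mathbf{v}_{h,\Delta t}\rangle$, which by continuity of $\mathcal{S}$ is bounded by $\|\mathbf{u}-\mathbf{u}_{h,\Delta t}\|_{0,\frac{1}{2},\sigma,\ast}\|\mathbf{v}_{h,\Delta t}\|_{0,\frac{1}{2},\sigma,\ast}$. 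To convert the stronger time-norm on the displacement error to the weaker $\|\cdot\|_{-\frac{1}{2},\frac{1}{2},\sigma,\ast}$ required on the right-hand side of \eqref{est_1}, I would insert a discrete interpolant and apply an inverse estimate in time between regularities $-\tfrac{1}{2}$ and $0$, which costs a factor $(\Delta t)^{-1/2}$. The second piece is directly bounded by $\|\lambda-\tilde{\lambda}_{H,\Delta T}\|_{0,-\frac{1}{2},\sigma}\|\mathbf{v}_{h,\Delta t}\|_{0,\frac{1}{2},\sigma,\ast}$. Dividing by $\|\mathbf{v}_{h,\Delta t}\|_{0,\frac{1}{2},\sigma,\ast}$, adding $\|\lambda-\tilde{\lambda}_{H,\Delta T}\|$ via triangle inequality, and taking the infimum over $\tilde{\lambda}_{H,\Delta T}$ gives \eqref{est_1}.

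For the displacement estimate \eqref{est_2}, set $\mathbf{e}_{h,\Delta t}=\mathbf{v}_{h,\Delta t}-\mathbf{u}_{h,\Delta t}$ for arbitrary $\mathbf{v}_{h,\Delta t}$ in the discrete space and exploit coercivity of $\mathcal{S}$ to bound $\|\mathbf{e}_{h,\Delta t}\|_{-\frac{1}{2},\frac{1}{2},\sigma,\ast}^2$ by $\langle \mathcal{S}\mathbf{e}_{h,\Delta t},\mathbf{e}_{h,\Delta t}\rangle$. Decompose the latter as $\langle \mathcal{S}(\mathbf{v}_{h,\Delta t}-\mathbf{u}),\mathbf{e}_{h,\Delta t}\rangle + \langle \mathcal{S}(\mathbf{u}-\mathbf{u}_{h,\Delta t}),\mathbf{e}_{h,\Delta t}\rangle$; the first summand is handled by continuity of $\mathcal{S}$ and absorbed into the approximation term for $\mathbf{u}$, while the consistency identity turns the second into $\langle \lambda-\lambda_{H,\Delta T},e_{\perp,h,\Delta t}\rangle$. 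To process this duality pairing under the inequality constraints, insert $\tilde{\lambda}_{H,\Delta T}\in M^+_{H,\Delta T}$ and split it as $\langle \lambda-\tilde{\lambda}_{H,\Delta T},e_{\perp,h,\Delta t}\rangle + \langle \tilde{\lambda}_{H,\Delta T}-\lambda_{H,\Delta T},e_{\perp,h,\Delta t}\rangle$; the first term is bounded by the approximation error of $\lambda$ combined with the norm of $\mathbf{e}_{h,\Delta t}$, and for the second I would substitute $\mu=\tilde{\lambda}_{H,\Delta T}$ into the continuous VI \eqref{eq:ContContactConstraintssigma} and $\mu_{H,\Delta T}=\tilde{\lambda}_{H,\Delta T}$ into the discrete VI \eqref{eq:ContContactConstraintshsigma}, using the combination together with the complementarity relation $\langle\lambda,u_\perp-g\rangle=0$ to rewrite everything in terms of $\mathbf{u}-\mathbf{v}_{h,\Delta t}$ and $\lambda-\tilde{\lambda}_{H,\Delta T}$. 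Absorbing $\|\mathbf{e}_{h,\Delta t}\|_{-\frac{1}{2},\frac{1}{2},\sigma,\ast}$ into the left-hand side via a Young inequality and applying the triangle inequality to recover $\mathbf{u}-\mathbf{u}_{h,\Delta t}$ from $\mathbf{u}-\mathbf{v}_{h,\Delta t}$ and $\mathbf{e}_{h,\Delta t}$ yields \eqref{est_2} after taking the infima.

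The main obstacle, and the step I expect to require the most care, is producing the factor $(\Delta t)^{-1/2}$ in \eqref{est_1} in a clean way: since $\mathbf{u}-\mathbf{u}_{h,\Delta t}$ is not itself a discrete function, no inverse estimate applies to it directly, so the argument must go through an intermediate discrete quantity obtained by adding and subtracting a suitable interpolant, and one must verify that in the scale of the $\ast$-norms the inverse estimate between time regularities $-\tfrac{1}{2}$ and $0$ indeed produces exactly $(\Delta t)^{-1/2}$. This is also where the mesh-ratio hypothesis $\frac{\max\{h,\Delta t\}}{\min\{H,\Delta T\}}<C$ is inherited from Theorem \ref{discInfSup}. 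A secondary technical point is the bookkeeping around the complementarity/Signorini terms, which must be combined exactly as in \cite[Theorem 16]{contact} so that the sign information from \eqref{eq:ContContactConstraintssigma}--\eqref{eq:ContContactConstraintshsigma} matches up with the continuous complementarity identity.
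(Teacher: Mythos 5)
Your overall architecture matches the paper's proof: the consistency identity obtained by subtracting \eqref{eq:WeakMixedVarEqhsigma} from \eqref{eq:WeakMixedVarEqsigma}, the inf-sup condition of Theorem \ref{discInfSup} applied to $\lambda_{H,\Delta T}-\tilde{\lambda}_{H,\Delta T}$ for \eqref{est_1}, and coercivity of $\mathcal{S}$ applied to $\mathbf{u}_{h,\Delta t}-\mathbf{v}_{h,\Delta t}$ for \eqref{est_2}. However, there is a genuine gap in how you produce the factor $(\Delta t)^{-1/2}$ in \eqref{est_1}, and you have correctly sensed the danger yourself. You first bound $|\langle\mathcal{S}(\mathbf{u}-\mathbf{u}_{h,\Delta t}),\mathbf{v}_{h,\Delta t}\rangle|$ by $\|\mathbf{u}-\mathbf{u}_{h,\Delta t}\|_{0,\frac{1}{2},\sigma,\ast}\,\|\mathbf{v}_{h,\Delta t}\|_{0,\frac{1}{2},\sigma,\ast}$ and then try to lower the time regularity of the \emph{error} from $0$ to $-\tfrac12$ by inserting an interpolant and applying an inverse estimate. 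Since $\mathbf{u}-\mathbf{u}_{h,\Delta t}$ is not discrete, the interpolant route leaves behind an additional interpolation-error term of the form $\|\mathbf{u}-I\mathbf{u}\|_{0,\frac{1}{2},\sigma,\ast}$ that does not appear in \eqref{est_1}; your argument therefore proves a weaker statement than the one claimed. The paper avoids this entirely by distributing the time regularities asymmetrically in the duality pairing: it estimates
\begin{equation*}
|\langle \mathcal{S}(\mathbf{u}_{h,\Delta t}-\mathbf{u}),\mathbf{v}_{h,\Delta t}\rangle_{\sigma,\Gamma_\Sigma,\mathbb{R}^+}|
\leq \|\mathcal{S}(\mathbf{u}_{h,\Delta t}-\mathbf{u})\|_{-\frac{1}{2},-\frac{1}{2},\sigma}\,\|\mathbf{v}_{h,\Delta t}\|_{\frac{1}{2},\frac{1}{2},\sigma,\ast}
\lesssim \|\mathbf{u}_{h,\Delta t}-\mathbf{u}\|_{-\frac{1}{2},\frac{1}{2},\sigma,\ast}\,(\Delta t)^{-\frac{1}{2}}\|\mathbf{v}_{h,\Delta t}\|_{0,\frac{1}{2},\sigma,\ast},
\end{equation*}
using the mapping property of $\mathcal{S}$ at time regularity $r=-\tfrac12$ (Theorem \ref{mappingProperties}) on the error, and the inverse inequality in time on the \emph{test function} $\mathbf{v}_{h,\Delta t}$, which genuinely is discrete. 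No interpolant is needed, and the denominator $\|\mathbf{v}_{h,\Delta t}\|_{0,\frac{1}{2},\sigma,\ast}$ of the inf-sup quotient appears exactly. You should restructure this step accordingly.

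A secondary remark on \eqref{est_2}: your proposed detour through the variational inequalities \eqref{eq:ContContactConstraintssigma}--\eqref{eq:ContContactConstraintshsigma} and the complementarity relation is unnecessary for the estimate as stated. Because \eqref{est_2} retains the term $\|\tilde{\lambda}_{H,\Delta T}-\lambda_{H,\Delta T}\|_{\frac{1}{2},-\frac{1}{2},\sigma}$ on the right-hand side, the paper simply bounds $\langle \tilde{\lambda}_{H,\Delta T}-\lambda_{H,\Delta T}, u_{\perp,h,\Delta t}-v_{\perp,h,\Delta t}\rangle_{\sigma,\Gamma_C,\mathbb{R}^+}$ directly by continuity of the duality pairing; attempting to eliminate that term via the sign information of the constraints is precisely what is hard in this time-dependent setting and is not what the theorem asserts. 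Apart from these two points, your decompositions for both estimates coincide with the paper's.
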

\begin{proof}
Using equations \eqref{eq:MixedProblemsigma} and \eqref{eq:MixedProblemhsigma}, we note that for arbitrary $\tilde{\lambda}_{H,\Delta T} \in M^+_{H,\Delta T}$ the following identity holds:
\begin{align} \label{lambdaEq}
\langle \lambda_{H,\Delta T}-\tilde{\lambda}_{H,\Delta T}, v_{\perp,h,\Delta t} \rangle_{\sigma,\Gamma_C,\mathbb{R}^+} & = \langle \mathcal{S} \mathbf{u}_{h,\Delta t}, \mathbf{v}_{h,\Delta t} \rangle_{\sigma,\Gamma_\Sigma,\mathbb{R}^+} - \langle \mathbf{f}, \mathbf{v}_{h,\Delta t}\rangle_{\sigma,\Gamma_\Sigma,\mathbb{R}^+} - \langle \tilde{\lambda}_{H,\Delta T}, v_{\perp,h,\Delta t} \rangle_{\sigma,\Gamma_C,\mathbb{R}^+} \nonumber \\  &= \langle \mathcal{S} \mathbf{u}_{h,\Delta t}, \mathbf{v}_{h,\Delta t} \rangle_{\sigma,\Gamma_\Sigma,\mathbb{R}^+} - \langle \mathcal{S} \mathbf{u}, \mathbf{v}_{h,\Delta t} \rangle_{\sigma,\Gamma_\Sigma,\mathbb{R}^+} +\langle  \lambda , v_{\perp,h,\Delta t} \rangle_{\sigma,\Gamma_C,\mathbb{R}^+}\nonumber \\ &\qquad -\langle \tilde{\lambda}_{H,\Delta T}, v_{\perp,h,\Delta t} \rangle_{\sigma,\Gamma_C,\mathbb{R}^+} \nonumber 
\\ &= \langle \mathcal{S} (\mathbf{u}_{h,\Delta t} -\mathbf{u}),\mathbf{v}_{h,\Delta t} \rangle_{\sigma,\Gamma_\Sigma,\mathbb{R}^+} + \langle \lambda - \tilde{\lambda}_{H,\Delta T}, v_{\perp,h,\Delta t} \rangle_{\sigma,\Gamma_C,\mathbb{R}^+}.
\end{align} 
From \eqref{lambdaEq} and the inf-sup condition in Theorem \ref{discInfSup}, we obtain:
\begin{align*}
\alpha \| \lambda_{H,\Delta T}-\tilde{\lambda}_{H,\Delta T}\|_{0,-\frac{1}{2},\sigma} &\leq \sup_{ \mathbf{v}_{h,\Delta t}\in (X^0_{h,\Gamma_\Sigma}\otimes V^0_{\Delta t})^d} \frac{ \langle  \lambda_{H,\Delta T}-\tilde{\lambda}_{H,\Delta T}, v_{\perp,h,\Delta t} \rangle_{\sigma,\Gamma_C,\mathbb{R}^+}}{\|\mathbf{v}_{h,\Delta t}\|_{0,\frac{1}{2}, \sigma, \ast} }
\\ & =
\sup_{ \mathbf{v}_{h,\Delta t}\in (X^0_{h,\Gamma_\Sigma}\otimes V^0_{\Delta t})^d} \frac{\langle \mathcal{S} (\mathbf{u}_{h,\Delta t} -\mathbf{u}),\mathbf{v}_{h,\Delta t} \rangle_{\sigma,\Gamma_\Sigma,\mathbb{R}^+} + \langle \lambda-\tilde{\lambda}_{H,\Delta T}, v_{\perp,h,\Delta t} \rangle_{\sigma,\Gamma_C,\mathbb{R}^+}}{\|\mathbf{v}_{h,\Delta t}\|_{0,\frac{1}{2}, \sigma, \ast}}\ .
\end{align*}
We separately estimate the two terms of the numerator. Using the continuity of the  duality pairing and an inverse inequality in time \cite{setup}, the first term is estimated by\\
\begin{align*}
|\langle \mathcal{S} (\mathbf{u}_{h,\Delta t} -u),\mathbf{v}_{h,\Delta t} \rangle_{\sigma,\Gamma_\Sigma,\mathbb{R}^+}| & \leq \| \mathcal{S}(\mathbf{u}_{h,\Delta t}-\mathbf{u})\|_{-\frac{1}{2},-\frac{1}{2},\sigma} \|{\mathbf{v}_{h,\Delta t}}\|_{\frac{1}{2},\frac{1}{2},\sigma, \ast} 
\\ &\lesssim\,
\| \mathbf{u}_{h,\Delta t} -\mathbf{u}\|_{-\frac{1}{2},\frac{1}{2},\sigma,\ast}(\Delta t)^{-\frac{1}{2}} \|\mathbf{v}_{h,\Delta t}\|_{0,\frac{1}{2},\sigma, \ast}\ .\end{align*}
For the second term, a similar argument yields:
\begin{align*}
|\langle \lambda-\tilde{\lambda}_{H,\Delta T}, v_{\perp,h,\Delta t} \rangle_{\sigma,\Gamma_C,\mathbb{R}^+}|& \leq \| \lambda-\tilde{\lambda}_{H,\Delta T} \|_{0,-\frac{1}{2},\sigma} \|\mathbf{v}_{h,\Delta t}\|_{0,\frac{1}{2},\sigma, \ast}
\ .
\end{align*} 
We conclude the a priori estimate as in \eqref{est_1}, i.e.
\begin{align}\label{lambdaapriori}
 \|\lambda -\lambda_{H,\Delta T}\|_{0,-\frac{1}{2}, \sigma} &\leq \inf \limits_{\tilde{\lambda}_{H,\Delta T}}  \left( \|\lambda -\tilde{\lambda}_{H,\Delta T}\|_{0,-\frac{1}{2}, \sigma} + \|{\lambda}_{H,\Delta T} -\tilde{\lambda}_{H,\Delta T}\|_{0,-\frac{1}{2}, \sigma} \right) \nonumber \\  &\lesssim_\sigma  \inf \limits_{\tilde{\lambda}_{H,\Delta T}} \| \lambda - \tilde{\lambda}_{H,\Delta T} \|_{0,-\frac{1}{2},\sigma} +(\Delta t)^{-\frac{1}{2}}  \| \mathbf{u}_{h,\Delta t}-\mathbf{u}\|_{-\frac{1}{2},\frac{1}{2},\sigma, \ast} \ .\nonumber
\end{align}
Next we combine the Galerkin orthogonality
\begin{equation*}
\langle\mathcal{S}(\mathbf{u}-\mathbf{u}_{h,\Delta t}),\mathbf{v}_{h,\Delta t}\rangle_{\sigma,\Gamma_\Sigma,\mathbb{R}^+} = \langle \lambda -{\lambda}_{H,\Delta T}, v_{\perp,h,\Delta t}\rangle_{\sigma,\Gamma_C,\mathbb{R}^+}
\end{equation*}
with the coercivity of the Poincar\'{e}-Steklov operator, leading to
\begin{align*}
\|\mathbf{u}_{h,\Delta t} - \mathbf{v}_{h,\Delta t}\|_{-\frac{1}{2},\frac{1}{2},\sigma, \ast}^2 &\lesssim_\sigma\langle\mathcal{S}(\mathbf{u}_{h,\Delta t}-\mathbf{v}_{h,\Delta t}), \mathbf{u}_{h,\Delta t}- \mathbf{v}_{h,\Delta t}\rangle_{\sigma,\Gamma_\Sigma,\mathbb{R}^+} \\
& = \langle\mathcal{S}(\mathbf{u}-\mathbf{v}_{h,\Delta t}),\mathbf{u}_{h,\Delta t}- \mathbf{v}_{h,\Delta t}\rangle_{\sigma,\Gamma_\Sigma,\mathbb{R}^+} + \langle\mathcal{S}(\mathbf{u}_{h,\Delta t}-\mathbf{u}), \mathbf{u}_{h,\Delta t}- \mathbf{v}_{h,\Delta t}\rangle_{\sigma,\Gamma_\Sigma,\mathbb{R}^+}\\ 
& = \langle \mathcal{S}(\mathbf{u}-\mathbf{v}_{h,\Delta t}),\mathbf{u}_{h,\Delta t}- \mathbf{v}_{h,\Delta t}\rangle_{\sigma,\Gamma_\Sigma,\mathbb{R}^+}\\& \qquad+\langle \tilde{\lambda}_{H,\Delta T} - \lambda+{\lambda}_{H,\Delta T}-\tilde{\lambda}_{H,\Delta T}, u_{\perp,h,\Delta t}-  v_{\perp,h,\Delta t}\rangle_{\sigma,\Gamma_C,\mathbb{R}^+} 
\end{align*}
for all $\mathbf{v}_{h,\Delta t}$ and $\tilde{\lambda}_{H,\Delta T}$. The mapping properties of $\mathcal{S}$ and the continuity of the duality pairing then show
\begin{align*}
\|\mathbf{u}_{h,\Delta t} - \mathbf{v}_{h,\Delta t}\|_{-\frac{1}{2},\frac{1}{2},\sigma, \ast}^2 &\lesssim \|\mathbf{u}-\mathbf{v}_{h,\Delta t}\|_{\frac{1}{2},\frac{1}{2},\sigma, \ast}\|\mathbf{u}_{h,\Delta t}- \mathbf{v}_{h,\Delta t}\|_{-\frac{1}{2},\frac{1}{2},\sigma, \ast}\\
& \quad +\|\tilde{\lambda}_{H,\Delta T} - \lambda\|_{\frac{1}{2},-\frac{1}{2},\sigma} \| \mathbf{u}_{h,\Delta t}-  \mathbf{v}_{h,\Delta t}\|_{-\frac{1}{2},\frac{1}{2},\sigma, \ast} \\ & \quad + \|\tilde{\lambda}_{H,\Delta T}-{\lambda}_{H,\Delta T}\|_{\frac{1}{2},-\frac{1}{2},\sigma} \| \mathbf{u}_{h,\Delta t}-  \mathbf{v}_{h,\Delta t}\|_{-\frac{1}{2},\frac{1}{2},\sigma, \ast}\ . 
\end{align*}
Dividing by $\|\mathbf{u}_{h,\Delta t} - \mathbf{v}_{h,\Delta t}\|_{-\frac{1}{2},\frac{1}{2},\sigma, \ast}$, we find
\begin{align*}
\|\mathbf{u}_{h,\Delta t} - \mathbf{v}_{h,\Delta t}\|_{-\frac{1}{2},\frac{1}{2},\sigma, \ast} &\lesssim_\sigma \|\mathbf{u}-\mathbf{v}_{h,\Delta t}\|_{\frac{1}{2},\frac{1}{2},\sigma, \ast}
+\|\tilde{\lambda}_{H,\Delta T} - \lambda\|_{\frac{1}{2},-\frac{1}{2},\sigma}  + \|\tilde{\lambda}_{H,\Delta T} - {\lambda}_{H,\Delta T}\|_{\frac{1}{2},-\frac{1}{2},\sigma}\ . 
\end{align*}
The triangle inequality now shows
\begin{align*}
\|\mathbf{u}-\mathbf{u}_{h,\Delta t}\|_{-\frac{1}{2},\frac{1}{2},\sigma, \ast} &\lesssim_\sigma \|\mathbf{u}-\mathbf{v}_{h,\Delta t}\|_{\frac{1}{2},\frac{1}{2},\sigma, \ast}
+\|\tilde{\lambda}_{H,\Delta T} - \lambda\|_{\frac{1}{2},-\frac{1}{2},\sigma} +\|\tilde{\lambda}_{H,\Delta T} - {\lambda}_{H,\Delta T}\|_{\frac{1}{2},-\frac{1}{2},\sigma} \,, 
\end{align*}
and estimate \eqref{est_2} follows.
\end{proof}

\begin{remark}
 { Note that the assumption $(\textbf{u},{\lambda}) \in H^{1/2}_\sigma(\mathbb{R}^+,\tilde{H}^{1/2}(\Gamma_\Sigma))^d\times {\widetilde{M}}^+$ in Theorem \ref{apriori} allows jump discontinuities in time of the velocity $\dot{\mathbf{u}} \in H^{-1/2}_\sigma(\mathbb{R}^+,\tilde{H}^{1/2}(\Gamma_\Sigma))^d$, as they occur in the case of an impact.}\\
 {The existence of a unique solution $(\textbf{u},{\lambda}) \in H^{1/2}_\sigma(\mathbb{R}^+,\tilde{H}^{1/2}(\Gamma_\Sigma))^d \times {\widetilde{M}}^+$ and the coercivity of $\mathcal{S}$ are known, for example, in the case of the analogous Signorini contact problem for the wave equation in simple geometries for sufficiently smooth right hand side $\textbf{f} \in H^{3/2}_\sigma\left(\mathbb{R}^+,H^{-1/2}(\Gamma_\Sigma)\right)^d$ and $g = 0$, see p.~450 in \cite{cooper} or (in a slightly larger function space for more general data) in \cite{lebeau}. }
\end{remark}

\begin{remark}\label{convremark}
{If the solution $(\mathbf{u}, \lambda)$ is more regular, we obtain convergence rates which are optimal in $h$, but suboptimal in $\Delta t$. Specifically, let us illustrate the results  when the time step $\Delta t$ is sufficiently small, $H=Ch$, $\Delta T = C\Delta t$ and in the spatial variables $\mathbf{u}$ is of regularity $H^{s}(\Gamma_\Sigma)$, $\lambda$ of regularity $H^{s-1}(\Gamma_C)$, where $s \in (\frac{1}{2}, \frac{3}{2}]$. Then there exist positive constants $C_\sigma'$ and $C_\sigma''$ such that
$$\|\mathbf{u}-\mathbf{u}_{h,\Delta t}\|_{-\frac{1}{2},\frac{1}{2},\sigma, \ast} \leq C_\sigma' h^{s-\frac{1}{2}}$$
and $$\| \lambda -\lambda_{H,\Delta T} \|_{0,-\frac{1}{2}, \sigma} \leq C_\sigma'' h^{s-\frac{1}{2}}.$$
Detailed, but suboptimal convergence rates for general  $\Delta t, \Delta T, h, H$ can be derived from the approximation results in \cite{hp,china}.}
\end{remark}

\section{Algorithmic details}\label{sec:psh}
In this section, to keep the notation as simple as possible and because the numerical results reported in Section \ref{sec;numres} are related to two-dimensional elastodynamics, we will fix $d=2$. 

\subsection{Implementation of the Poincar\'{e}-Steklov operator}
\label{subPS}
The numerical solution of the mixed formulation  \eqref{eq:MixedProblemh} of the contact problem will require the solution of the weak equation
\eqref{eq:WeakMixedVarEqh}, involving the discretized operator $\mathcal{S}_{h,\Delta t}$, at every iteration of the Uzawa algorithm, as described in Algorithm \ref{alg1}. Here, in particular, we are interested in the algorithmic details related to the numerical solution of such an equation.\\
 For the implementation of  the Poincar\'{e}-Steklov operator,
both the symmetric and the non-symmetric formulations \eqref{definition_S}, \eqref{definition_S_symm}, in terms of the layer potentials introduced in Section \ref{sec:Boundaryintegralformulation}, will be used. While they are equivalent for the continuous problem, the resulting discretizations will lead to slightly different numerical results, as we will comment in Section \ref{sec;numres}. 
Moreover, for the stability of the time-step scheme, we consider an energetic weak formulation which involves time derivatives of test functions.\\
In general, for a given right-hand side $\widetilde{\textbf{f}}\in H^{1/2}\left([0,T],H^{-1/2}(\Gamma_\Sigma)\right)^2$ we have to solve a weak problem of the form:\\

\textit{find} $\textbf{u}_{h,\Delta t} \in (X^0_{h,\Gamma_\Sigma}\otimes V^0_{\Delta t})^2$ \textit{such that} 
\begin{equation}\label{eq:Scontinuous}
\langle \mathcal{S} _{h,\Delta t}\textbf{u}_{h,\Delta t},\dot{\textbf{v}}_{h,\Delta t}\rangle_{0,\Gamma_\Sigma,(0,T]} = \langle \widetilde{\textbf{f}},\dot{\textbf{v}}_{h,\Delta t} \rangle_{0,\Gamma_\Sigma,(0,T]}, \quad \quad \forall\,\textbf{v}_{h,\Delta t} \in (X^0_{h,\Gamma_\Sigma}\otimes V^0_{\Delta t})^2.
\end{equation}

We first consider the definition of the Poincar\'{e}-Steklov operator in its symmetric form \eqref{definition_S_symm}: because of its dependence on the inverse operator ${\cal V}^{-1}$, to handle the equation at hand we need to define the dummy variable $\pmb{\psi}_{h,\Delta t}:=\mathcal{V}^{-1}\left(\mathcal{K}+\frac{1}{2}\right)\textbf{u}_{h,\Delta t}$ , belonging to the space $(X^{-1}_{h,\Gamma}\otimes V^{-1}_{\Delta t})^2$.

This translates in solving at every step of the Uzawa algorithm the following system of equations: \\ 

\textit{find} $\textbf{u}_{h,\Delta t}\in (X^0_{h,\Gamma_\Sigma}\otimes V^0_{\Delta t})^2$ \textit{and} $\pmb{\psi}_{h,\Delta t}\in (X^{-1}_{h,\Gamma}\otimes V^{-1}_{\Delta t})^2$ \textit{such that}
\begin{equation}\label{energetic_formh}
\begin{array}{l}
\left\langle \mathcal{V}\pmb{\psi}_{h,\Delta t},\dot{\pmb{\eta}}_{h,\Delta t}\right\rangle_{0,\Gamma,(0,T]} -\left\langle\left(\mathcal{K}+\frac{1}{2}\right)\textbf{u}_{h,\Delta t},\dot{\pmb{\eta}}_{h,\Delta t}\right\rangle_{0,\Gamma,(0,T]}=0, \quad\quad\quad\quad\quad\quad\quad\forall\, \pmb{\eta}_{h,\Delta t}\in(X^{-1}_{h,\Gamma}\otimes V^{-1}_{\Delta t})^2\\[4pt]
\left\langle\left(\mathcal{K}^*+\frac{1}{2}\right)\pmb{\psi}_{h,\Delta t}, \dot{\textbf{v}}_{h,\Delta t}\right\rangle_{0,\Gamma_\Sigma,(0,T]}-\left\langle \mathcal{W}\textbf{u}_{h,\Delta t}, \dot{\textbf{v}}_{h,\Delta t}\right\rangle_{0,\Gamma_\Sigma,(0,T]}=\left\langle\widetilde{\textbf{f}},\dot{\textbf{v}}_{h,\Delta t}\right\rangle_{0,\Gamma_\Sigma,(0,T]}, \forall\,\textbf{v}_{h,\Delta t}\in (X^0_{h,\Gamma_\Sigma}\otimes V^0_{\Delta t})^2.
\end{array}
\end{equation}

Taking into account the definition of the discrete polynomial spaces \eqref{time_set}, \eqref{pol_space_4_discontinuous_f} and \eqref{pol_space_4_continuous_f} , let us now consider the sets $\left\lbrace w^{(\pmb{\psi})}_m \right\rbrace_{m=1}^{M^{(\pmb{\psi})}_h}$ and $\left\lbrace w^{(\textbf{u})}_m \right\rbrace_{m=1}^{M^{(\textbf{u})}_h}$: the first contains the piece-wise linear basis functions of $X^{-1}_{h,\Gamma}$, while the second is composed by the piece-wise linear basis functions of $X^{0}_{h,\Gamma_\Sigma}$. The algebraic reformulation of \eqref{energetic_formh} can be then elaborated choosing the approximate components of the unknowns, in the spaces $X^{-1}_{h,\Gamma}\otimes V^{-1}_{\Delta t}$ and $X^0_{h,\Gamma_\Sigma}\otimes V^0_{\Delta t}$, of the following form:
\begin{equation}\label{approx_of_psi_and_u}
\psi_{i,h,\Delta t}(\textbf{x},t)=\sum_{\ell=0}^{N_{\Delta t}-1}\sum_{m=1}^{M^{(\pmb{\psi})}_h} \psi_{i,\ell,m}w^{(\pmb{\psi})}_m(\textbf{x})v_\ell(t),\:\:u_{i,h,\Delta t}(\textbf{x},t)=\sum_{\ell=0}^{N_{\Delta t}-1}\sum_{m=1}^{M^{(\textbf{u})}_h}u_{i,\ell,m}w^{(\textbf{u})}_m(\textbf{x})r_\ell(t),\:\: i=1,2,
\end{equation}
where the time basis $v_k$ and $r_k$ are defined as
$$v_\ell(t):=H[t-t_\ell]-H[t-t_{\ell+1}], \quad r_\ell(t):=H[t-t_\ell]\frac{t-t_\ell}{\Delta t}-H[t-t_{\ell+1}]\frac{t-t_{\ell+1}}{\Delta t}.$$

These choices for the approximation of $\textbf{u}_{h,\Delta t},\pmb{\psi}_{h,\Delta t}$ leads to the algebraic reformulation of \eqref{energetic_formh} as a linear system $\mathbb{S}\textbf{X}=\widetilde{\textbf{F}}$, having the form 
\begin{equation}\label{system}
\left(\begin{array}{cccc}
\mathbb{S}^{(0)} & \textbf{0} & \cdots & \textbf{0}\\
\mathbb{S}^{(1)} & \mathbb{S}^{(0)} & \cdots & \textbf{0}\\
\vdots & \vdots & \ddots & \vdots\\
\mathbb{S}^{(N_{\Delta t}-1)} & \mathbb{S}^{(N_{\Delta t}-2)} & \cdots & \mathbb{S}^{(0)}
\end{array}\right)
\left(
\begin{array}{c}
\textbf{X}_{(0)}\\
\textbf{X}_{(1)}\\
\vdots\\
\textbf{X}_{(N_{\Delta t}-1)}
\end{array}
\right)=
\left(
\begin{array}{c}
\widetilde{\textbf{F}}_{(0)}\\
\widetilde{\textbf{F}}_{(1)}\\
\vdots\\
\widetilde{\textbf{F}}_{(N_{\Delta t}-1)}
\end{array}
\right),
\end{equation}
where, for all time indices $\ell=0,...,N_{\Delta t}-1$, the unknown vectors are structured as
$$\textbf{X}_{(\ell)}=(\pmb{\Psi}_{(\ell)},\textbf{U}_{(\ell)})=\left(\psi_{1,\ell,1},\ldots,\psi_{1,\ell,M_h^{(\pmb{\psi})}},\psi_{2,\ell,1},\ldots,\psi_{2,\ell,M_h^{(\pmb{\psi})}},u_{1,\ell,1},\ldots,u_{1,\ell,M_h^{(\textbf{u})}},u_{2,\ell,1},\ldots,u_{2,\ell,M_h^{(\textbf{u})}}\right)^\top\,$$
and the right-hand side as
$$
\widetilde{\textbf{F}}_{(\ell)}=\left(0,\cdots,0,\,0,\cdots,0,\,\widetilde{F}_{1,\ell,1},\ldots,\widetilde{F}_{1,\ell,M_h^{(\textbf{u})}},\widetilde{F}_{2,\ell,1},\ldots,\widetilde{F}_{2,\ell,M_h^{(\textbf{u})}}\right)^\top 
\in \mathbb{R}^{2(M_h^{(\pmb{\psi})}+M_h^{(\textbf{u})})}\,,$$
with $\widetilde{F}_{i,\ell,m}:=\left\langle\widetilde{f}_i,w^{(\textbf{u})}_m\dot{r}_\ell\right\rangle_{0,\Gamma_\Sigma,(0,T]}$.\\
The matrix $\mathbb{S}$ shows a lower triangular Toeplitz structure, so that the system \eqref{system} can be solved by backsubstitution. This leads to a  \textit{marching-on-in-time} time stepping scheme, in which the solution of a system of equations involving the first time block $\mathbb{S}^{(0)}$ is required in every time step.

The structure of the linear system \eqref{system} and the expression of the matrix entries as double spatial integrals are due to the analytical time integration taken into account in formula \eqref{energetic_formh}. Let us note that the structure of a generic block $\mathbb{S}^{(\ell)}$ has dimension $2(M_h^{(\pmb{\psi})}+M_h^{(\pmb{\textbf{u}})})\times 2(M_h^{(\pmb{\psi})}+M_h^{(\pmb{\textbf{u}})})$ and assumes the form:
$$\mathbb{S}^{(0)}=\left(\begin{array}{cc}
\mathbb{V}^{(0)} & -(\mathbb{K}^{(0)}+1/2\:\mathbb{M})\\[4pt]
({\mathbb{K}}^{(0)}+1/2\:\mathbb{M})^\top & -\mathbb{W}^{(0)}
\end{array}\right),
\quad
\mathbb{S}^{(\ell)}=\left(\begin{array}{cc}
\mathbb{V}^{(\ell)} & -\mathbb{K}^{(\ell)}\\[4pt]
{\mathbb{K}^{(\ell)}}^\top & \mathbb{W}^{(\ell)}
\end{array}\right),\:\ell=1,...,N_{\Delta t}-1.
$$
With obvious meaning of notation, the blocks $\mathbb{V}^{(\ell)}$, $\mathbb{K}^{(\ell)}$ and $\mathbb{W}^{(\ell)}$ are obtained from the discretization of the integral operators involved in \eqref{energetic_formh} and $\mathbb{M}$ is a mass matrix. For details about the numerical evaluation the entries of these blocks the reader is referred to \cite{thesis_Giulia, AimiJCAM}.\\

We briefly also recall the problem we need to solve in case of non-symmetric formulation of the Poincar\'{e}-Steklov operator \eqref{definition_S}.  With similar arguments used for the symmetric formulation, we obtain the following system of weak boundary equations:\\

\textit{find} $\textbf{u}_{h,\Delta t}\in (X^0_{h,\Gamma_\Sigma}\otimes V^0_{\Delta t})^2$ \textit{and} $\pmb{\psi}_{h,\Delta t}\in (X^{-1}_{h,\Gamma}\otimes V^{-1}_{\Delta t})^2$ \textit{such that}
\begin{equation}\label{energetic_form_non_symmh}
\begin{array}{l}
\left\langle \mathcal{V}\pmb{\psi}_{h,\Delta t},\dot{\pmb{\eta}}_{h,\Delta t}\right\rangle_{0,\Gamma,(0,T]} -\left\langle\left(\mathcal{K}+\frac{1}{2}\right)\textbf{u}_{h,\Delta t},\dot{\pmb{\eta}}_{h,\Delta t}\right\rangle_{0,\Gamma,(0,T]}=0, \quad \forall\,\pmb{\eta}_{h,\Delta t}\in(X^{-1}_{h,\Gamma}\otimes V^{-1}_{\Delta t})^2\\[4pt]
\left\langle\pmb{\psi}_{h,\Delta t}, \dot{\textbf{v}}_{h,\Delta t}\right\rangle_{0,\Gamma_\Sigma,(0,T]}=\left\langle\widetilde{\textbf{f}},\dot{\textbf{v}}_{h,\Delta t}\right\rangle_{0,\Gamma_\Sigma,(0,T]}, \quad \quad \quad \quad \quad \quad \quad \quad \forall\,\textbf{v}_{h,\Delta t}\in (X^0_{h,\Gamma_\Sigma}\otimes V^0_{\Delta t})^2.
\end{array}
\end{equation}

The discrete unknowns $\textbf{u}_{h,\Delta},\pmb{\psi}_{h,\Delta}$ is substituted again with \eqref{approx_of_psi_and_u}, leading to an algebraic form $\mathbb{S}\textbf{X}=\widetilde{\textbf{F}}$ of \eqref{energetic_form_non_symmh}, analogous to the system \eqref{system}, with the only difference that the blocks $\mathbb{S}^{(\ell)}$, again with dimension $2(M_h^{(\pmb{\psi})}+M_h^{(\pmb{\textbf{u}})})\times 2(M_h^{(\pmb{\psi})}+M_h^{(\pmb{\textbf{u}})}),$ are structured as
$$\mathbb{S}^{(0)}=\left(\begin{array}{cc}
\mathbb{V}^{(0)} & -(\mathbb{K}^{(0)}+1/2\:\mathbb{M})\\[4pt]
\mathbb{M}^\top & \textbf{0}
\end{array}\right),
\quad
\mathbb{S}^{(\ell)}=\left(\begin{array}{cc}
\mathbb{V}^{(\ell)} & -\mathbb{K}^{(\ell)}\\[4pt]
\textbf{0} & \textbf{0}
\end{array}\right),\:\ell=1,...,N_{\Delta t}-1.
$$

\subsection{Algebraic formulation  of the Uzawa algorithm}
For the numerical solution of contact problems, the right-hand side $\widetilde{\textbf{f}}$ in \eqref{energetic_formh} and \eqref{energetic_form_non_symmh} specifies depending on the Neumann datum $\textbf{f}$ and the discretized Lagrange multiplier $\lambda_{H,\Delta T}(\textbf{x},t)$ as in \eqref{eq:WeakMixedVarEqh}.\\
Hence, let us consider the vector ${\bf F}=\left({\bf F}_{(\ell)}\right)_{\ell=0}^{N_{\Delta t}-1}$ with
$$
\textbf{F}_{(\ell)}=\left(0,\cdots,0,\,0,\cdots,0,\,F_{1,\ell,1},\ldots,F_{1,\ell,M_h^{(\textbf{u})}},F_{2,\ell,1},\ldots,F_{2,\ell,M_h^{(\textbf{u})}}\right)^\top
\in \mathbb{R}^{2(M_h^{(\pmb{\psi})}+M_h^{(\textbf{u})})}$$
and $F_{i,\ell,m}:=\left\langle f_i,w^{(\textbf{u})}_m\dot{r}_\ell\right\rangle_{0,\Gamma_\Sigma,(0,T]}$.\\ 
For the discretized Lagrange multiplier related to the simulations presented in Section \ref{sec;numres},
in practice we have chosen $H=h$ and $\Delta T = \Delta t$ and we have extended it trivially in vector form, in such a way that its Cartesian components can be expressed as
\begin{equation}\label{approx_of_lambda}
\lambda_{i,h,\Delta t}(\textbf{x},t)=\sum_{\ell=0}^{N_{\Delta t}-1}\sum_{m=1}^{M^{(\pmb{\lambda})}_h}\lambda_{i,\ell,m}w^{(\pmb{\lambda})}_m(\textbf{x})v_\ell(t),\; i=1,2,
\end{equation}
with $\left\lbrace w^{(\pmb{\lambda})}_m \right\rbrace_{m=1}^{M^{(\pmb{\lambda})}_h}$ the piece-wise constant basis functions of $X^{-1}_{h,\Gamma_C}$. 
For each time step $\ell=0,...,N_{\Delta t}-1$, we introduce the vectors\\ 
$$\pmb{\Lambda}_{(\ell)}=\left(\lambda_{1,\ell,1},\ldots,\lambda_{1,\ell,M_h^{(\pmb{\lambda})}},\lambda_{2,\ell,1},\ldots,\lambda_{2,\ell,M_h^{(\pmb{\lambda})}}\right)^\top$$

which are collected in the vector $\pmb{\Lambda}=\left(\pmb{\Lambda}_{(\ell)}\right)_{\ell=0}^{N_{\Delta t}-1}\in \mathbb{R}^{N_{\Delta t}2M_h^{(\pmb{\lambda})}}$.
We denote by $\mathcal{J}_\perp$ the set of those indices $j$ of the vector $\pmb{\Lambda}$ which correspond to the (nontrivial) components of $\pmb{\lambda}_{h,\Delta t}$ normal to $\Gamma$, and by $\mathcal{J}_\parallel$ the set of the remaining indices, 
corresponding to the (trivial) components  of $\pmb{\lambda}_{h,\Delta t}$ tangential to $\Gamma$.
The Uzawa algorithm given in Algorithm \ref{alg1} then translates into the following algebraic procedure where the stopping criterion is specified: 
\begin{algorithm}[H]
\caption{(Algebraic formulation of Uzawa algorithm)}
\label{alg2}
\begin{algorithmic}
\STATE Fix $\rho>0$ and $\epsilon>0$.
\STATE $k=0$,  $\pmb{\Lambda}^{(0)}= \textbf{0}$ and $\pmb{\Lambda}^{(-1)}=\textbf{1}$
\WHILE{$\Vert\pmb{\Lambda}^{(k)}-\pmb{\Lambda}^{(k-1)} \Vert_2/\Vert\pmb{\Lambda}^{(k)} \Vert_2> \epsilon$}
\STATE \textbf{solve} $\quad$ $\mathbb{S}\textbf{X}^{(k)}=\textbf{F}+\mathbb{M}^*\pmb{\Lambda}^{(k)}$
\STATE \textbf{extract}$\quad$
$\textbf{U}^{(k)}$ from $\textbf{X}^{(k)}$
\STATE \textbf{compute} $\quad$ $\pmb{\Lambda}^{(k+1)}= \text{pr}_C  (\pmb{\Lambda}^{(k)}  - \rho \widetilde{\mathbb{M}}(\textbf{U}^{(k)}-\textbf{G}))$. 
\STATE $k \leftarrow k+1$
\ENDWHILE
\end{algorithmic}
\end{algorithm}
where $\text{pr}_C: \mathbb{R}^{N_{\Delta t}2M_h^{(\pmb{\lambda})}}\rightarrow \mathbb{R}^{N_{\Delta t}2M_h^{(\pmb{\lambda})}}$ is understood as the discretized version of the projector considered in Algorithm \ref{alg1}, acting on a 
vector ${\bf W}$ as 
\begin{equation}
(\text{pr}_C\ {\bf W})_j= \left\{
\begin{array}{lr}
\max \,\{W_j,0\}, & j \in \mathcal{J}_\perp\\
0,& j\in \mathcal{J}_\parallel
\end{array}\,.
\right.
\end{equation}
The matrix $\mathbb{M}^*\in \mathbb{R}^{N_{\Delta t}\,2(M_h^{(\pmb{\psi})}+M_h^{(\textbf{u})})\times N_{\Delta t}2M_h^{(\pmb{\lambda})}}$ represents the projection of discretized Lagrange multipliers on the discretized displacement space, trivially extended in such a way that the vector  $\mathbb{M}^*\pmb{\Lambda}^{(k)}$ matches the length of vector ${\bf F}$;
the vector $\textbf{G}$ contains the coefficients of the interpolant of $g$ in $X^0_{h,\Gamma_C}\otimes V^0_{\Delta t}$, suitably trivially extended to match the length of vector $\textbf{U}^{(k)}$, while 
$\widetilde{\mathbb{M}} \in \mathbb{R}^ {N_{\Delta t}2M_h^{(\pmb{\lambda})}\times N_{\Delta t}2M_h^{(\textbf{u})}}$ is the mass matrix representing the interplay between the finite dimensional spaces of discretized Lagrange multipliers and discretized displacements.

\section{Numerical results}
\label{sec;numres}

In the following we present numerical experiments which apply the proposed methods to the approximation of two-dimensional ($d=2$) dynamic Signorini contact problems. Different boundary geometries (slit, square boundary and circumference) and contact conditions are considered, illustrating the stability and efficiency of the approach.\\
Unless otherwise specified, the gap function in contact conditions \eqref{contactbc} and the mass density are respectively set to be $g=0$ and $\varrho=1$.\\

To study the convergence of the numerical solutions, we consider the error in energy norm. Given a solution vector $\textbf{X}^{(k)}$ to the linear system $\mathbb{S}\textbf{X}^{(k)}=\textbf{F}+\mathbb{M}^*\pmb{\Lambda}^{(k)}$ in Algorithm \ref{alg2},  once the stopping criterion in this algorithm is satisfied, we compute the square of the energy norm as $\textbf{X}^{(k)\top} \mathbb{S}\textbf{X}^{(k)}$. In the examples where the exact solution is unknown, 
 we compare these energies with an 
 extrapolated benchmark value. 

\subsection{Example 1: non-symmetric and  symmetric formulation of the Poincar\'{e}-Steklov operator}

In the time interval $[0,T]=[0,2]$, let us consider the square centred in the origin of the fixed orthogonal reference system and with sides of unitary length, i.e. $\Omega=[-0.5,0.5]^2$. We denote the square boundary by $\Gamma:=\partial \Omega=\Gamma_b\cup \Gamma_r\cup \Gamma_t\cup \Gamma_l$, with obvious meaning of notation for the bottom, right, top and left sides. The elastodynamics velocities are fixed as $c_S=1/\sqrt{2}$ and $c_P=1$: in this way the problem at hand results decoupled w.r.t. the horizontal and vertical directions \cite{Eringen1975}.

Here the partition of the boundary is trivial: $\Gamma=\Gamma_N$. Ipso facto, we are dealing with a dynamics of interior elastic wave propagation with Neumann boundary conditions, here determined by the vertical datum $\textbf{f}=(0,f_2)$, with

$$
f_2(\textbf{x},t)=\left\{ \begin{array}{c l}
0, & \textbf{x}\in\Gamma_l\cup\Gamma_r\\
1, & \textbf{x}\in\Gamma_t\\
-2\,H(t-1), & \textbf{x}\in\Gamma_b
\end{array}\right. .
$$
Since $\Gamma_C=\emptyset$ the implementation of Algorithm \ref{alg2} is trivial, converging in fact in one iteration. For this assigned Neumann datum, the displacement solution of \eqref{navierlame} is analytically known \cite{Eringen1975} and reads:
\begin{equation}\label{analytical_ex_1}
 \begin{array}{l}
 u_1 (\textbf{x},t)=0,\\
 u_2 (\textbf{x},t)= H(c_P\, t-x_2+0.5)\,(c_P\, t-x_2+0.5)-H(c_P\, t-x_2-1.5)\,(c_P\, t-x_2-1.5)\,.
\end{array}
\end{equation}

The related discrete energetic weak boundary integral problem \eqref{energetic_formh} or \eqref{energetic_form_non_symmh} is solved setting uniform meshes in space and time, generated by fixing space and time steps of equal value: $h=\Delta t$.\\

Making use of the non-symmetric formulation of the Poincar\'{e}-Steklov operator \eqref{definition_S} and setting $h=\Delta t=0.05$, we obtain a good approximation $u_{1,h,\Delta t}$ of the trivial horizontal displacement. The approximation of the vertical displacement, in Figure \ref{Figure1}(a), is instead more intriguing. The comparison between the numerical solution $u_{2,h,\Delta t}$ and the corresponding analytical one in the midpoint of the top, left (right) and bottom sides of the square boundary is optimal, given that the respective three lines are overlapped. In Figure \ref{Figure1}(b) the surface of $u_{2,h,\Delta t}$ on the entire space-time domain is reported.\\
\begin{figure}[h!]
\centering
\subfloat[]{\includegraphics[scale=0.54]{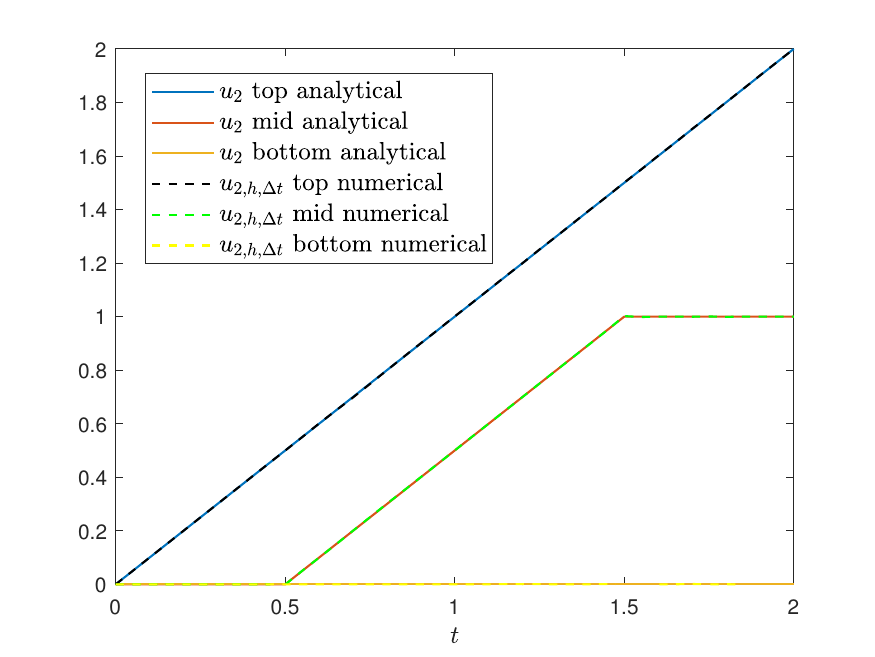}}
\subfloat[]{\includegraphics[scale=0.54]{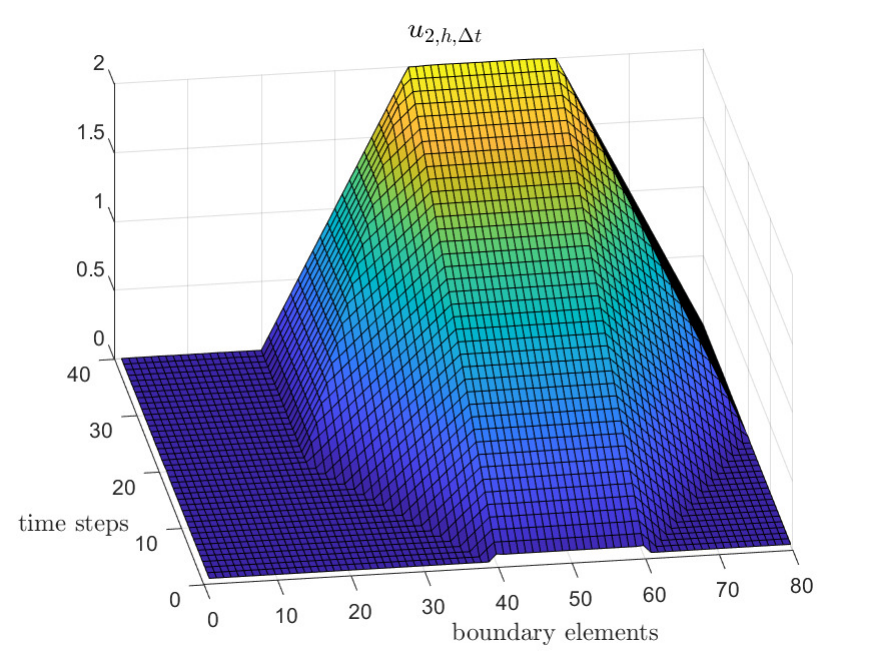}}
\caption{Comparison between $u_{2,h,\Delta t}$ and the corresponding analytical solution (a) and $u_{2,h,\Delta t}$ surface on the entire space-time domain (b).}
\label{Figure1}
\end{figure}

Concerning Figure \ref{Figure2}, results about the error analysis are collected, considering a resolution of the problem by both non-symmetric and symmetric couplings (see \eqref{definition_S} and \eqref{definition_S_symm}). This analysis is partially summarized in Figure \ref{Figure2}(a), which shows the space-time error in $L^2((0,T)\times \Gamma)$-norm between the discrete components of the solution $u_{1,h,\Delta t},u_{2,h,\Delta t}$  and the analytical ones reported in \eqref{analytical_ex_1}. For both types of coupling, the errors behave as $\mathcal{O}(h^{1.5})$.\\
Considering the same levels of space-time discretization, Figure \ref{Figure2}(b) collects the squared energy error committed with both non-symmetric and symmetric couplings of the problem: this error is characterized by a linear linear slope, behaving in fact as $\mathcal{O}(h)$.
\begin{figure}[h!]
\centering
\subfloat[]{\includegraphics[width=1\textwidth]{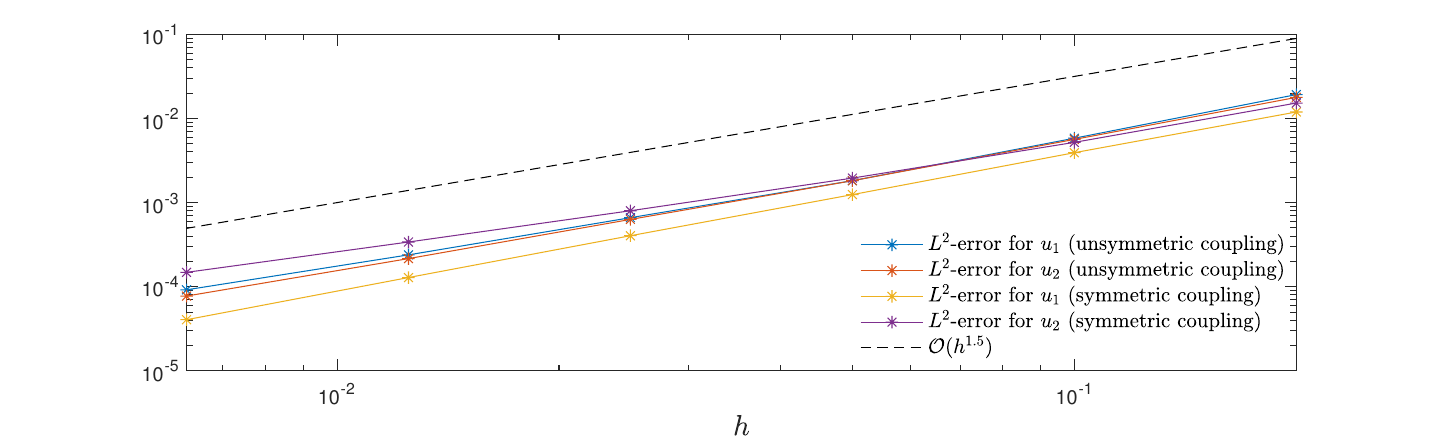}}
\vskip 0.05cm
\subfloat[]{\includegraphics[width=1\textwidth]{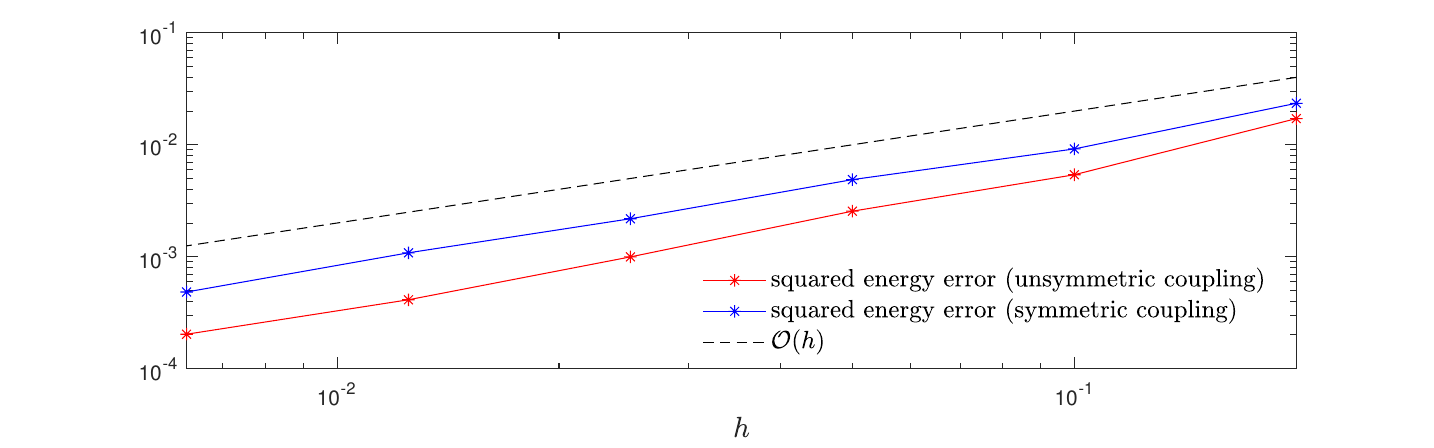}}
\caption{$L^2((0,T)\times \Gamma)$ space-time error committed w.r.t to the analytical solution \eqref{analytical_ex_1} (a) and squared error in energetic norm (b).}
\label{Figure2}
\end{figure}

\subsection{Example 2: dynamic contact problem on a segment}
Here we consider the slit $\Gamma=\left\lbrace (x_1,0)\:\vert\:x_1\in[-0.5,0.5]\right\rbrace$ with a contact region identified by the sub-interval $\Gamma_C=\left\lbrace (x_1,0)\:\vert\:x_1\in[-0.2,0.2]\right\rbrace$. For this example we set $\Gamma_{\Sigma}=\Gamma_C$ and, since on $\Gamma_D=\Gamma \setminus \Gamma_{\Sigma}$ the datum $\textbf{u}=0$ is imposed, the slit results partially fixed on both sides and the dynamics purely depends on an assigned contact force $\textbf{f}$. The considered wave speeds are $c_S=1, c_P=2$ and the time interval of analysis is $[0,T]=[0,3]$.\\
Two different vertical contact forces \textbf{f} have been considered on $\Gamma_C$, fixing for both a null horizontal impulse $f_1=0$: 
$$
f_2(t)=\left\{ \begin{array}{l r}
-\sin(8\pi t) H(0.1875-t)+H(t-0.1875) & {\rm for\, Test\, 1}, \\
-\sin(16\pi t) H(0.21875-t)+H(t-0.21875) & {\rm for\, Test\, 2}. \\
\end{array}\right.
$$
Note that the slit $\Gamma$  considered in this Example does not quite satisfy the assumptions described in Section \ref{mechsetup}. It approximates the contact between an infinite upper half-space and a rigid lower half-space. By truncating the infinitely extended interface to a finite segment $\Gamma$, we incur geometric truncation errors in the numerical discretizations of $\mathcal{S}$.\\
The algebraic Uzawa Algorithm \ref{alg2} 
is used with a tolerance $\epsilon=10^{-5}$, while the parameter for the projector $\textrm{pr}_C$ has been chosen as $\rho=10^5$. The discretization space-time steps can assume different values but maintaining always the ratio $h/\Delta t=2$. The numerical results are compared against an extrapolated benchmark for the symmetric, respectively non-symmetric coupling. \\
Setting the space step $h=0.025$, in Figure \ref{Figure3} we report the corresponding vertical component of displacement $u_{2,h,\Delta t}$ in the midpoint of the contact region $\Gamma_C$, together with the vertical component of the contact force $\textbf{f}$ taken into account. This force, when positive, causes a not trivial displacement solution on $\Gamma_C$, since the contact region is pulled up by the contact force. Otherwise, when the vertical contact force is negative, the resultant vertical displacement decreases to $0$ and the contact region turns to its original flat form since the vertical displacement cannot become negative. In this Figure, $\lambda_{2,h,\Delta t}$ is also shown.\\
With reference to Test 1, having fixed the same discretization parameters as before, in Figure \ref{Figure3_1} the deformation of the contact region is represented through the corresponding approximated horizontal and vertical components of ${\bf u}_{h,\Delta t}$ at three time instants when the prescribed vertical force is positive. With reference to Test 2, Figure \ref{Figure3_3} shows the approximated $u_{1,h,\Delta t}$ and $u_{2,h,\Delta t}$ at three time instants at the beginning of the simulation. Looking in particular at the vertical component, at first the displacement remains trivial, then it starts assuming positive values, before decreasing, with more evidence towards the endpoints of the contact region, when the vertical contact force becomes negative again.\\ 
In Figure \ref{Figure3_2}, the approximation $\lambda_{2,h,\Delta t}$ is shown, both for Test 1 and Test 2, on the global space-time domain $\Gamma_C\times[0,3]$, having set $ h=0.025$. As one can see, in Figure \ref{Figure3_2} (a), related to Test 1, the non-trivial part of the surface corresponds to a nonzero  contact force for $t\in [0, 0.125]$ to compensate the negative forcing term $f_2(t)$ applied uniformly on $\Gamma_C$. The shape of $\lambda_{2,h,\Delta t}$ surface given by Test 2, Figure \ref{Figure3_2} (b), presents similar features in the initial phase, but, at the second negative pulsation of the contact force,  $\lambda_{2,h,\Delta t}$ is non-trivial just towards the endpoints of $\Gamma_C$, since in the middle of the segment the vertical displacement has a decreasing, but still positive value.\\ 
Let us remark that both ${\bf u}_{h,\Delta t}$ and $\lambda_{h,\Delta t}$ are retrieved from the final Uzawa iteration of Algorithm \ref{alg2}, once the stopping criterion is satisfied. 
\begin{figure}[h!]
\centering
\subfloat[]{\includegraphics[scale=0.54]{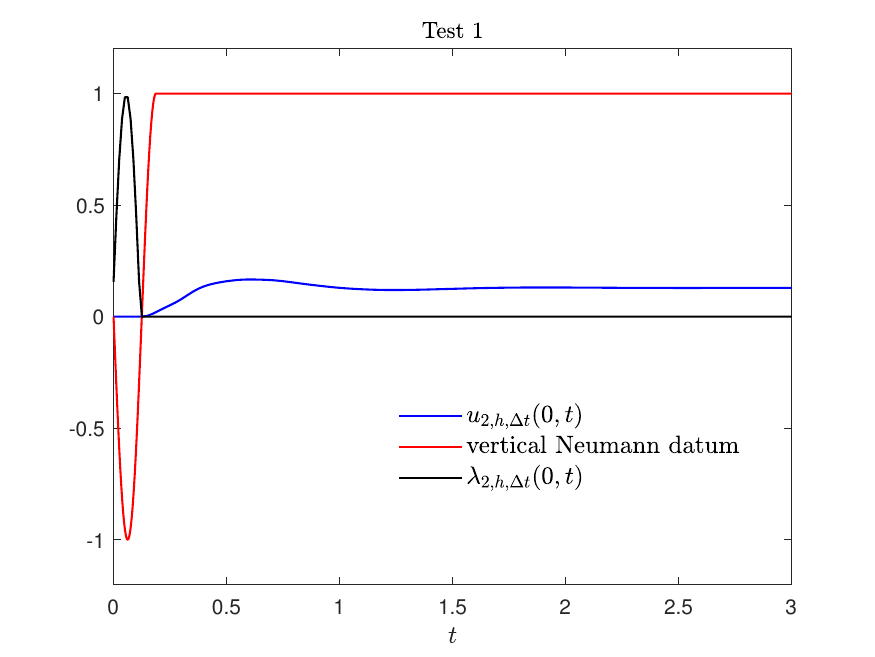}}
\subfloat[]{\includegraphics[scale=0.54]{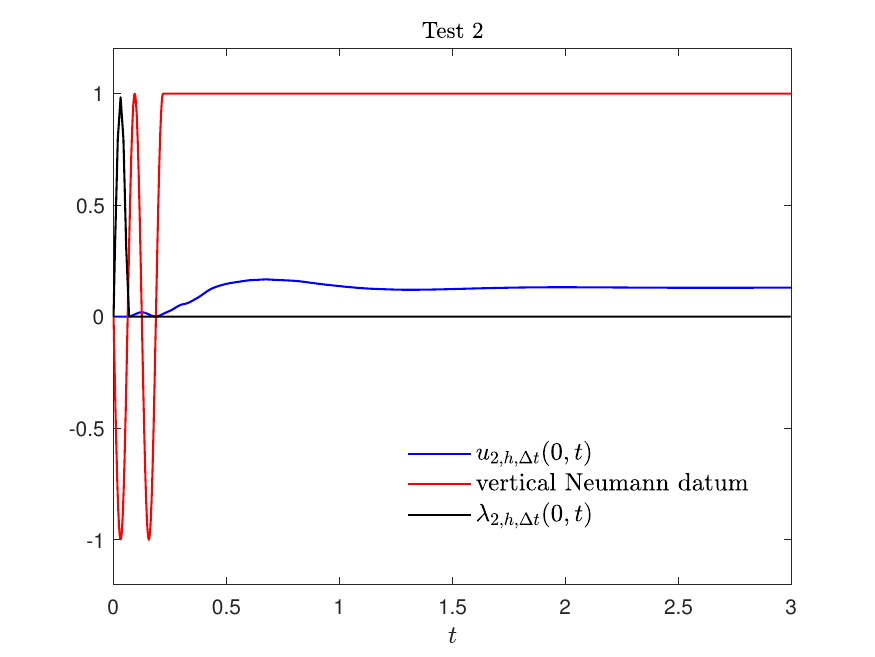}}
\caption{Time history of $u_{2,h,\Delta t}$ in the midpoint of $\Gamma_C$ for Test 1 (left), Test 2 (right) together with the corresponding vertical contact force and $\lambda_{2,h,\Delta t}$.}
\label{Figure3}
\end{figure}
\begin{figure}[h!]
\includegraphics[scale=0.55]{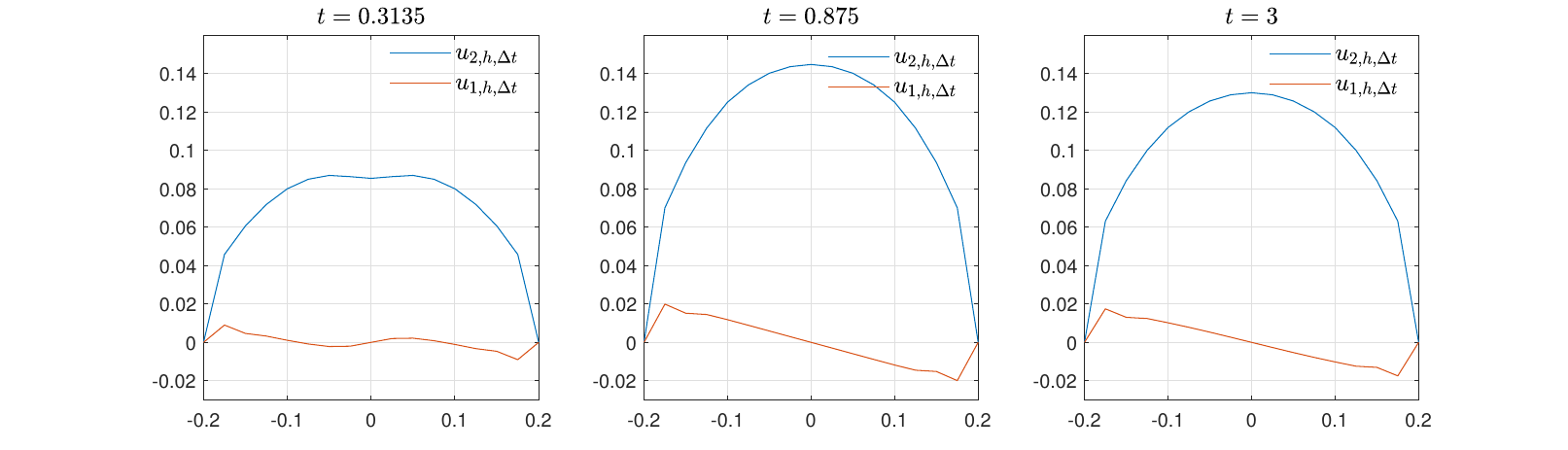}
\caption{Vertical and horizontal deformation of $\Gamma_C$ at three different time instants (Test 1).}
\label{Figure3_1}
\end{figure}
\begin{figure}[h!]
\includegraphics[scale=0.55]{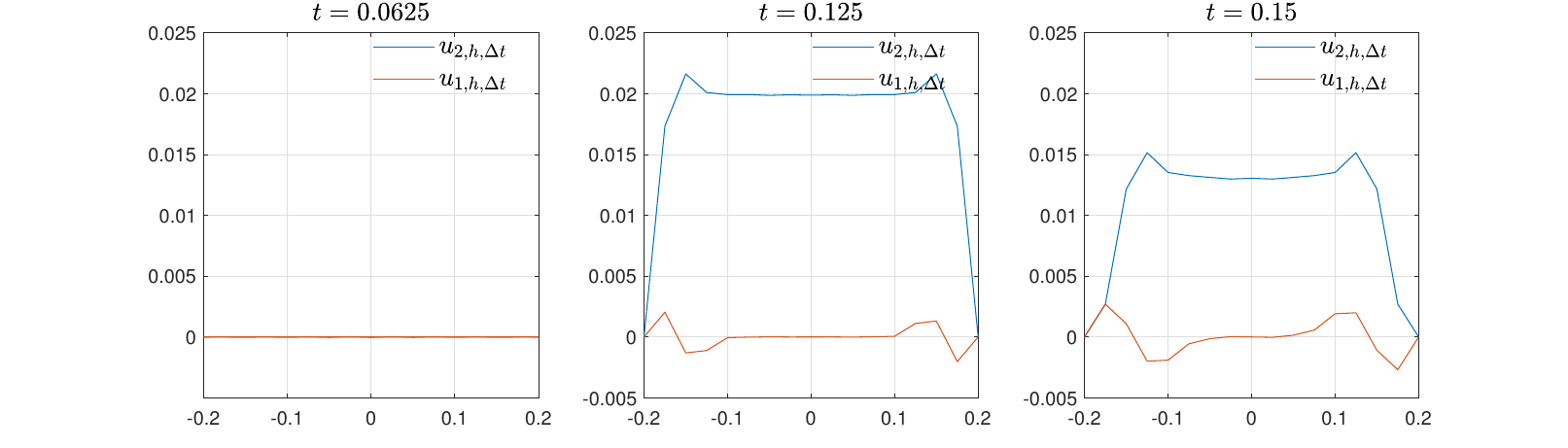}
\caption{Vertical and horizontal deformation of $\Gamma_C$ at three different time instants (Test 2).}
\label{Figure3_3}
\end{figure}
\begin{figure}[h!]
\centering
\subfloat[]{\includegraphics[scale=0.53]{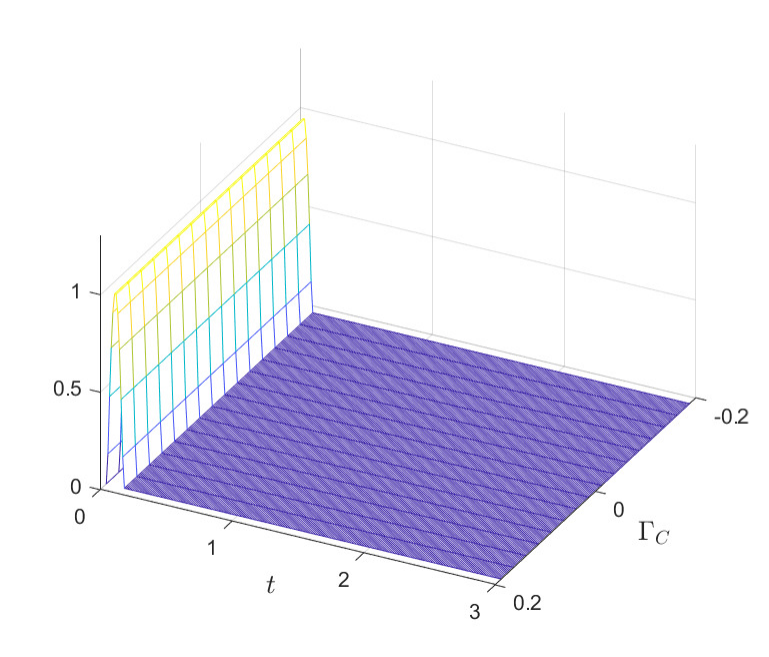}}
\:
\subfloat[]{\includegraphics[scale=0.53]{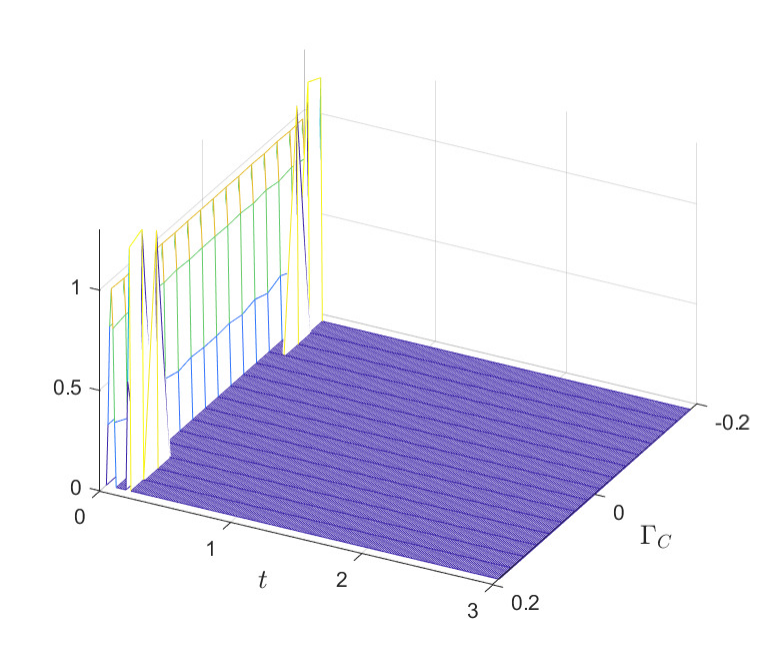}}
\caption{Surface of $\lambda_{2,h,\Delta t}$ for Test 1 (a) and Test 2 (b) on the entire space-time domain $\Gamma_C\times[0,3]$.}
\label{Figure3_2}
\end{figure}\\
The corresponding error analysis for this experiment is reported in Figure \ref{Figure4}: the squared error in energy norm is indicated distinguishing between Test 1 and Test 2 (Figure \ref{Figure4}(a)) and (Figure \ref{Figure4}(b)) and for a subsequent refinement of the $h$ step. For both experiments the straight line in the plots correspond to convergence of order $\mathcal{O}(h)$. {Following Remark \ref{convremark}, the convergence rate is compatible with a solution $\mathbf{u}$ of  regularity $H^{1-\varepsilon}(\Gamma_\Sigma)$ in the spatial variables.}
\begin{figure}[h!]
\centering
\subfloat{\includegraphics[scale=0.5]{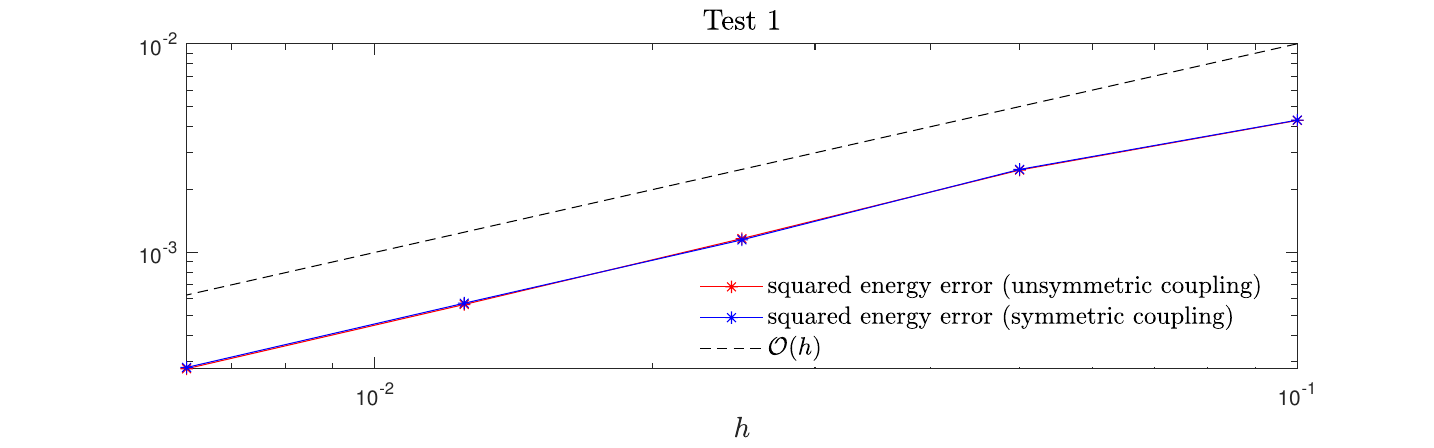}}
\vskip 0.05cm
\subfloat{\includegraphics[scale=0.5]{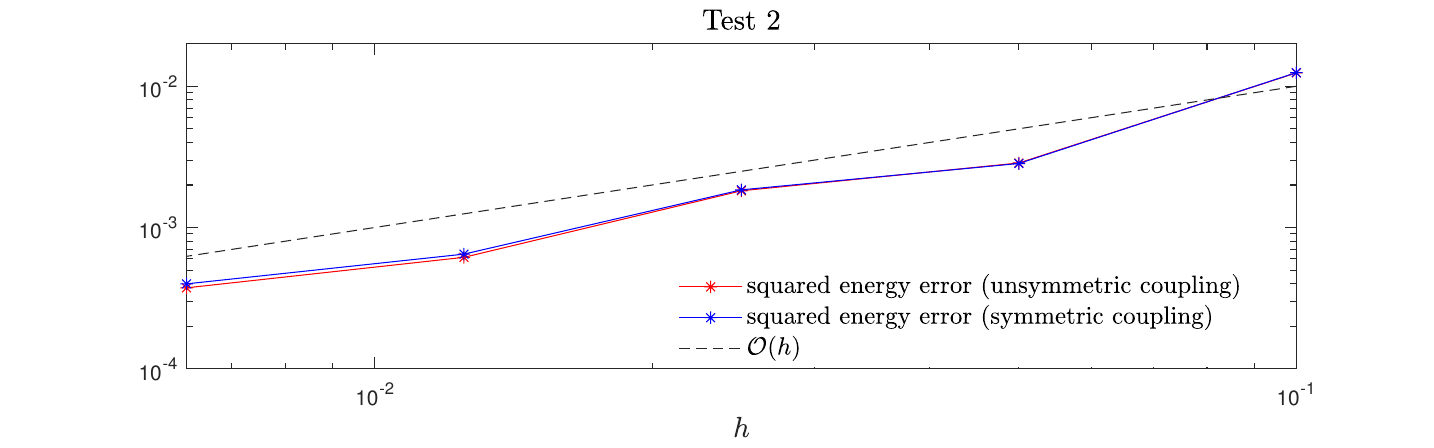}}
\caption{Squared error in energy norm related to the Example 2 (Test 1 top, Test 2 bottom).}
\label{Figure4}
\end{figure}

\subsection{Example 3: dynamic contact problem in non-flat geometry}

For this example, we take into account again the square $\Omega=[-0.5,0.5]^2$, with $\Gamma_N=\Gamma_t\cup \Gamma_r$ and the same physical and discretization parameters considered in the Example 1. Let us define the contact region as $\Gamma_C=\Gamma_b\cup \Gamma_l$. The dynamic will be investigated using two contact forces, always fixing $\textbf{f}=\textbf{0}$ on $\Gamma_C$, $f_2(\textbf{x},t)=-0.1H[t]$ on $\Gamma_t$, $f_2(\textbf{x},t)=0$ on $\Gamma_r$ and 
$$
f_1(\textbf{x},t)=\left\{ \begin{array}{c r}
0 \;{\rm on} \, \Gamma_t\cup \Gamma_r, & {\rm for\, Test\, 1} \\
-0.1H[t] \,{\rm on} \, \Gamma_r;\; 0\, {\rm on}\, \Gamma_t, & {\rm for\, Test\, 2} \\
\end{array}\right..
$$
Figure \ref{Figure5} perfectly shows how the two types of contact forces interact with the square: in Test 1 this is pushed down from the top side, while for Test 2 the square is pressed both from the top and from the left side.\\

The Uzawa algorithm has been set up with an exit test 
fixing $\epsilon=10^{-5}$, while the parameter for the projector has been fixed as $\rho=10^2,10^3,10^4$ for decreasing mesh parameters.\\
\begin{figure}[h!]
\centering
\subfloat[]{\includegraphics[scale=0.6]{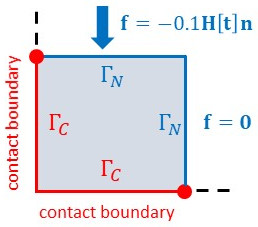}}
\quad\quad\quad\quad
\subfloat[]{\includegraphics[scale=0.6]{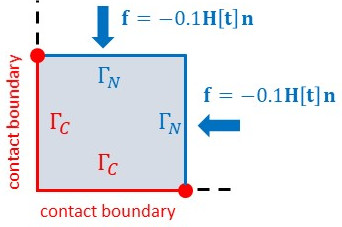}}
\caption{Schematic representation of the two types of force having role in Test 1 and Test 2, respectively (a) and (b), in Example 3.}
\label{Figure5}
\end{figure}

Using the non-symmetric formulation of $\mathcal{S}$ given in \eqref{definition_S}, having fixed $h=\Delta t=0.05$ we obtain the approximate displacements shown in Figure \ref{Figure6} related to Test 1 and in Figure \ref{Figure7} for Test 2.
\begin{figure}
\includegraphics[scale=0.53]{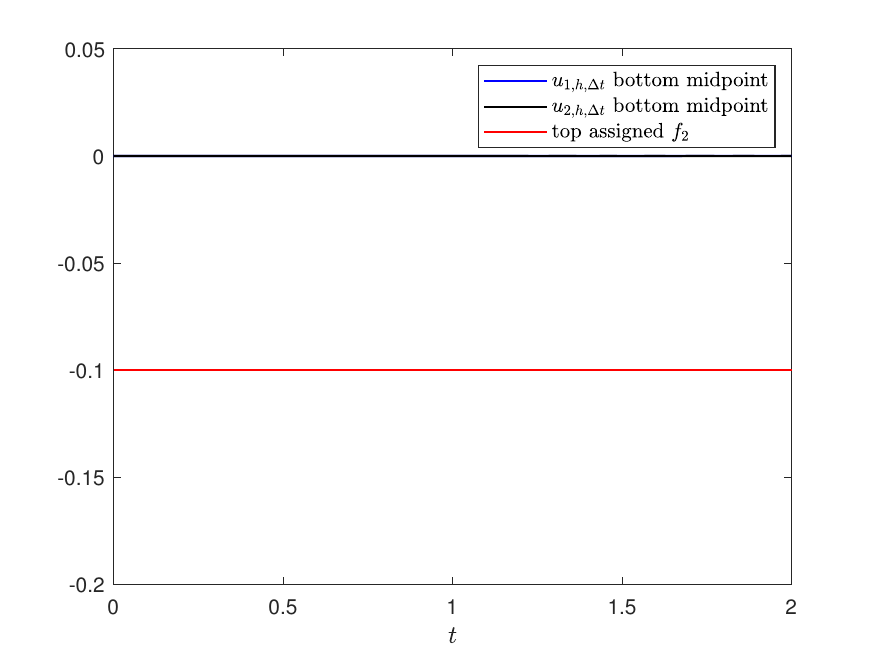}
\includegraphics[scale=0.53]{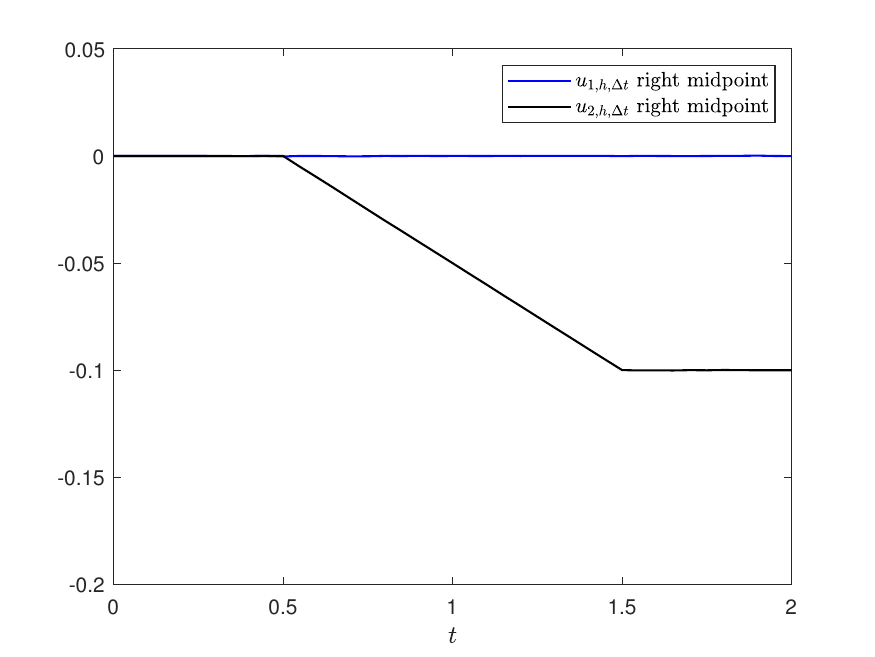}
\vskip 0.1cm
\includegraphics[scale=0.53]{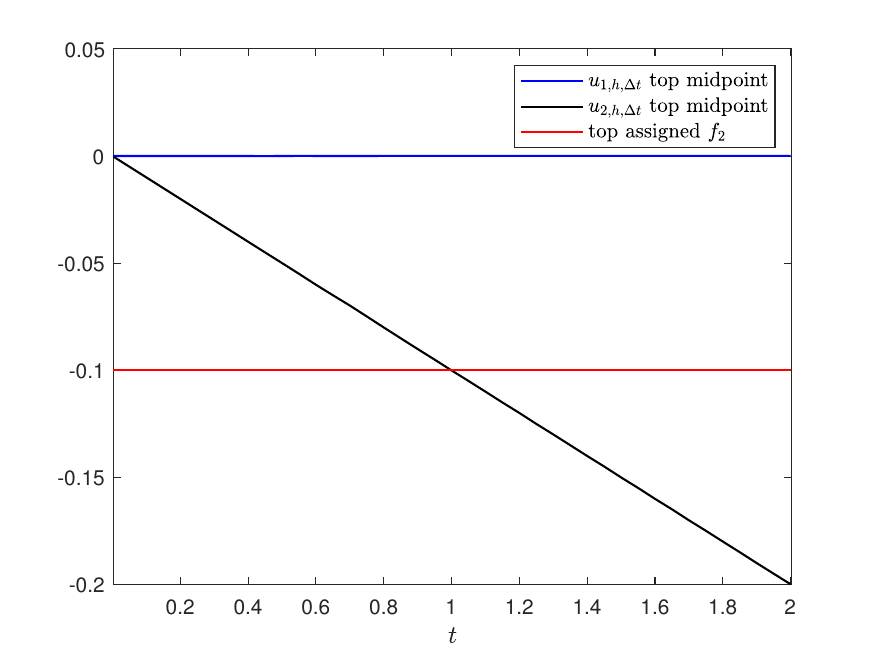}
\includegraphics[scale=0.53]{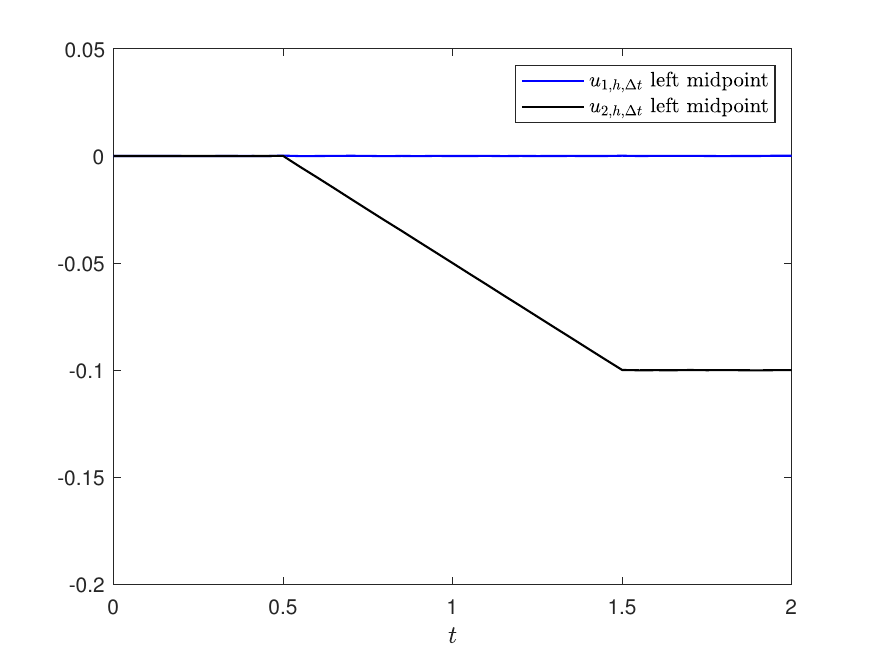}
\caption{Time history of $\textbf{u}_{h,\Delta t}$ components in the midpoint of $\Gamma$ sides for Test 1, together with the corresponding vertical Neumann datum.}
\label{Figure6}
\end{figure}
\begin{figure}
\includegraphics[scale=0.53]{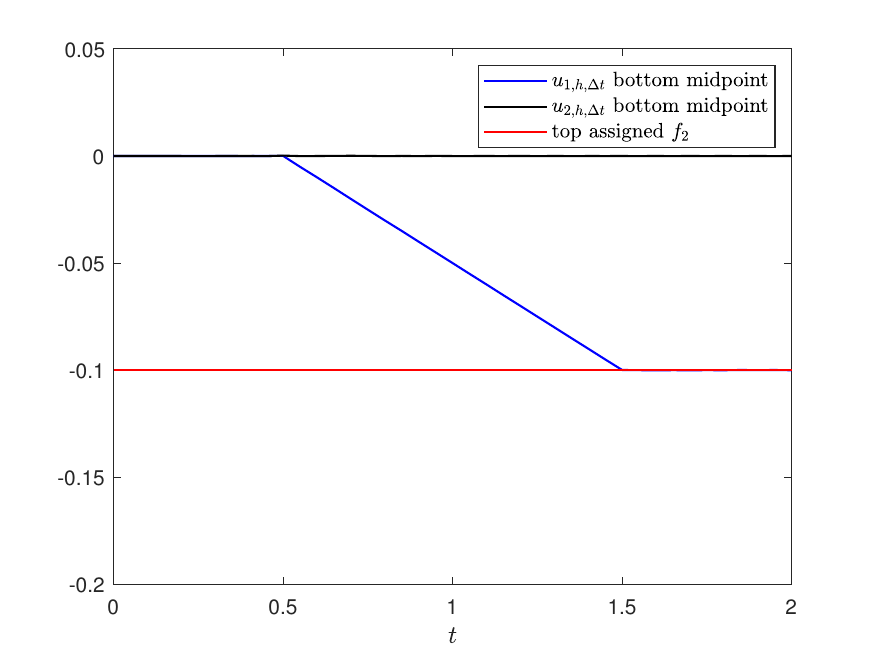}
\includegraphics[scale=0.53]{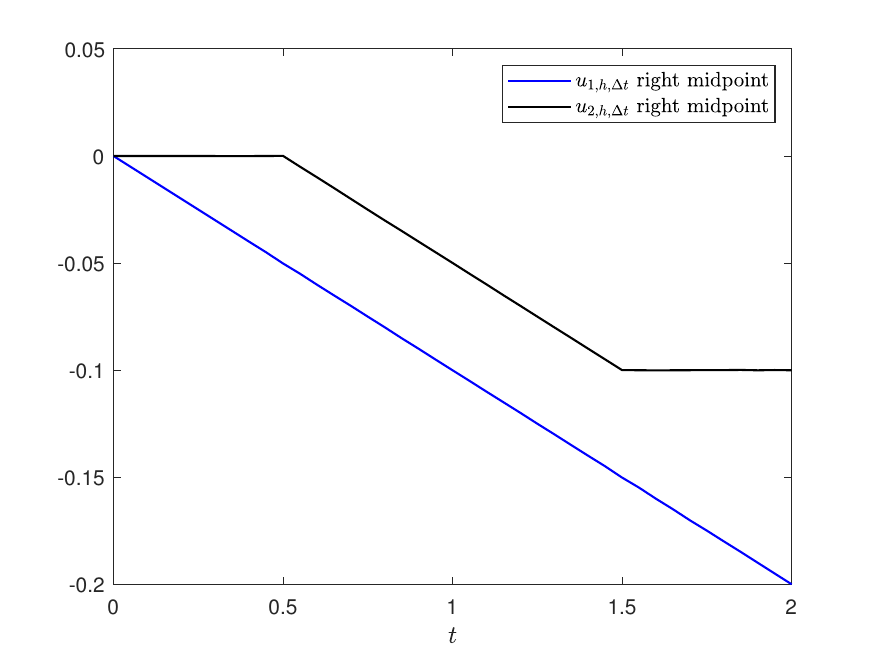}
\vskip 0.1cm
\includegraphics[scale=0.53]{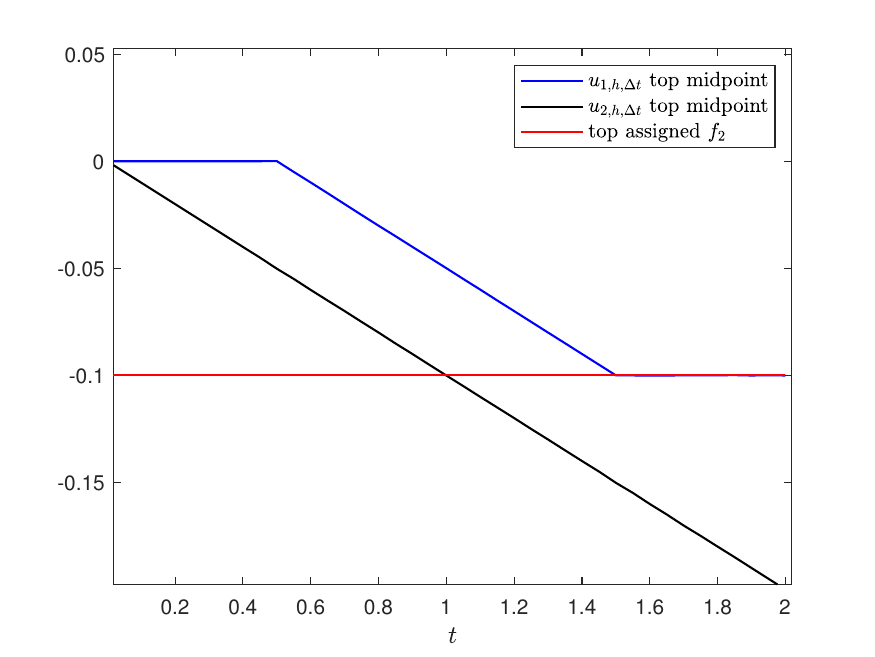}
\includegraphics[scale=0.53]{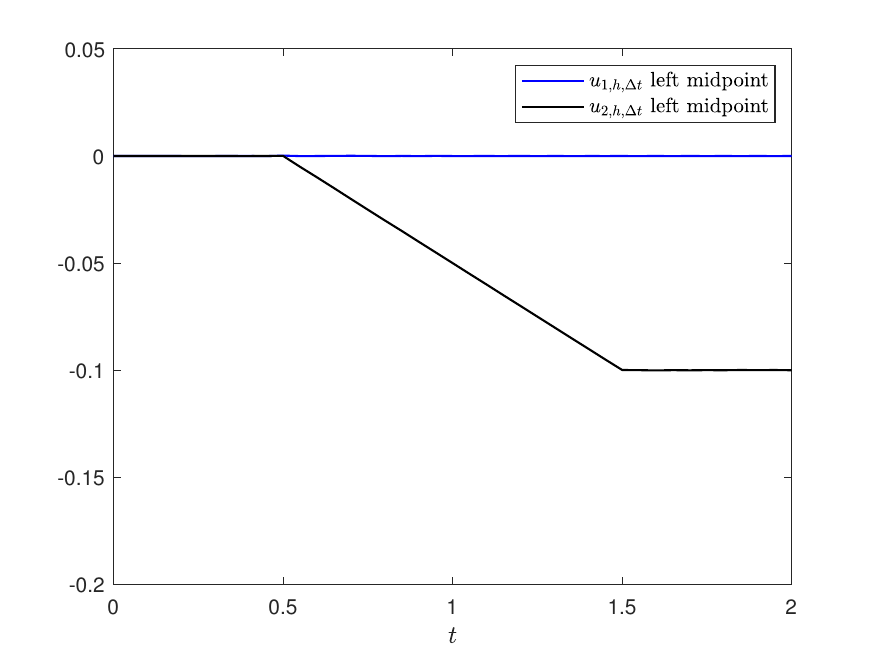}
\caption{Time history of $\textbf{u}_{h,\Delta t}$ components in the midpoint of $\Gamma$ sides for Test 2, together with the corresponding vertical Neumann datum.}
\label{Figure7}
\end{figure}
Figure \ref{Figure8} shows the squared energy error decay for $h$-refinements. For both Test 1 and Test 2, the considered representations \eqref{definition_S} and \eqref{definition_S_symm} of the Poincar\'{e}-Steklov operator lead to a linear slope for the squared error, parallel to the line $\mathcal{O}(h)$. {As in Example 2, the convergence rate is compatible with a solution $\mathbf{u}$ of  regularity $H^{1-\varepsilon}(\Gamma_\Sigma)$ in the spatial variables.}\\
\begin{figure}[h!]
\centering
\subfloat[]{\includegraphics[scale=0.7]{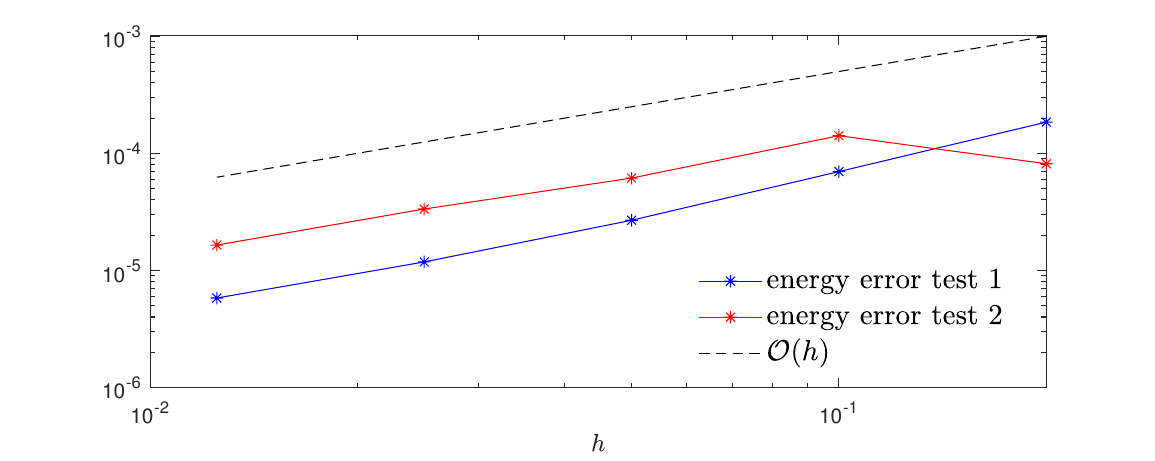}}
\vskip 0.05cm
\subfloat[]{\includegraphics[scale=0.7]{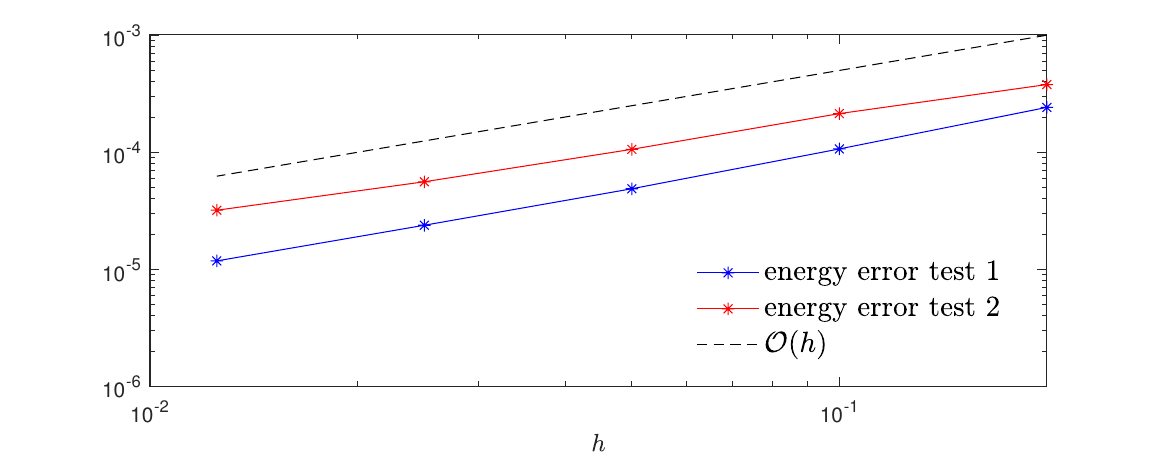}}
\caption{Squared error in energy norm related to the Example 3: figures (a) and (b) include errors obtained by non-symmetric and symmetric coupling of the problem.}
\label{Figure8}
\end{figure}

{
With the aim of studying the conservation of the energy after multiple bounces, let us consider the configuration depicted in Figure \ref{Figure5} (a). The elastic properties take the same values as above, but we change the applied Neumann vertical datum at the top side of the square boundary, namely
$$f_2({\bf x},t)=0.1\;\sum_{k=0}^8 (-1)^{k}\left(H[t-0.1\, k]-H[t-0.1\,(k+1)]\right),\quad {\bf x}\in \Gamma_t.$$
For fixed $h=\Delta t=0.0125$, Figure \ref{Figure12} shows the time history of the displacements components in the midpoints of the top (a) and bottom (b) sides of the square boundary, together with $f_2({\bf x},t)$ in the same points. In particular, in Figure \ref{Figure12} (b), after a delay due to the time for the elastic wave to reach the bottom of the square from the top side, multiple bounces against the contact region are highlighted. No spurious oscillations are observed even after multiple contacts. For the same discretization parameters, Figure \ref{Figure13} (a) shows the energy of the elastic system as a function of time. The approximately linear increase for times $<1$ and constant behavior for $t>1$ agrees with the depicted behavior of the solution and the applied forces. Figure \ref{Figure13} (b) zooms in around the stationary behavior. It shows that the energy is conserved up to a relative error of less than $10^{-4}$  for $t>1$. 
\begin{figure}[h!]
\centering
\subfloat[]{\includegraphics[scale=0.5]{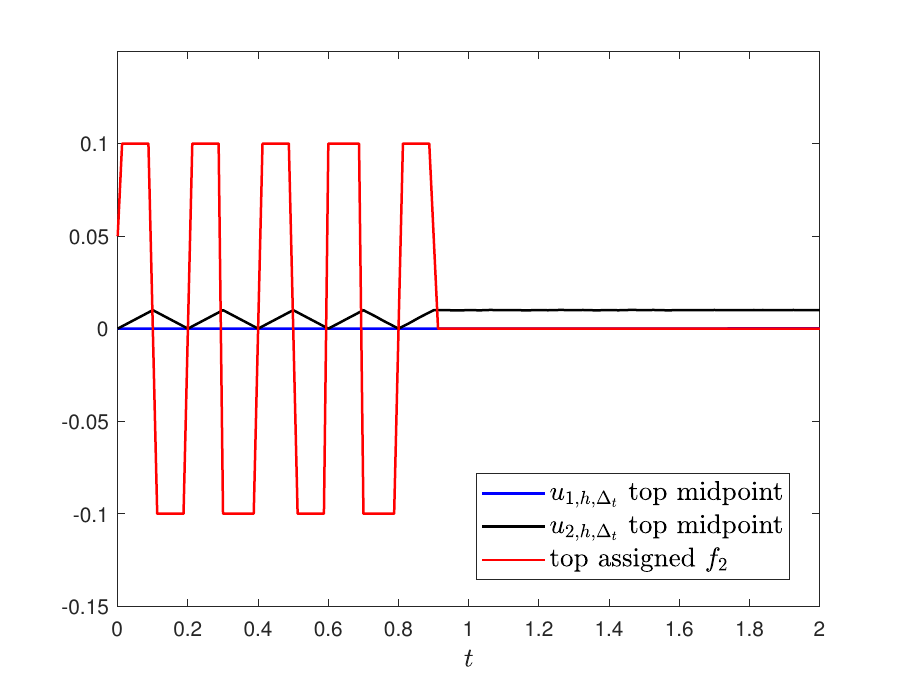}}
\:
\subfloat[]{\includegraphics[scale=0.5]{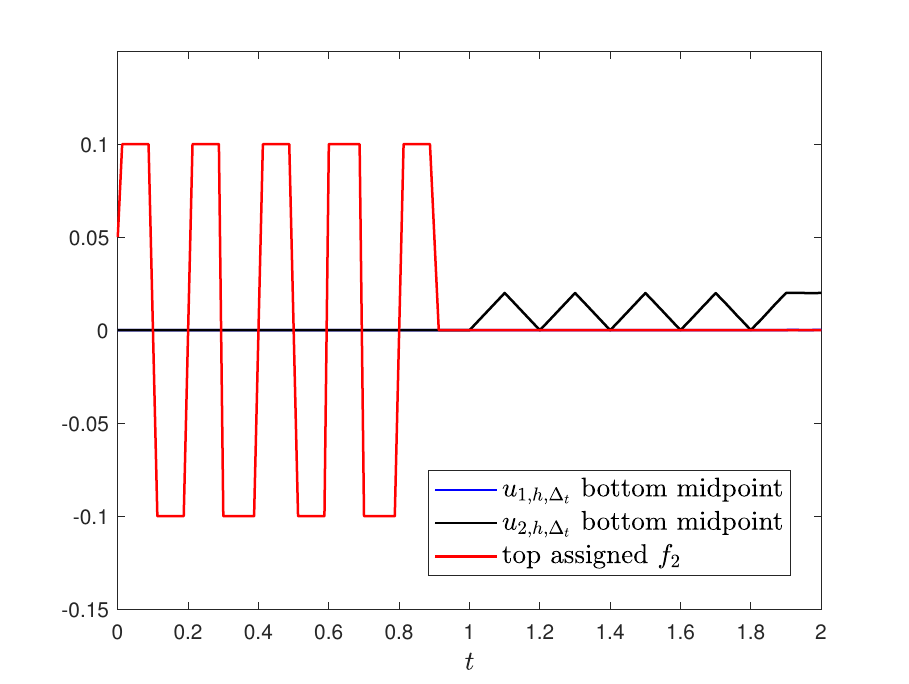}}
\caption{{Displacement as a function of time in the  midpoints of the top (a) and bottom (b) sides, together with the assigned vertical Neumann datum $f_2$.}}
\label{Figure12}
\end{figure}
\begin{figure}[h!]
\centering
\subfloat[]{\includegraphics[scale=0.5]{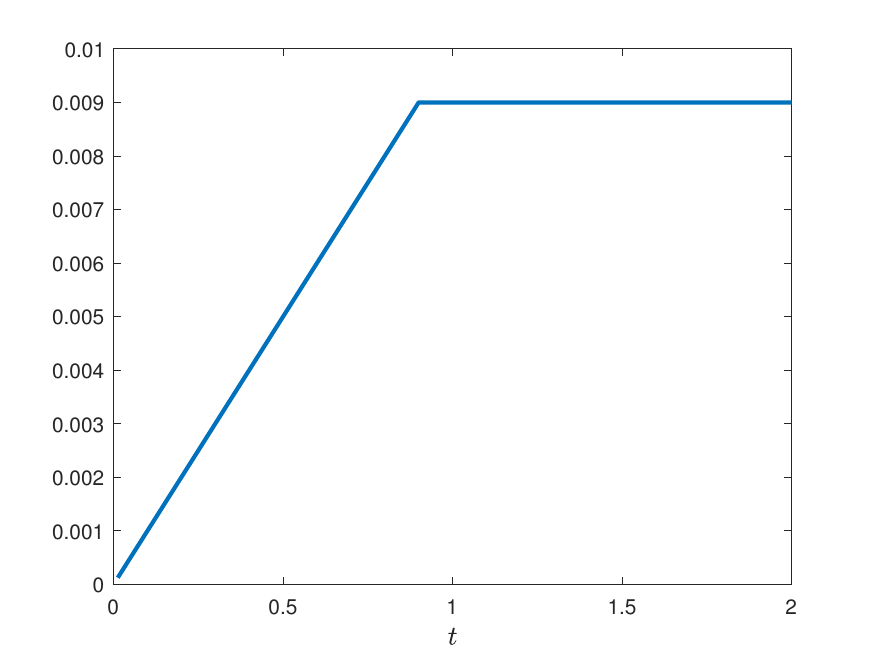}}
\quad
\subfloat[]{\includegraphics[scale=0.5]{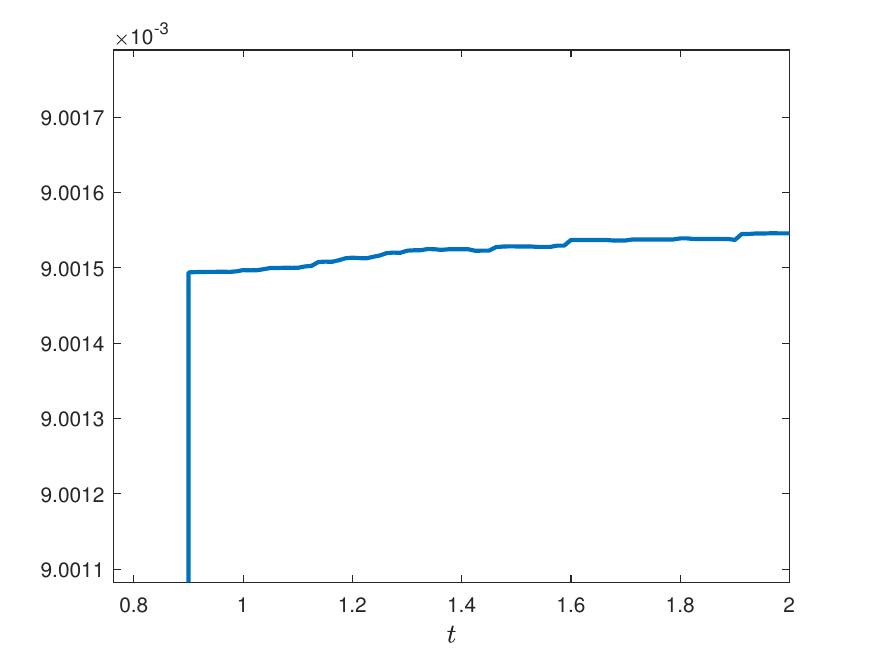}}
\caption{{Energy of the numerical solution as a function of time (a), zoomed-in plot for $t>0.8$ (b).}}
\label{Figure13}
\end{figure}
}

\subsection{Example 4: dynamic contact problem in a circular geometry}

In the last example we take into account the disk $\Omega=\left\lbrace (x,y)^\top \: :\: x^2+y^2\leq 0.2 \right\rbrace$. The contact region is given by the entire boundary of the disk, namely $\Gamma=\Gamma_C$. The fundamental elastodynamic velocities are set as $c_S=1$ and $c_P=2$. We have no Neumann data assigned and the contact force is null, but the dynamic is not trivial since we consider in \eqref{contactbc} the gap function 
$$g(t)=(4\:\sqrt{1-1.5(t-0.5)^2}-4)\: H[1.3-t]-3.2\:H[t-1.3],$$
modeling a contact that push up the disk from the bottom in the time interval $[0.245,0.755]$. A schematic representation of the dynamic contact is reported in Figure \ref{Figure9}.

\begin{figure}
\centering
\includegraphics[scale=0.6]{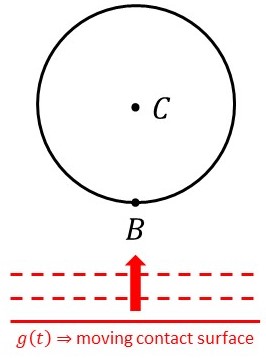}
\caption{Schematic representation of disk and contact boundary: its movement in time is governed by the given gap function $g$.}
\label{Figure9}
\end{figure}

Using the symmetric formulation \eqref{definition_S_symm}, we show in Figure \ref{Figure11} the squared energy error, which decays with the halving of the parameters $h$ and $\Delta t$ (for each level of discretization the ratio $h / \Delta t\simeq 2$ is maintained). The exit test for stopping the Uzawa algorithm consists in fixing $\epsilon=10^{-4}$, and we set the parameter $\rho=10^4$ in the Uzawa update.\\
\begin{figure}[h!]
\centering
\includegraphics[scale=0.6]{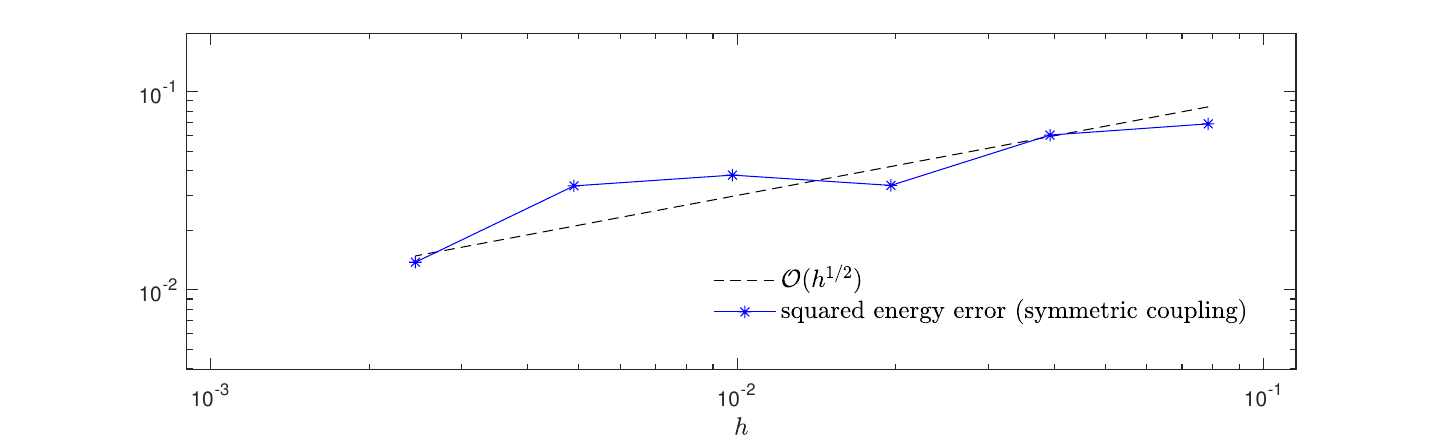}
\caption{Squared error in energy norm for the test on the disk (symmetric formulation).}
\label{Figure11}
\end{figure}
Having fixed $h\simeq 0.02$, Figure \ref{Figure10}(a) shows the time history of the vertical coordinates of the bottom point of $\Omega$, $B$, and its barycenter, $C$, that, before the contact, are identified respectively with the Cartesian coordinates $(0,-0.2)$ and $(0,0)$. It is immediate to observe that during the contact, the bottom point behaves exactly like the gap function $g(t)$, which physically represents a lower bound for the vertical coordinates of the points belonging to $\Omega$, that are dragged up and then released at the end of the contact. Moreover, the approximated displacement at the barycenter, as expected, is characterized by an initial exponential growth and then it becomes linear in time, since after the contact there are no longer external forces applied to the disk $\Omega$. Figure \ref{Figure10}(b) represents instead the global displacement to which the disk is subjected at some time instants: after an initial phase of compression, the points globally move upwards and the deformations are related to the elastic constants that characterize the material of the disk.

\begin{figure}
\centering
\subfloat[]{
\includegraphics[scale=0.6]{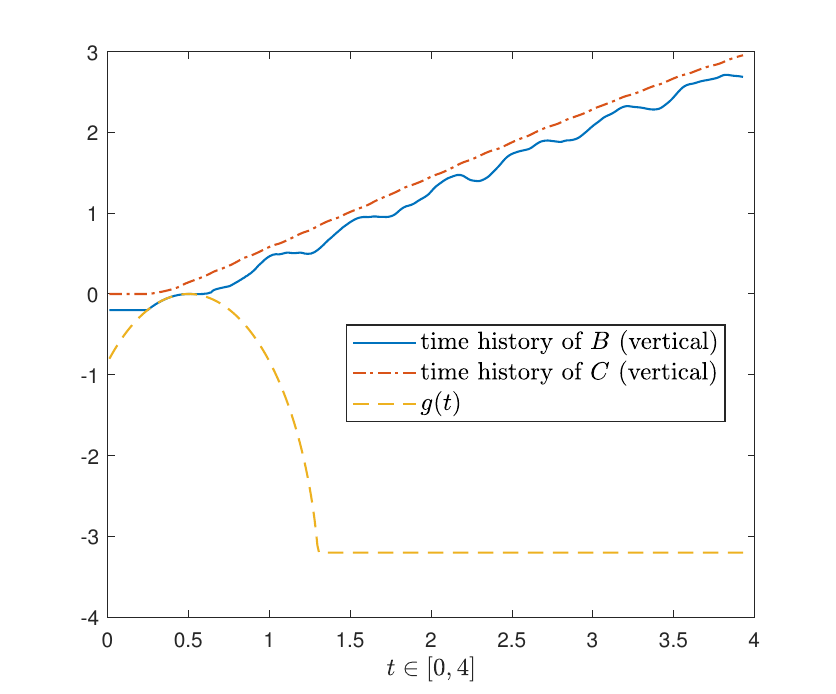}}
\subfloat[]{
\includegraphics[scale=0.6]{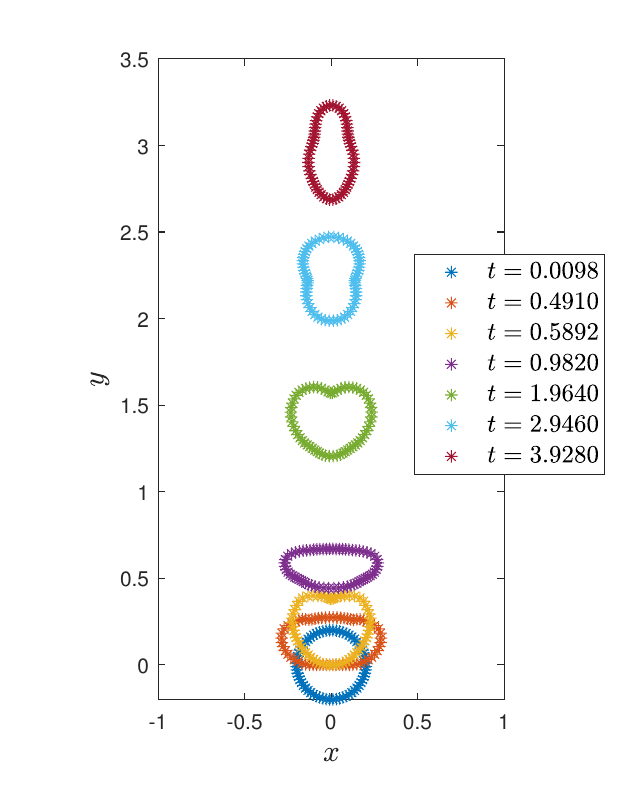}}
\caption{Time history of the bottom and the barycenter points $B$ and $C$ (Figure (a)) and global displacement of the disk $\Omega$ at some time instants (Figure (b)).}
\label{Figure10}
\end{figure}

\section{Conclusions}\label{sec7}
In this work we propose and analyze a Galerkin space-time boundary element method to solve elastodynamic contact problems. Boundary elements provide a natural and efficient formulation, as the contact takes place at the
interface between two bodies.\\
The article presents this approach in the case of a unilateral Signorini contact problem without friction. 
In particular, the algebraic formulation and implementation of an energetic space-time boundary element method are detailed for a mixed formulation of the problem.
Analytically we obtain an a priori error analysis, based on ideas for a scalar variational inequality \cite{contact}. The analysis of the method is crucially
based on an inf–sup condition in space–time. Moreover, the space-time Uzawa iterative algorithm, used to solve the nonlinear problem, is shown to be provably convergent.\\
As a key point, the numerical results indicate stability and convergence for different two-dimensional geometries, including both straight and curved contact boundaries.
 Future work will extend the current approach to two-sided and frictional contact problems, to three-dimensional geometries and space-time adaptive mesh refinements \cite{adaptive}. While the formulation of the proposed approach readily generalizes to Tresca or Coulomb friction \cite{banz, ency, twosided, gwinner}, additional challenges are expected for the nonlinear solver in this case.\\

\section*{Appendix A}\label{appendix A}

This appendix introduces space--time anisotropic Sobolev spaces  as a convenient setting for the analysis of time dependent boundary integral operators. In the case of the wave equation, a detailed exposition may be found in \cite{hd}, and we refer to \cite{ourpaper, Becache1993, Becache1994} for elastodynamics. When $\Gamma$ is an open screen or line segment, so that $\partial\Gamma\neq \emptyset$, we first extend $\Gamma$ to a closed, orientable Lipschitz manifold $\widetilde{\Gamma}$ of dimension $d-1$. 
We recall the definition of Sobolev spaces of supported distributions on $\Gamma$:
$$\widetilde{H}^s(\Gamma) = \{u\in H^s(\widetilde{\Gamma}): \mathrm{supp}\ u \subset {\overline{\Gamma}}\}\ , \quad\ s \in \mathbb{R}\ .$$
The space ${H}^s(\Gamma)$ is defined as the quotient space $ H^s(\widetilde{\Gamma}) / \widetilde{H}^s({\widetilde{\Gamma}\setminus\overline{\Gamma}})$.
We now define a family of Sobolev norms.  Let $\alpha_i$, $i=1, \dots,p$, be a partition of unity subordinate to a covering of $\widetilde{\Gamma}$ by open sets $B_i \subset \mathbb{R}^{d-1}$. If for each $i$, $\varphi_i$ is a diffeomorphism from $B_i$ to the unit cube in $\mathbb{R}^{d-1}$, we define a family of Sobolev norms involving a parameter $\omega \in \mathbb{C}\setminus \{0\}$:
\begin{equation*}
 ||u||_{s,\omega,{\widetilde{\Gamma}}}=\left( \sum_{i=1}^p \int_{{\mathbb{R}^{d-1}}} (|\omega|^2+|\pmb{\xi}|^2)^s|\mathcal{F}\left\{(\alpha_i u)\circ \varphi_i^{-1}\right\}(\pmb{\xi})|^2 d\pmb{\xi} \right)^{\frac{1}{2}}\ .
\end{equation*}
Here, $\mathcal{F}=\mathcal{F}_{\bold{x} \mapsto \pmb{\xi}}$ denotes the Fourier transform $\mathcal{F}\varphi(\pmb{\xi}) = \int e^{-i\bold{x}\cdot\pmb{\xi}} \varphi(\bold{x})\ d\bold{x}$. Different parameters $\omega \in \mathbb{C}\setminus \{0\}$ lead to equivalent norms on $H^s(\Gamma)$, $\|u\|_{s,\omega,\Gamma} = \inf_{v \in \widetilde{H}^s(\widetilde{\Gamma}\setminus\overline{\Gamma})} \ \|u+v\|_{s,\omega,\widetilde{\Gamma}}$ and on $\widetilde{H}^s(\Gamma)$, $\|u\|_{s,\omega,\Gamma, \ast } = \|e_+ u\|_{s,\omega,\widetilde{\Gamma}}$. $e_+$ here denotes the extension by $0$,  which extends a distribution on $\Gamma$ to a distribution on $\widetilde{\Gamma}$.  When a specific, fixed $\omega$ is considered, we write $H^s_\omega(\Gamma)$ for $H^s(\Gamma)$, and $\widetilde{H}^s_\omega(\Gamma)$ for $\widetilde{H}^s(\Gamma)$. 
One observes that $\|u\|_{s,\omega,\Gamma, \ast }\geq \|u\|_{s,\omega,\Gamma}$. 

The space-time anisotropic Sobolev spaces relevant to this article can now be defined as follows:
\begin{definition}
For {$\sigma>0$ and} $r,s \in\mathbb{R}$ we set
\begin{align}
 H^r_\sigma(\mathbb{R}^+,{H}^s(\Gamma))&=\{ u \in \mathcal{D}^{'}_{+}(H^s(\Gamma)): e^{-\sigma t} u \in \mathcal{S}^{'}_{+}(H^s(\Gamma))  \textrm{ and }   ||u||_{r,s,\Gamma} < \infty \}\ , \nonumber \\
 H^r_\sigma(\mathbb{R}^+,\widetilde{H}^s({\Gamma}))&=\{ u \in \mathcal{D}^{'}_{+}(\widetilde{H}^s({\Gamma})): e^{-\sigma t} u \in \mathcal{S}^{'}_{+}(\widetilde{H}^s({\Gamma}))  \textrm{ and }   ||u||_{r,s,\Gamma, \ast} < \infty \}\ .\label{sobdef}
\end{align}
Here, $\mathcal{D}^{'}_{+}({H}^s({\Gamma}))$ denotes the space of all distributions on $\mathbb{R}$ with support in $[0,\infty)$, with values in the real-valued subspace of the Hilbert space ${H}^s({\Gamma})$, and $\mathcal{D}^{'}_{+}(\widetilde{H}^s({\Gamma}))$ is defined in an analogous way. $\mathcal{S}^{'}_{+}({H}^s({\Gamma}))\subset \mathcal{D}^{'}_{+}({H}^s({\Gamma}))$ and $\mathcal{S}^{'}_{+}(\widetilde{H}^s({\Gamma}))\subset \mathcal{D}^{'}_{+}(\widetilde{H}^s({\Gamma}))$ denote the subspaces of tempered distributions.  The Sobolev spaces are equipped with the norms
\begin{align}
\|u\|_{r,s,\sigma}:=\|u\|_{r,s,\Gamma,\sigma}&=\left(\int_{-\infty+i\sigma}^{+\infty+i\sigma}|\omega|^{2r}\ \|\hat{u}(\omega)\|^2_{s,\omega,\Gamma}\ d\omega \right)^{\frac{1}{2}}\ ,\nonumber \\
\|u\|_{r,s,\sigma,\ast} := \|u\|_{r,s,\Gamma,\sigma,\ast}&=\left(\int_{-\infty+i\sigma}^{+\infty+i\sigma}|\omega|^{2r}\ \|\hat{u}(\omega)\|^2_{s,\omega,\Gamma,\ast}\ d\omega \right)^{\frac{1}{2}}\,. \label{sobnormdef}  
\end{align}
\end{definition}
They are Hilbert spaces. For $r=s=0$ they correspond to the weighted $L^2$-space with the scalar product $\langle u,v \rangle_{{\sigma,\Gamma,\mathbb{R}^+}}=\int_0^\infty e^{-2\sigma t} \int_\Gamma u \,v \, d\Gamma_{\bold{x}}\ dt$ {(see \eqref{sigma_product} in Subsection \ref{sec:Boundaryintegralformulation}).} 

The boundary integral operators for the elastodynamic problem obey the following mapping properties between these spaces:
\begin{theorem}\label{mappingproperties}
Let 
$\sigma>0$. Then following operators are continuous for $r\in \R$:
\begin{align*}
& \mathcal{V}:  {H}^{r+1}_\sigma(\R^+, \tilde{H}^{-\frac{1}{2}}(\Gamma))^d\to  {H}^{r}_\sigma(\R^+, {H}^{\frac{1}{2}}(\Gamma))^d \ ,
\\ & \mathcal{K}^\ast:  {H}^{r+1}_\sigma(\R^+, \tilde{H}^{-\frac{1}{2}}(\Gamma))^d\to {H}^{r}_\sigma(\R^+, {H}^{-\frac{1}{2}}(\Gamma))^d \ ,
\\ & \mathcal{K}:  {H}^{r+1}_\sigma(\R^+, \tilde{H}^{\frac{1}{2}}(\Gamma))^d\to {H}^{r}_\sigma(\R^+, {H}^{\frac{1}{2}}(\Gamma))^d \ ,
\\ & \mathcal{W}:  {H}^{r+1}_\sigma(\R^+, \tilde{H}^{\frac{1}{2}}(\Gamma))^d\to {H}^{r}_\sigma(\R^+, {H}^{-\frac{1}{2}}(\Gamma))^d \ .
\end{align*}
\end{theorem}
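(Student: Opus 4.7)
The plan is to prove this result via the standard Bamberger--Ha Duong Laplace-transform technique, as used for the wave equation in \cite{hd} and adapted to elastodynamics in \cite{ourpaper, Becache1993, Becache1994}. The idea is to reduce time-domain mapping properties between the anisotropic spaces $H^r_\sigma(\mathbb{R}^+, \cdot)$ to frequency-domain mapping properties, with explicit polynomial control in the Laplace parameter $\omega \in \mathbb{C}$ on the line $\mathrm{Re}\, \omega = \sigma$, and then recover the $r$-regularity in time through Parseval's identity in the definition \eqref{sobnormdef}.

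Step 1: Laplace transform in time. Because $G$ is a causal retarded fundamental solution, each of the layer operators $\mathcal{V},\mathcal{K},\mathcal{K}^*,\mathcal{W}$ is a temporal convolution whose Laplace transform along $\mathrm{Re}\,\omega = \sigma$ yields the boundary integral operators $\hat{\mathcal{V}}(\omega), \hat{\mathcal{K}}(\omega), \hat{\mathcal{K}}^*(\omega), \hat{\mathcal{W}}(\omega)$ associated with the Helmholtz-type Lamé system $\nabla \cdot \sigma(\mathbf{u}) + \varrho \omega^2 \mathbf{u}=\mathbf{0}$ in $\Omega$. By the Paley--Wiener characterization of $\mathcal{S}'_+$ this reduces the claim to showing, uniformly for $\mathrm{Re}\,\omega = \sigma$, bounds of the form
\begin{align*}
\|\hat{\mathcal{V}}(\omega)\phi\|_{\frac{1}{2},\omega,\Gamma} &\lesssim |\omega|\,\|\phi\|_{-\frac{1}{2},\omega,\Gamma,\ast},\qquad
\|\hat{\mathcal{K}}^*(\omega)\phi\|_{-\frac{1}{2},\omega,\Gamma} \lesssim |\omega|\,\|\phi\|_{-\frac{1}{2},\omega,\Gamma,\ast},\\
\|\hat{\mathcal{K}}(\omega)\psi\|_{\frac{1}{2},\omega,\Gamma} &\lesssim |\omega|\,\|\psi\|_{\frac{1}{2},\omega,\Gamma,\ast},\qquad
\|\hat{\mathcal{W}}(\omega)\psi\|_{-\frac{1}{2},\omega,\Gamma} \lesssim |\omega|\,\|\psi\|_{\frac{1}{2},\omega,\Gamma,\ast},
\end{align*}
each with a constant depending only on $\sigma$. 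Once these estimates are available, substituting $\hat{u}(\omega)$ respectively $\hat{\psi}(\omega)$ and integrating $|\omega|^{2r}\|\cdot\|^2_{\ast,\omega}\, d\omega$ along the line $\mathrm{Re}\,\omega=\sigma$, together with \eqref{sobnormdef}, converts the loss of one power of $|\omega|$ in frequency into the gain of one time derivative, producing exactly the stated continuity between $H^{r+1}_\sigma(\mathbb{R}^+,\cdot)$ and $H^r_\sigma(\mathbb{R}^+,\cdot)$.

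Step 2: Frequency-domain estimates via a transmission argument. For each density I would introduce the potentials $V(\omega)\phi$ and $K(\omega)\psi$ on $\mathbb{R}^d\setminus \Gamma$ and apply the Costabel-type approach. Writing $\mathbf{u}^\pm$ for the restrictions to $\Omega$ and $\mathbb{R}^d\setminus \overline{\Omega}$, the jump relations give $[\gamma \mathbf{u}]=\psi$, $[\mathbf{p}]=-\phi$ for the single/double layer, and Green's first identity for the time-harmonic Lamé system in each subdomain yields, after a Korn-type inequality,
\[
\sigma\bigl(\|\nabla \mathbf{u}\|_{L^2}^2 + |\omega|^2\|\mathbf{u}\|_{L^2}^2\bigr)
\lesssim \bigl|\mathrm{Re}\,\langle \mathbf{p}^\pm, \overline{\omega}\,\gamma \mathbf{u}^\pm\rangle_\Gamma\bigr|,
\]
where the $|\omega|^2$ term arises from the lower-order coefficient and the prefactor $\sigma$ comes from $\mathrm{Re}\,\omega=\sigma>0$. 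Combined with the trace inequality $\|\gamma \mathbf{u}\|_{\frac{1}{2},\omega,\Gamma}\lesssim \|\nabla\mathbf{u}\|_{L^2}+|\omega|\,\|\mathbf{u}\|_{L^2}$, this coercivity produces the sharp operator norms above; the non-standard feature is tracking the powers of $|\omega|$ in Korn's inequality and in the trace estimate, which is exactly where the elastodynamic case differs from the scalar acoustic case of \cite{hd}. The four operators then follow from the Calderón relations on the boundary.

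Step 3: The main obstacle. The essential difficulty is Step 2: obtaining the correct polynomial $|\omega|$-dependence for the \emph{Lamé} system, where the two wave speeds $c_\mathtt{P},c_\mathtt{S}$ and the Korn inequality prevent a direct translation from the scalar Helmholtz case. One has to argue uniformly in $\omega$ along $\mathrm{Re}\,\omega=\sigma$, with constants blowing up as $\sigma\downarrow 0$ but controlled for fixed $\sigma>0$. Once these $\omega$-explicit frequency bounds are established (as carried out in \cite{ourpaper,Becache1993,Becache1994}), Steps 1 and 3 are routine applications of Parseval and the definitions \eqref{sobdef}--\eqref{sobnormdef}, and the four continuity statements follow simultaneously.
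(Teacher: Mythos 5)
Your outline follows exactly the Bamberger--Ha Duong Laplace-transform strategy that the paper itself relies on: Theorem \ref{mappingproperties} is stated without proof, and the authors simply refer to \cite{hd} for the acoustic case and to \cite{ourpaper, Becache1993, Becache1994} for the elastodynamic frequency-domain estimates with explicit powers of $|\omega|$ along $\mathrm{Re}\,\omega=\sigma$. Your Steps 1--3 are a faithful sketch of that argument, and the single power of $|\omega|$ lost in each frequency bound is precisely what converts, via \eqref{sobnormdef}, into the one time derivative lost in each of the four stated continuity properties.
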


\vspace{0.2in}
\noindent \textbf{Acknowledgements}: This work has been partially supported by the University of Parma with the project Fil2020 - Action A1 “Time-domain Energetic BEM for elastodynamic problems, with advanced applications”. Early stages of this research were supported through the ``Oberwolfach Research Fellows'' program in 2020.

\end{document}